\DeclareSymbolFont{rsfscript}{OMS}{rsfs}{m}{b}
\DeclareSymbolFontAlphabet{\mathrsfs}{rsfscript}
\definecolor{shadecolor}{gray}{0.90}
\newtheorem{theo}{Theorem}[section]
\newtheorem{prop}[theo]{Proposition}
\newtheorem{lem}[theo]{Lemma}
\newtheorem{coro}[theo]{Corollary}
\newtheorem{defi}[theo]{Definition}
\def\equat{\refstepcounter{theo}\begin{equation}}
\def\endequat{\end{equation}}
    \def\CM{{\mathbb{C}}}
  \def\gG{{\mathfrak g}}  
  \def\hG{{\mathfrak h}}
\def\LG{{\mathfrak L}}
    \def\ZM{{\mathbb{Z}}}
    \def\CC{{\mathcal{C}}}
  \def\db{{\mathbf d}}
\def\Gb{{\mathbf G}}    
\def\Hb{{\mathbf H}}
\def\Lb{{\mathbf L}}
    \def\OC{{\mathcal{O}}}
\def\Pb{{\mathbf P}}    \def\PC{{\mathcal{P}}}
    \def\XC{{\mathcal{X}}}
    \def\YC{{\mathcal{Y}}}
\def\Zb{{\mathbf Z}}
\def\Irm{{\mathrm{I}}}
\def\Trm{{\mathrm{T}}}
    \def\ZCB{{\boldsymbol{\mathcal{Z}}}}
\def\Ati{{\tilde{A}}}
\def\Qov{{\overline{Q}}}
\def\a{\alpha}
\def\d{\delta}
\def\e{\varepsilon}
\def\th{\theta}
\def\mub{{\boldsymbol{\mu}}}
\DeclareMathOperator{\diag}{{\mathrm{diag}}}
\DeclareMathOperator{\End}{{\mathrm{End}}}
\DeclareMathOperator{\Hom}{{\mathrm{Hom}}}
\DeclareMathOperator{\Id}{{\mathrm{Id}}}
\DeclareMathOperator{\Ker}{{\mathrm{Ker}}}
\DeclareMathOperator{\Res}{{\mathrm{Res}}}
\DeclareMathOperator{\Core}{{\mathrm{Core}}}
\def\to{\rightarrow}
\def\longto{\longrightarrow}
\def\fonction#1#2#3#4#5{\begin{array}{rccc}
{#1} : & {#2} & \longto & {#3}  \\
& {#4} & \longmapsto & {#5} 
\end{array}}
\def\vide{\varnothing}
\def\DS{\displaystyle}
\def\finl{~$\blacksquare$}
\def\lexp#1#2{\kern\scriptspace\vphantom{#2}^{#1}\kern-\scriptspace#2}
\def\le{\hspace{0.1em}\mathop{\leqslant}\nolimits\hspace{0.1em}}
\def\ge{\hspace{0.1em}\mathop{\geqslant}\nolimits\hspace{0.1em}}
\mathchardef\inferieur="321E
\mathchardef\superieur="321F
\def\eqna{\begin{eqnarray*}}
\def\endeqna{\end{eqnarray*}}
\def\aff{\mathrm{aff}}
\long\def\@car#1#2\@nil{#1}
\long\def\@first#1#2{#1}
\long\def\@second#1#2{#2}
\long\def\ifempty#1{\expandafter\ifx\@car#1@\@nil @\@empty
  \expandafter\@first\else\expandafter\@second\fi}
\def\GL{{\mathrm{GL}}}
\DeclareMathOperator{\Reff}{Ref}
\theoremstyle{remark}
\newtheorem{rema}[theo]{Remark}
\newtheorem{exemple}[theo]{Example}
\theoremstyle{plain}
\def\BIL{LR}
\def\GAUCHE{L}
\def\CAR{CAR}
\def\FAM{FAM}
\def\reg{{\mathrm{reg}}}
\def\xyinj{\ar@{^{(}->}}
\def\xysur{\ar@{->>}}
\DeclareMathOperator{\Rep}{{\mathrm{Rep}}}
\def\hlinewd#1{%
\noalign{\ifnum0=`}\fi\hrule \@height #1 %
\futurelet\reserved@a\@xhline}
\newlength\epaisLigne
\newcommand{\longiso}{\stackrel{\sim}{\longrightarrow}}
\def\rdim{\operatorname{\dim^{\rm reg}}}
\def\hlinewd#1{%
\noalign{\ifnum0=`}\fi\hrule \@height #1 %
\futurelet\reserved@a\@xhline}
\def\GL{\operatorname{\Gb\Lb}\nolimits}
\def\hQ{{\widehat Q}}
\begin{document}

\title{Symplectic leaves of \\Calogero-Moser spaces of type $G(\ell,1,n)$}



\author{{\sc Ruslan Maksimau}}

\address{
	Institut Montpelli\'erain Alexander Grothendieck (CNRS: UMR 5149), 
	Universit\'e de Montpellier,
	Case Courrier 051,
	Place Eug\`ene Bataillon,
	34095 MONTPELLIER Cedex,
	FRANCE} 


\date{\today}

\maketitle

\pagestyle{myheadings}
\markboth{\sc R. Maksimau}{\sc Symplectic leaves of Calogero-Moser spaces of type $G(\ell,1,n)$}

\bigskip

\begin{abstract} 
We study symplectic leaves of Calogero-Moser spaces of type $G(\ell,1,n)$. We prove that the normalization of the closure of each symplectic leaf is isomorphic to some Calogero-Moser space. We also give a nice combinatorial parameterization of the symplectic leaves.
\end{abstract}

\section{Introduction}

This preprint is a part of an unfinished paper. This is a natural continuation of \cite{BM}.
We study symplectic leaves of Calogero-Moser spaces of type $G(\ell,1,n)$ under the assumption that the parameter $a$ is nonzero. 

One of the main results of the paper is Theorem \ref{thm:norm-cl-sympl-leaf}. There we prove that the normalization of the closure of each symplectic leaf is isomorphic to some Calogero-Moser space, which confirms a conjecture given in \cite{Bon}. We also give in \S \ref{subs:param-sympl} a nice combinatorial parameterization of the symplectic leaves.

Gwyn Bellamy and Travis Schedler informed me that they also proved Theorem \ref{thm:norm-cl-sympl-leaf} independently. It is expected that this preprint will become a part of a joint paper with Gwyn Bellamy and Travis Schedler.

\bigskip

\section{Combinatorics}

	\bigskip
	
	\subsection{Partitions}
	Assume $\ell\in \ZM_{>0}\cup \{\infty\}$ and $n\in\ZM_{\geqslant 0}$. 
	A \emph{partition} is a tuple $\lambda=(\lambda_1,\lambda_2,\dots,\lambda_r)$ of positive integers 
	(with no fixed length) such that $\lambda_1\geqslant\lambda_2\geqslant\dots\geqslant\lambda_r$, $r\geqslant 0$. 
	Set $|\lambda|=\sum_{i=1}^r \lambda_i$. If $|\lambda|=n$, we say that $\lambda$ is a partition of $n$. 
	
	Denote by $\PC$ (resp. $\PC[n]$) be the set of all partitions (resp. the set of all partitions of $n$). 
	By convention, $\PC[0]$ contains one (empty) partition (it has $r=0$). We will identify partitions with Young diagrams.
	The partition $\lambda$ corresponds to a Young diagram with $r$ lines such that the $i$th line contains $\lambda_i$ boxes. 
	For example the partition $(4,2,1)$ corresponds to the Young diagram
	
	$$
	\yng(4,2,1)
	$$
	
	Let us use the following convention: for $\ell=\infty$ we have $\ZM/\ell\ZM=\ZM$.
	We say that a box $b$ of the Young diagram is \emph{at position $(r,s)$} if it is in the line $r$ and column $s$. 
	The $\ell$-\emph{residue} of the box $b$ is the 
	number $s-r$ modulo $\ell$. (We say that the integer $s-r$ is the $\infty$-residue of the box $b$). Then we obtain a map 
	$$
	\Res_\ell\colon \PC\to \ZM^{\ZM/\ell\ZM},\qquad \lambda\mapsto \Res_\ell(\lambda),
	$$ 
	such that for each $i\in \ZM/\ell\ZM$ the number of boxes with $\ell$-residue $i$ in $\lambda$ is $(\Res_\ell(\lambda))_i$. 
	(In particular, we obtain a map $\Res_\infty\colon \PC\to \ZM^{\ZM}$.) For $\ell=\infty$, we mean that $\ZM^{\ZM/\ell\ZM}=\ZM^\ZM$ is the direct sum (and not the direct product) of $\ZM$ copies of $\ZM$. In other words, our convention is that for an element $\db=(d_i)_{i\in \ZM}\in\ZM^{\ZM}$, only a finite number of integers $d_i$ is nonzero.
	
	\bigskip
	
	\begin{exemple}
		For the partition $\lambda=(4,2,1)$ and $\ell=3$ the $3$-residues of the boxes are
		$$
		\young(0120,20,1)
		$$
		In this case we have $\Res_\ell(\lambda)=(3,2,2)$ because there are three boxes with residue $0$, two boxes with residue 
		$1$ and two boxes with residue $2$.\finl
	\end{exemple}
	
	\bigskip
	
	We say that a box of a Young diagram is \emph{removable} if it has no boxes on the right and on the bottom. In other words, a box $b$ is removable for $\lambda$ if $\lambda\backslash b$ is still a Young diagram. We say that a box $b$ is \emph{addable} for $\lambda$ if $b$ is not a box of $\lambda$ and $\lambda\cup b$ is still a Young diagram.  For $i\in \ZM/\ell\ZM$, we say that a box is $i$-addable or respectively $i$-removable if it is an addable or respectiely removable box with $\ell$-residue $i$.

	For $\lambda,\mu\in\PC$, we write $\mu\leqslant \lambda$ if the Young diagram of $\mu$ can be obtained from 
	the Young diagram of $\lambda$ by removing a sequence of removable boxes.

	\subsection{$\ell$-cores}
	Assume $\ell\in \ZM_{>0}$.
	\begin{defi}
		We say that the partition $\lambda$ is an $\ell$-core if there is no partition $\mu\leqslant\lambda$ such that 
		the Young diagram of $\mu$ differs from the Young diagram of $\lambda$ by $\ell$ boxes with $\ell$ different $\ell$-residues.
	\end{defi}
	
	\bigskip
	
	See \cite{BJV} for more details about the combinatorics of $\ell$-cores. Let $\CC_\ell\subset \PC$ be the set of $\ell$-cores. 
	Set $\CC_\ell[n]=\PC[n]\cap \CC_\ell$.
	
	If a partition $\lambda$ is not an $\ell$-core, then we can get a smaller Young diagram from its Young diagram by 
	removing $\ell$ boxes with different $\ell$-residues. We can repeat this operation again and again until we get an $\ell$-core. 
	It is well-known, that the $\ell$-core that we get is independent of the choice of the boxes that we remove. Then we get an application
	$$
	\Core_\ell\colon \PC\to \CC_\ell.
	$$
	If $\mu=\Core_\ell(\lambda)$, we will say that the partition $\mu$ is {\it the $\ell$-core} of the partition $\lambda$.
	
	\bigskip
	
	\begin{exemple}
		The partition $(4,2,1)$ from the previous example is not a $3$-core because it is possible to remove three bottom boxed. We get 
		$$
		\young(0120)
		$$
		But this is still not a $3$-core because we can remove three more boxes and we get 
		$$
		\young(0)
		$$
		This shows that the partition $(1)$ is the $3$-core of the partition $(4,2,1)$.\finl
	\end{exemple}
	
		\medskip
	Let $\delta_\ell$ denote the constant family $\delta_\ell=(1)_{i \in \ZM/\ell\ZM} \in \ZM^{\ZM/\ell\ZM}$.
	\begin{rema}
		\label{rem:same-lcores}
	Assume that we have $\mu=\Core_\ell(\lambda)$ and $\mu$ is obtained from $\lambda$ by removing $r\ell$ boxes. Then we have $\Res_\ell(\lambda)=\Res_\ell(\mu)+r\delta_\ell$. In partiluar, if we have two partitions $\lambda_1$ and $\lambda_2$ with the same $\ell$-cores and such that $|\lambda_1|=|\lambda_2|$, then they have the same $\ell$-residues. More generally, if two partition $\lambda_1$ and $\lambda_2$ have the same $\ell$-cores then we have $\Res_\ell(\lambda_1)=\Res_\ell(\lambda_2)+r\delta_\ell$, where $r=(|\lambda_1|-|\lambda_2|)/\ell$. 
	
	\end{rema}

	For $\nu\in \CC_\ell$, set $\PC_\nu=\{\lambda\in \PC;~\Core_\ell(\lambda)=\nu\}$ and $\PC_\nu[n]=\PC_\nu\cap\PC[n]$.
	
	\bigskip

	\subsection{Action of the affine Weyl group}\label{sub:action-weyl}
	Assume $\ell\in \ZM_{>0}$. Let $W_\ell^\aff$ denote the affine Weyl group of type $\Ati_{\ell-1}$. For $\ell\geqslant 2$ it is the 
	Coxeter group with associated Coxeter system $(W_\ell^\aff,S_\ell^\aff)$, where 
	$S_\ell^\aff=\{s_i~|~i \in \ZM/\ell\ZM\}$ and the Coxeter graph whose vertices are elements of $\ZM/\ell\ZM$ and we have an edge between $i$ and $i+1$ for each $i\in \ZM/\ell\ZM$. We also extend this notion to the case $\ell=1$ by setting $W_1^\aff=1$.  We denote by $l$ the length function $l\colon W^\aff_\ell\to \ZM_{\geqslant 0}$.
	
	
	
	The non-affine Weyl group $W_\ell$ (isomorphic to the symmetric group $\mathfrak S_\ell$) is a parabolic subgroup of $W_\ell^\aff$ generated by $s_1,\ldots, s_{\ell-1}$ (for $\ell=1$ we mean that $W_1=1$).
	
	\medskip


	Consider the Lie algebra $\gG_\ell=\mathfrak{sl}_\ell(\CM)$ and its affine version  $\widehat{\gG}_\ell=\widehat{\mathfrak{sl}}_\ell(\CM)=\mathfrak{sl}_\ell(\CM)[t,t^{-1}]\oplus \CM \mathbf{1}\oplus \CM\partial$.
	Let $\hG\subset \gG$ be the Cartan subalgebra formed by the diagonal 
	matrices and set $\widehat{\hG}=\hG\oplus \CM \mathbf{1}\oplus \CM\partial$. 
	
	The $\CM$-vector space $\widehat{\hG}^*$ has a basis $(\alpha_0,~ \alpha_1,\ldots,~\alpha_{\ell-1},\Lambda_0)$, where $\alpha_0,~ \alpha_1,\ldots,~\alpha_{\ell-1}$ are the simple roots of $\widehat{\gG}_\ell$ and $\Lambda_0$ is such that $\Lambda_0$ annihilates $\hG$ and $\partial$ and $\Lambda_0(\mathbf{1})=1$. Denote by $R^\aff_\ell$ and $R_\ell$ the affine and the non-affine root lattices respectively (i.e., $R_\ell^\aff$ is the $\ZM$-lattice generated by $\alpha_0,~ \alpha_1,\ldots,~\alpha_{\ell-1}$ and $R_\ell$ is the sublattice generated by $\alpha_1,\ldots,~\alpha_{\ell-1}$.)

	Following~\cite{lusztig}, 
	we define two actions of $W_\ell^\aff$: a non-linear one on $\ZM^{\ZM/\ell\ZM}$, 
	and a linear one on $\CM^{\ZM/\ell\ZM}$. If $\ell=1$, there is nothing to define so we may assume that 
	$\ell \ge 2$. If $\db=(d_i)_{i \in \ZM/\ell\ZM} \in \ZM^{\ZM/\ell\ZM}$ and if 
	$j \in \ZM/\ell\ZM$, we set $s_j(\db)=(d_i')_{i \in \ZM/\ell\ZM}$, where 
	$$d_i'=
	\begin{cases}
	d_i & \text{if $i \neq j$,}\\
	\d_{j0} + d_{i+1}+d_{i-1}-d_i & \text{if $i=j$.}\\
	\end{cases}
	$$
	
	\bigskip
	\begin{rema}
		\label{rem:twisted_action}
		We can identify $\ZM^{\ZM/\ell\ZM}$ with the root lattice $R_\ell^\aff$ by 
		$
		\db\mapsto \sum_{i\in \ZM/\ell\ZM}d_i\alpha_i.
		$
		Under this identification the element $\delta_\ell\in\ZM^{\ZM/\ell\ZM}$ corresponds to the imaginary root of $R_\ell^\aff$ that we also denote by $\delta_\ell$.
		
		Beware, the action considered here is not the usual action of $W_\ell^\aff$ on the root lattice. 
		When we have $w(\db)=\db'$ with respect to the action define above, this corresponds to $w(\Lambda_0-\db)=\Lambda_0-\db'$ for 
		the usual action of $W_\ell^\aff$ on $\widehat \hG^*$.\finl
	\end{rema}

	If $\th=(\th_i)_{i \in \ZM/\ell\ZM} \in \CM^{\ZM/\ell\ZM}$, we set $s_j(\th)=(\th_i')_{i \in \ZM/\ell\ZM}$, where
	$$\th_i'=
	\begin{cases}
	\th_i & \text{if $i \not\in \{j-1,j,j+1\}$,}\\
	\th_j+\th_i & \text{if $i \in \{j-1,j+1\}$,}\\
	-\th_i & \text{if $i=j$.}
	\end{cases}
	$$
	It is readily seen that these definitions on generators extend to an action 
	of the whole group $W_\ell^\aff$. 
	We also define a pairing $\ZM^{\ZM/\ell\ZM} \times \CM^{\ZM/\ell\ZM} \to \CM$, $(\db,\th) \mapsto \db \cdot \th$, 
	where 
	$$\db \cdot \th = \sum_{i \in \ZM/\ell\ZM} d_i \th_i.$$
	Then
	$$
	s_j(\db) \cdot s_j(\th) = (\db \cdot \th) - \d_{j0} \th_0.
	$$

	\bigskip
	

	\begin{rema}
	\label{rem:W-act-lcores}
	\cite[Sec.~3]{BJV} defined an $W^\aff_\ell$-action on $\CC_\ell$. Let us recall this construction. Fix $i\in \ZM/\ell\ZM$ and $\nu\in\CC_\ell$.
	\begin{itemize}
		\item[(1)] Assume that $\nu$ has neither $i$-removable boxes nor $i$-addable boxes, then we have $s_i(\nu)=\nu$.
		\item[(2)] Assume that $\nu$ has no $i$-removable boxes and has at least one $i$-addable box. Then $s_i(\nu)$ is obtained from $\nu$ by addition of all $i$-addable boxes.
		\item[(3)] Assume that $\nu$ has no $i$-addable boxes and has at least one $i$-removable box. Then $s_i(\nu)$ is obtained from $\nu$ by removing of all $i$-removable boxes.
		\item[(4)] The situation when the $\ell$-core $\nu$ has an $i$-addable box and an $i$-removable box at the same time is impossible.
	\end{itemize}
    \end{rema}

	By construction, the map $\Res_\ell\colon \CC_\ell\to \ZM^{\ZM/\ell\ZM}$ is $W^\aff_\ell$-invariant. Moreover, the $\ell$-residue of the empty partition is zero. 
	The stabilizer of the empty partition in $W^\aff_\ell$ is $W_\ell$ and the stabilizer of $0\in\ZM^{\ZM/\ell\ZM}$ in  $W^\aff_\ell$ is also $W_\ell$. This implies that we have $W^\aff_\ell$-invariant bijections
	$$
	\begin{array}{ccccl}
	W^\aff_\ell/W_\ell&\simeq& \CC_\ell&\simeq& W^\aff_\ell\cdot 0\subset \ZM^{\ZM/\ell\ZM}\\
	wW_\ell&\mapsto& w(\emptyset)& \mapsto & w(0)
	\end{array}
	$$
Since $\Res_\ell$ is a $W^\aff_\ell$-invariant map and $\Res_\ell(\emptyset)=0$, then the bijection $\CC_\ell\simeq W^\aff_\ell\cdot 0$ is given by the map $\Res_\ell$. In particular, we see that an element $\ZM^{\ZM/\ell\ZM}$ is a residue of an $\ell$-core if and only if it is in the $W^\aff_\ell$-orbit of $0$.

Moreover, since we have $w(\db+n\delta_\ell)=w(\db)+n\delta_\ell$ and since each $W^\aff_\ell$-orbit in $\ZM^{\ZM/\ell\ZM}$ contains exactly one element of the form $n\delta_\ell$ (see \cite[Lem.~2.8]{BM}), each element $\db\in\ZM^{\ZM/\ell\ZM}$ has a unique presentation in the form 
\begin{equation}
\label{eq:d-via-core-n}
\db=\Res_\ell(\nu)+n\delta_\ell,\qquad\nu\in\CC_\ell, n\in\ZM.
\end{equation}

The following lemma is a reformulation of \cite[Remark~3.2.3]{BJV}.
\begin{lem}
	\label{lem:nu-w-i}	
	Fix $\nu\in\CC_\ell$ and $i\in \ZM/\ell\ZM$. Let $w$ be the unique element of $W^\aff_\ell$ such that $w(\emptyset)=\nu$ and such that $w$ is the shortest element in the coset $wW_\ell\in W^\aff_\ell/W_\ell$. 
	The the situations $(1)$, $(2)$, $(3)$ in Remark \ref{rem:W-act-lcores} are equivalent to the following situations $(1)$, $(2)$, $(3)$ respectively:
	\begin{itemize}
		\item[$(1)$] $s_iw\in wW_\ell$ and $l(s_iw)>l(w)$,
		\item[$(2)$] $s_iw\not\in wW_\ell$ and $l(s_iw)>l(w)$,
		\item[$(3)$] $s_iw\not\in wW_\ell$ and $l(s_iw)<l(w)$.
	\end{itemize}
	
\end{lem}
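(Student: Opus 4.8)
The plan is to reformulate the trichotomy of Remark~\ref{rem:W-act-lcores} as a statement about the weight $w(\Lambda_0)\in\widehat{\hG}^*$, and then to translate that statement into the Coxeter-theoretic conditions of the lemma. Throughout I may assume $\ell\ge 2$, the case $\ell=1$ being vacuous.

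First I would set up a dictionary. By the displayed $W_\ell^\aff$-equivariant bijections, the action on $\CC_\ell$ considered in Remark~\ref{rem:W-act-lcores} is the one for which $wW_\ell\mapsto w(\emptyset)$, and $\Res_\ell$ intertwines it with the non-linear action on $\ZM^{\ZM/\ell\ZM}$. Combining this with Remark~\ref{rem:twisted_action} (which, after identifying $\ZM^{\ZM/\ell\ZM}$ with $R_\ell^\aff\subset\widehat{\hG}^*$, realizes the non-linear action as the usual action conjugated by $x\mapsto\Lambda_0-x$), the map
$$
\nu\ \longmapsto\ \xi_\nu := \Lambda_0-\sum_{i\in\ZM/\ell\ZM}(\Res_\ell(\nu))_i\,\alpha_i
$$
identifies $\CC_\ell$ with the orbit $W_\ell^\aff\cdot\Lambda_0$ for the usual action, and satisfies $\xi_\nu=w(\Lambda_0)$ for every $w$ with $w(\emptyset)=\nu$ (well defined since $W_\ell=\langle s_1,\dots,s_{\ell-1}\rangle$ fixes $\Lambda_0$). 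From now on $w$ denotes the minimal-length representative of the coset; recall that then $w$ sends every positive root of the root subsystem of $W_\ell$ to a positive root of $W_\ell^\aff$.

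Next I would show that cases $(1),(2),(3)$ of Remark~\ref{rem:W-act-lcores} are equivalent, respectively, to $\langle\xi_\nu,\alpha_i^\vee\rangle$ being $=0$, $>0$, $<0$. By equivariance of $\Res_\ell$ together with Remark~\ref{rem:twisted_action}, $\xi_{s_i\nu}=s_i(\xi_\nu)=\xi_\nu-\langle\xi_\nu,\alpha_i^\vee\rangle\alpha_i$; comparing the coefficients of the linearly independent vectors $\alpha_j$ ($j\in\ZM/\ell\ZM$), we see that $\Res_\ell(s_i\nu)$ and $\Res_\ell(\nu)$ agree outside the $i$-th coordinate and differ there by $\langle\xi_\nu,\alpha_i^\vee\rangle$; summing all coordinates and using $|\mu|=\sum_j(\Res_\ell(\mu))_j$ gives $|s_i\nu|-|\nu|=\langle\xi_\nu,\alpha_i^\vee\rangle$. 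On the other hand, cases $(1),(2),(3),(4)$ of Remark~\ref{rem:W-act-lcores} exhaust the possibilities according to the presence of $i$-addable and $i$-removable boxes, case $(4)$ cannot occur for an $\ell$-core, and in the remaining three cases one has $s_i\nu=\nu$, $|s_i\nu|>|\nu|$ and $|s_i\nu|<|\nu|$ respectively; the equivalences follow.

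Finally I would convert $\langle\xi_\nu,\alpha_i^\vee\rangle=\langle w(\Lambda_0),\alpha_i^\vee\rangle=\langle\Lambda_0,w^{-1}(\alpha_i^\vee)\rangle$ into the stated conditions on $w$. Expanding the real root $w^{-1}(\alpha_i)=\sum_j m_j\alpha_j$ and using that the Cartan matrix of type $\Ati_{\ell-1}$ is symmetric, so that $w^{-1}(\alpha_i^\vee)=\sum_j m_j\alpha_j^\vee$, we obtain $\langle\xi_\nu,\alpha_i^\vee\rangle=m_0$ (as $\langle\Lambda_0,\alpha_j^\vee\rangle=\delta_{j0}$). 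Now use two standard facts: all the $m_j$ have the same sign, so $m_0>0\iff w^{-1}(\alpha_i)>0\iff l(s_iw)>l(w)$; and a real root of $W_\ell^\aff$ lies in the root subsystem of $W_\ell$ iff its $\alpha_0$-coefficient vanishes (the real roots with vanishing $\alpha_0$-coefficient being exactly the roots of the finite part), whence $w^{-1}s_iw=s_{w^{-1}(\alpha_i)}\in W_\ell$ — i.e. $s_iw\in wW_\ell$ — iff $m_0=0$. Hence $m_0>0$ gives the conditions of case $(2)$, $m_0<0$ those of case $(3)$, and $m_0=0$ forces $s_iw\in wW_\ell$ together with $l(s_iw)>l(w)$: indeed $w^{-1}(\alpha_i)$ is then a root of $W_\ell$, which must be positive, as otherwise $w$ would send the positive $W_\ell$-root $-w^{-1}(\alpha_i)$ to $-\alpha_i<0$, contradicting the minimality of $w$ in $wW_\ell$; this is case $(1)$. (The remaining possibility $s_iw\in wW_\ell$, $l(s_iw)<l(w)$ thus never occurs, matching case $(4)$ of Remark~\ref{rem:W-act-lcores}.) The only steps needing real care are the correct bookkeeping of the twist $x\mapsto\Lambda_0-x$ of Remark~\ref{rem:twisted_action} and the degenerate case $\ell=2$, where the two neighbours of a node of the affine diagram coincide and the off-diagonal Cartan entry equals $-2$; everything else rests on standard facts about Coxeter groups and Kac--Moody root systems.
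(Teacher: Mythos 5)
Your argument is correct, but it is a genuinely different route from the paper's: the paper gives no proof at all and simply quotes \cite[Remark~3.2.3]{BJV}, whose justification rests on the abacus/combinatorial model of $\ell$-cores, whereas you give a self-contained Kac--Moody/Coxeter-theoretic proof. Your dictionary $\nu\mapsto\xi_\nu=w(\Lambda_0)$ is exactly the right way to exploit Remark~\ref{rem:twisted_action} together with the equivariance of $\Res_\ell$ asserted in \S\ref{sub:action-weyl}, and the reduction of the trichotomy of Remark~\ref{rem:W-act-lcores} to the sign of $\langle\xi_\nu,\alpha_i^\vee\rangle=m_0$ (the $\alpha_0$-coefficient of $w^{-1}(\alpha_i)$) is clean: the three signs are mutually exclusive, the three cases of the Remark are exhaustive for an $\ell$-core, and the three condition sets of Lemma~\ref{lem:nu-w-i} are likewise exclusive, so matching them case by case does establish the equivalences. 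What your approach buys is independence from the abacus combinatorics and a uniform treatment (including $\ell=2$, which you rightly flag); what the paper's citation buys is brevity and consistency with the conventions of \cite{BJV}, which are used elsewhere (e.g.\ in Remark~\ref{rem:W-act-lcores} itself).

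Two small points of hygiene. First, the chain ``$m_0>0\iff w^{-1}(\alpha_i)>0\iff l(s_iw)>l(w)$'' is not literally a chain of equivalences: a positive real root may have $m_0=0$, so only the implication $m_0>0\Rightarrow w^{-1}(\alpha_i)>0$ holds. This is harmless, because in your case analysis you only use the forward implications in each of the three exclusive cases ($m_0>0$, $m_0<0$, $m_0=0$), but the phrasing should be corrected. Second, in the step ``$s_iw\in wW_\ell$ iff $m_0=0$'' the nontrivial direction uses the standard fact that a reflection of $W^\aff_\ell$ lying in the standard parabolic $W_\ell$ is a reflection of $W_\ell$, hence is attached to a root of the finite subsystem; it is worth saying this explicitly rather than leaving it implicit in ``whence''.
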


	\bigskip
	
	For $\db=(d_i)_{i\in \ZM/\ell\ZM}\in \ZM^{\ZM/\ell\ZM}$ we set $|\db|=\sum_{i\in \ZM/\ell\ZM}d_i$.

\subsection{Another presentation of the affine Weyl group}
	Recall that the affine Weyl group has another presentation. We have $W_\ell^\aff=W_\ell\ltimes R_\ell$.
	For each $\alpha\in R_\ell$, denote by $t_\alpha$ the image of $\alpha$ in $W^\aff_\ell$. 
	Each element of $W_\ell^\aff$ can be written in a unique way in the form $w\cdot t_\alpha$, where $w\in W_\ell$ and $\alpha\in R_\ell$. 
	We can also extend the notation $t_\alpha$ to $\alpha\in R_\ell^\aff$ by setting $t_\alpha:=t_{\pi(\alpha)}$ for each $\alpha\in R_\ell^\aff$, where $\pi$ is the following map
	$$
	\pi\colon R_\ell^\aff\to R_\ell^\aff/\ZM\delta_\ell\simeq R_\ell.
	$$
	In the following lemma we identify $\ZM^{\ZM/\ell\ZM}$ with $R_\ell^\aff$.
	
	\bigskip
	
	\begin{lem}
		\label{lem:affWeyl_translation-roots}
		Assume $\alpha\in R_\ell$ and $d\in \ZM^{\ZM/\ell\ZM}$. Then we have $t_\alpha(d) \equiv d-\alpha \mod \ZM\delta_\ell$.
	\end{lem}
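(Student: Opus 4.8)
The plan is to transport the statement through Remark~\ref{rem:twisted_action} to the \emph{usual} linear action of $W^\aff_\ell$ on $\widehat{\hG}^*$, where the effect of a translation on a weight is given by a classical formula, and then to reduce modulo $\ZM\delta_\ell$. Recall from Remark~\ref{rem:twisted_action} that, under the identification $\ZM^{\ZM/\ell\ZM}\simeq R_\ell^\aff\subset\widehat{\hG}^*$, $d\mapsto\sum_i d_i\alpha_i$, the twisted action $w(d)$ is characterized by $\Lambda_0-w(d)=w\cdot(\Lambda_0-d)$, the right-hand side being the usual action of $w\in W^\aff_\ell$ on $\widehat{\hG}^*$. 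In particular $t_\alpha(d)$ is determined by $t_\alpha\cdot(\Lambda_0-d)$. Since $d\in R_\ell^\aff$ is a sum of affine simple roots it pairs trivially with $\mathbf 1$, while $\langle\Lambda_0,\mathbf 1\rangle=1$, so $\Lambda_0-d$ is a weight of level~$1$.

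Next I would invoke the standard description of translations in the affine Weyl group acting on $\widehat{\hG}^*$ (as in Kac's book on infinite–dimensional Lie algebras, Chapter~6): for $\alpha\in R_\ell$ and any $\lambda\in\widehat{\hG}^*$ one has
\[
t_\alpha\cdot\lambda \;\equiv\; \lambda + \langle\lambda,\mathbf 1\rangle\,\alpha \pmod{\CM\delta_\ell}.
\]
Applying this with $\lambda=\Lambda_0-d$, which has level $1$, gives $t_\alpha\cdot(\Lambda_0-d)=\Lambda_0-d+\alpha-c\,\delta_\ell$ for some scalar $c\in\CM$. Combining with the characterization of the twisted action yields $t_\alpha(d)=d-\alpha+c\,\delta_\ell$ inside $\widehat{\hG}^*$. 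But $t_\alpha(d)$ and $d-\alpha$ both lie in the lattice $R_\ell^\aff$, which is free on $\alpha_0,\dots,\alpha_{\ell-1}$ with $\delta_\ell=\alpha_0+\dots+\alpha_{\ell-1}$; hence $c\,\delta_\ell\in R_\ell^\aff$ forces $c\in\ZM$. Therefore $t_\alpha(d)\equiv d-\alpha\pmod{\ZM\delta_\ell}$, which is exactly the asserted congruence (the case $\ell=1$ being trivial since then $W^\aff_1$ and $R_1$ are trivial).

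The only points that require genuine care are bookkeeping ones rather than conceptual obstacles: matching the sign and normalization conventions between the twisted action of Section~\ref{sub:action-weyl} and the classical action of the affine Weyl group on $\widehat{\hG}^*$, and the elementary but essential integrality observation $R_\ell^\aff\cap\CM\delta_\ell=\ZM\delta_\ell$, which is what upgrades the congruence from modulo $\CM\delta_\ell$ to modulo $\ZM\delta_\ell$. If one prefers to avoid citing the explicit translation formula, an equivalent self-contained route is available: since $w(d+n\delta_\ell)=w(d)+n\delta_\ell$ for all $w$, the twisted action descends to an action of $W^\aff_\ell$ on $R_\ell^\aff/\ZM\delta_\ell\simeq R_\ell$; one checks straight from the generator formulas that this quotient action is the usual affine action of $W^\aff_\ell=W_\ell\ltimes R_\ell$ on $R_\ell$ (with $W_\ell$ acting linearly and $t_\alpha$ by $x\mapsto x-\alpha$), so it suffices to verify the formula for $\alpha$ in the generating set $\{\alpha_1,\dots,\alpha_{\ell-1}\}$ of $R_\ell$ using a reduced word for each $t_{\alpha_i}$. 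Either way the computation is short; the substance of the lemma is purely the translation between the two incarnations of the affine Weyl group.
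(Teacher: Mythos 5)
Your argument is correct and is essentially the paper's own proof: the paper simply cites Kac's translation formula (6.5.2) together with Remark~\ref{rem:twisted_action}, which is exactly the route you take (you merely spell out the level-$1$ computation and the integrality step $R_\ell^\aff\cap\CM\delta_\ell=\ZM\delta_\ell$ that the paper leaves implicit).
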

	
	\bigskip
	\begin{proof}
		This statement is a partial case of \cite[(6.5.2)]{Kac} (see also Remark \ref{rem:twisted_action}).
	\end{proof}
	
	\bigskip

	\bigskip

	Consider the $\ZM$-linear map
	$$
	R^\aff_\ell\to \CM^{\ZM/\ell\ZM}, \quad \db\mapsto \overline \db,
	$$
	given by
	$$
	(\overline{\alpha_r})_i=2\delta_{i,r}-\delta_{i,r+1}-\delta_{i,r-1}.
	$$
	The kernel of this map is $\ZM\delta_\ell$. Set 
	$$
	\Sigma(\th)=\sum_{i \in \ZM/\ell\ZM} \th_i.
	$$
	
	\bigskip
	\begin{lem}
		\label{lem:affWeyl_translation-theta}
		For each $\alpha\in R_\ell$ and $\theta\in \CM^{\ZM/\ell\ZM}$, we have $t_\alpha(\theta)=\theta+\Sigma(\th)\overline\alpha$.
	\end{lem}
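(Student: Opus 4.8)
The plan is to realize the linear action of $W_\ell^\aff$ on $\CM^{\ZM/\ell\ZM}$ as the action of $W_\ell^\aff$ on the weight space of $\widehat\gG_\ell$ modulo the imaginary root, and then invoke the translation formula of \cite[(6.5.2)]{Kac}, exactly as in the proof of Lemma \ref{lem:affWeyl_translation-roots}. For $\ell=1$ there is nothing to prove since $R_1=0$, so I assume $\ell\geqslant 2$.

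First I would set up the dictionary. Let $\Lambda_0,\Lambda_1,\dots,\Lambda_{\ell-1}\in\widehat\hG^*$ be the fundamental weights of $\widehat\gG_\ell$, so that $\Lambda_0$ is the one fixed in the text, $\langle\Lambda_i,\alpha_j^\vee\rangle=\delta_{ij}$ and $\langle\Lambda_i,\partial\rangle=0$. Since $\widehat\gG_\ell$ is of type $\Ati_{\ell-1}$, all marks and comarks equal $1$; in particular $\delta_\ell=\alpha_0+\alpha_1+\dots+\alpha_{\ell-1}$ is the imaginary root of Remark \ref{rem:twisted_action}, it is fixed by $W_\ell^\aff$, and each $\Lambda_i$ has level $\Lambda_i(\mathbf 1)=1$. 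Consider the linear map $\th\mapsto\mu_\th=\sum_{i\in\ZM/\ell\ZM}\th_i\Lambda_i$ from $\CM^{\ZM/\ell\ZM}$ to $\widehat\hG^*/\CM\delta_\ell$; as $\{\Lambda_i\}_{i\in\ZM/\ell\ZM}$ is a complement to $\CM\delta_\ell$ in $\widehat\hG^*$, this is an isomorphism. Writing $(a_{ij})$ for the Cartan matrix of type $\Ati_{\ell-1}$, one has $\alpha_j\equiv\sum_i a_{ij}\Lambda_i\pmod{\CM\delta_\ell}$, hence $s_j(\mu_\th)=\mu_\th-\langle\mu_\th,\alpha_j^\vee\rangle\alpha_j\equiv\sum_i(\th_i-\th_j a_{ij})\Lambda_i\pmod{\CM\delta_\ell}$; comparing the $i$-th coordinate with $a_{ij}=2\delta_{ij}-\delta_{i,j+1}-\delta_{i,j-1}$ shows this is exactly $\mu_{s_j(\th)}$, so the map is $W_\ell^\aff$-equivariant (at $\ell=2$ one reads $j-1=j+1$ with multiplicity two). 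Under it $\Sigma(\th)=\sum_i\th_i$ becomes the level $\mu_\th(\mathbf 1)$, and for $\alpha\in R_\ell$ the vector $\overline\alpha\in\CM^{\ZM/\ell\ZM}$ becomes the class of $\alpha$ in $\widehat\hG^*/\CM\delta_\ell$: it suffices to check the latter on the generators $\alpha_1,\dots,\alpha_{\ell-1}$ of $R_\ell$, and there $\alpha_r=\sum_i a_{ir}\Lambda_i$ while $(\overline{\alpha_r})_i=a_{ir}$ by the definition of $\db\mapsto\overline\db$.

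Finally I would apply Kac's formula. For $\alpha$ in the root lattice $R_\ell$ — which is the lattice over which the translations $t_\alpha$ range in the presentation $W_\ell^\aff=W_\ell\ltimes R_\ell$ — and any weight $\lambda$, \cite[(6.5.2)]{Kac} gives $t_\alpha(\lambda)=\lambda+\lambda(\mathbf 1)\,\alpha-(\ast)\,\delta_\ell$ for a scalar $(\ast)$ irrelevant here; taking $\lambda=\mu_\th$ and reducing modulo $\CM\delta_\ell$ yields $t_\alpha(\mu_\th)\equiv\mu_\th+\Sigma(\th)\,\alpha$, which through the dictionary above is precisely the asserted equality $t_\alpha(\th)=\th+\Sigma(\th)\,\overline\alpha$ (now an honest equality in $\CM^{\ZM/\ell\ZM}$, since passing to the quotient by $\CM\delta_\ell$ absorbs the $\delta_\ell$-term). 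As a cross-check one may instead reduce to $\alpha=\alpha_r$ using $t_{\alpha+\beta}=t_\alpha t_\beta$ and $\Sigma(\overline{\alpha_r})=\sum_i a_{ir}=0$, and verify the formula for $t_{\alpha_r}$ by evaluating the defining formulas for the $s_j$-action on a reduced expression of $t_{\alpha_r}$, which is quick in low rank. The step demanding the most care is the bookkeeping of normalizations: which lattice the $t_\alpha$ range over, the direction of $t_\alpha$ (it must be the one already fixed by Remark \ref{rem:twisted_action} and Lemma \ref{lem:affWeyl_translation-roots}, under which $t_\alpha(\db)\equiv\db-\alpha$), and the multiplicity reading at $\ell=2$; once these are pinned down the statement is exactly \cite[(6.5.2)]{Kac}.
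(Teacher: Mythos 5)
Your proof is correct and follows essentially the same route as the paper: it identifies the $W_\ell^\aff$-action on $\CM^{\ZM/\ell\ZM}$ with the standard affine Weyl group action on weights modulo $\CM\delta_\ell$ (the paper phrases this as the usual action on the dual of the span of $\alpha_0,\ldots,\alpha_{\ell-1}$) and then applies \cite[(6.5.2)]{Kac}. The only difference is that you verify the dictionary explicitly on generators (including the correct multiplicity-two reading at $\ell=2$), which the paper asserts without computation.
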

	\bigskip
	\begin{proof}
		The $W^\aff_\ell$-action on $\CM^{\ZM/\ell\ZM}$ defined above coincides with the (usual) action of $W^\aff_\ell$ on the dual of the span of $\alpha_0,\alpha_1,\ldots,\alpha_{\ell-1}$ in $\widehat\hG^*$. The statement follows from \cite[(6.5.2)]{Kac}. 
	\end{proof}

\subsection{$J$-cores} Fix a subset $J\subset \ZM/\ell\ZM$. 

\begin{defi}
	We say that a box of a Young tableau is \emph{$J$-removable} if it is removable and its residue is in $J$. We say that a Young tableau is a \emph{$J$-core} if it has no $J$-removable boxes. Denote by $\CC_J$ the set of all $J$-cores. 
	
	To each partition $\lambda\in \PC$ we can associate a partition $\Core_J(\lambda)\in \CC_J$ obtained from it by removing $J$-removable boxes (probably in several steps). The result $\Core_J(\lambda)$ does not depend on the order of operations.
\end{defi}


\medskip
\begin{lem}
\label{lem:lcore-of-Jcore}
	For each $\mu\in \CC_J$, we have $\Core_\ell(\mu)\in \CC_J$.
\end{lem}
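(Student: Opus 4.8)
The plan is to transport the whole question onto the James abacus, where $\Core_\ell$ becomes "slide every bead down as far as it will go on its runner", and being a $J$-core becomes a family of inclusions between neighbouring runners that sliding-down visibly preserves. First, the bookkeeping: to $\lambda\in\PC$ attach the bead set $B_\lambda=\{\lambda_i-i\mid i\ge 1\}\subset\ZM$ (it contains all sufficiently negative integers and only finitely many $\ge 0$), and for $i\in\ZM/\ell\ZM$ put $S_i(\lambda)=\{p\in B_\lambda\mid p\equiv i\bmod \ell\}$, a cofinite-down subset of the arithmetic progression $i+\ell\ZM$. Let $S_i(\lambda)^{\downarrow}$ be its downward compression (slide all its beads down) and $\gamma_i(\lambda)\in i+\ell\ZM$ the top bead of $S_i(\lambda)^{\downarrow}$. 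Two classical facts (compatible with Remark~\ref{rem:same-lcores} and \cite{BJV}) do all the translating. First: $\lambda$ has a removable box of $\ell$-residue $i$ iff there is $p\in B_\lambda$ with $p\equiv i$ and $p-1\notin B_\lambda$; equivalently $\lambda$ has \emph{no} removable box of residue $i$ iff $S_i(\lambda)-1\subseteq S_{i-1}(\lambda)$ (note $S_i(\lambda)-1\subset (i-1)+\ell\ZM$; when $i=0$ the "$-1$" passes cyclically from runner $0$ to runner $\ell-1$, but on $\ZM$ it is just the honest shift). Second: the abacus of $\Core_\ell(\lambda)$ is obtained from that of $\lambda$ by replacing every $S_i(\lambda)$ with $S_i(\lambda)^{\downarrow}$; in particular $S_i(\Core_\ell(\lambda))=\{p\equiv i\mid p\le\gamma_i(\lambda)\}$ and $\gamma_i(\Core_\ell(\lambda))=\gamma_i(\lambda)$.

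I would also record two trivialities about cofinite-down subsets of an arithmetic progression: compression commutes with the shift $S\mapsto S-1$, so the top of $(S_i(\lambda)-1)^{\downarrow}$ is $\gamma_i(\lambda)-1$; and "top of the compression" is monotone for inclusion, since it is an affine function of the signed bead count $\#\{p\in S:p\ge 0\}-\#\{p\in(i+\ell\ZM):p<0,\ p\notin S\}$, which only increases when $S$ grows. Granting all this, the argument is immediate. Let $\mu\in\CC_J$ and $\nu=\Core_\ell(\mu)$, and fix $i\in J$. Being a $J$-core, $\mu$ satisfies $S_i(\mu)-1\subseteq S_{i-1}(\mu)$; compressing both sides and using the two trivialities gives $\gamma_i(\mu)-1\le\gamma_{i-1}(\mu)$. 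Since $\gamma_j(\nu)=\gamma_j(\mu)$ and $S_j(\nu)=\{p\equiv j\mid p\le\gamma_j(\mu)\}$ for every $j\in\ZM/\ell\ZM$, we obtain
$$S_i(\nu)-1=\{p\equiv i-1\mid p\le\gamma_i(\mu)-1\}\ \subseteq\ \{p\equiv i-1\mid p\le\gamma_{i-1}(\mu)\}=S_{i-1}(\nu),$$
which says exactly that $\nu$ has no removable box of residue $i$. As $i\in J$ was arbitrary, $\nu\in\CC_J$.

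The only genuinely delicate step — and the main obstacle — is pinning down those two classical facts with the residue conventions of this paper: that a "descent" $p\in B_\lambda$ with $p-1\notin B_\lambda$ corresponds to a removable box of $\ell$-residue $p\bmod\ell$ (the cyclic runner-$0$-to-runner-$(\ell-1)$ case included), and that the abacus description of $\Core_\ell$ agrees with the definition here via repeatedly removing $\ell$ boxes of distinct $\ell$-residues. Both are standard, and the second is essentially what is recalled around Remark~\ref{rem:same-lcores}. For $\ell=1$ there is nothing to prove, since $\Core_1(\mu)=\varnothing$.
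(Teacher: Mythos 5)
Your proof is correct, and it takes a genuinely different route from the proof written in the paper. The paper's argument is representation-theoretic: for a $J$-standard $\theta$, the representation $A_\mu$ of \S\ref{subs:C-fixed-points} is simple because $\mu$ is a $J$-core (Lemma \ref{lem:structure-Amu}), so $\Res_\ell(\mu)=\Res_\ell(\nu)+r\delta_\ell$ lies in $E_\theta$, and Lemma \ref{lem:descr-Eth}, combined with the uniqueness of the decomposition \eqref{eq:d-via-core-n}, forces $\nu=\Core_\ell(\mu)$ to be a $J$-core; this is short given the machinery of Section \ref{sec:quiver}, but it imports later results into a purely combinatorial statement (there is no circularity, since neither Lemma \ref{lem:structure-Amu} nor Lemma \ref{lem:descr-Eth} uses the present lemma, yet the forward dependency is a real cost). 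You instead carry out in full the abacus argument that the paper only gestures at in the first sentence of its proof (``quite obvious'' on the abacus, with a pointer to \cite{BJV}): the dictionary between removable boxes of residue $i$ and beads $p\equiv i$ with $p-1$ empty turns the $J$-core condition into the runner inclusions $S_i-1\subseteq S_{i-1}$ for $i\in J$; the description of $\Core_\ell$ as runner-wise downward compression, plus your two observations that compression commutes with the global shift by $-1$ and that the top of the compressed runner is monotone under inclusion (both check out --- the signed bead count you use is preserved by sliding beads down and strictly increasing in the top bead), then transfer these inclusions to the core. What your approach buys is a self-contained, elementary proof living entirely inside Section 2; what it costs is the reliance on the standard identification, which you rightly flag as the one delicate point, between the paper's definition of $\Core_\ell$ (repeatedly removing $\ell$ boxes of pairwise distinct residues) and the rim-hook/abacus description --- exactly the input the paper delegates to \cite{BJV}. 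So your argument is a legitimate and complete expansion of the paper's ``obvious'' alternative rather than a variant of its written proof.
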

\begin{proof}
This statement is quite obvious when we see the partition $\mu$ as an abacus, see for example \cite[\S~ 2]{BJV} for then definition of an abacus.

However we can give another proof based on the representation theory of quivers and the results of \S 3. Fix some $J$-standard $\th\in \CM^{\ZM/\ell\ZM}$. Then, since $\nu$ is a $J$-core, the representaion $A_\mu$ constructed in \S \ref{subs:C-fixed-points} is simple by Lemma \ref{lem:structure-Amu}. Then the dimension vector $\Res_\ell(\mu)$ of this representation is in $E_\th$.

Now, let $\nu$ be the $\ell$-core of $\mu$. Assume that $\nu$ is obtained from $\mu$ by removing $r\ell$ boxes. The we have $\Res_\ell(\mu)=\Res_\ell(\nu)+r\delta_\ell\in E_\th$. Now, Lemma \ref{lem:descr-Eth} implies that $\nu$ is a $J$-core.

\end{proof}

\section{Preliminaries on quiver varieties}\label{sec:quiver}

By an algebraic variety, we mean a reduced scheme of finite type over $\CM$. 

\medskip

\subsection{Quiver varieties}\label{sub:quiver}

Assume $\ell\in \ZM_{>0}\cup\{\infty\}$.

Let $Q_\ell$ denote the cyclic quiver with $\ell$ vertices, defined as follows:
\begin{itemize}
\item[$\bullet$] Vertices: $i \in \ZM/\ell\ZM$ (recall that we use the convention that for $\ell=\infty$ we have $\ZM/\ell\ZM=\ZM$). 

\item[$\bullet$] Arrows: $y_i : i \longto i+1$, $i \in \ZM/\ell\ZM$. 
\end{itemize}
We denote by $\Qov_\ell$ the double quiver of $Q_\ell$ that is, the quiver obtained from $Q_\ell$ by adding an arrow 
$x_i : i+1 \to i$ for all $i \in \ZM/\ell\ZM$.




\medskip

Now, let $\db=(d_i)_{i \in \ZM/\ell\ZM}$ be a family of elements of $\ZM_{\geqslant 0}$. (For $\ell=\infty$ we always assume additionally that $\db$ has a finite number of nonzero components.)

Let $\Rep(\Qov_\ell,\db)$ be the variety of representations 
of $\Qov_\ell$ in the family of vector spaces $(\CM^{d_i})_{i \in \ZM/\ell\ZM}$. More precisely, we have $\Rep(\Qov_\ell,\db)=\bigoplus_{i\in \ZM/\ell\ZM}\Hom(\CM^{d_i},\CM^{d_{i+1}})\oplus \Hom(\CM^{d_{i+1}},\CM^{d_{i}})$. An element of $\Rep(\Qov_\ell,\db)$ is a couple $(X,Y)$ where 
$$
X=(X_i)_{i\in \ZM/\ell\ZM},\quad X_i\in \Hom(\CM^{d_{i+1}}, \CM^{d_{i}}) \qquad Y=(Y_i)_{i\in \ZM/\ell\ZM},\quad Y_i\in \Hom(\CM^{d_i}, \CM^{d_{i+1}}).
$$
We denote by $\Gb\Lb(\db)$ the direct product
$$\Gb\Lb(\db)=\prod_{i \in \ZM/\ell\ZM} \Gb\Lb_{d_i}(\CM),$$
The group $\Gb\Lb(\db)$ acts on $\Rep(\Qov_\ell,\db)$. The orbits 
are the isomorphism classes of representations of $\Qov_\ell$ of dimension vector $\db$. 
We denote by 
$$\fonction{\mu_\db}{\Rep(\Qov_\ell,\db)}{\bigoplus_{i \in \ZM/\ell\ZM}\End(\CM^{d_i})}
{(X_i,Y_i)_{i \in \ZM/\ell\ZM}}{(X_iY_i - Y_{i-1}X_{i-1})_{i \in \ZM/\ell\ZM}}
$$
the corresponding {\it moment map}. Finally, if $\th=(\th_i)_{i \in \ZM/\ell\ZM}$ is a family of complex numbers, 
we denote by $\Irm_\th(\db)$ the family $(\th_i \Id_{\CM^{d_i}})_{i \in \ZM/\ell\ZM}$.
Finally, we set
$$\YC^0_\th=\mu_\db^{-1}(\Irm_\th(\db))\qquad\text{and}\qquad
\XC^0_\th(\db)=\YC^0_\th(\db)/\!\!/\Gb\Lb(\db).$$ 
Note that the variety $\XC^0_\th(\db)$ is not empty only in the case $\db\cdot \th=0$.
Note that $\YC^0_\th(\db)$ is endowed with a $\CM^\times$-action: if $\xi \in \CM^\times$, we set 
$$\xi \cdot (X,Y)=(\xi^{-1}X,\xi Y).$$
This action commutes with the action of $\Gb\Lb(\db)$ and the moment map is constant on 
$\CM^\times$-orbits, so it induces a $\CM^\times$-action on $\XC^0_\th(\db)$. 

\bigskip

Now, we give a framed version $\XC_\th(\db)$ of the variety $\XC^0_\th(\db)$. Let $\hQ_\ell$ be the quiver obtained from $\Qov_\ell$ by adding a new vertex $\infty$ and arrows $0\to \infty$ and $\infty\to 0$.

For each dimension vector $\db$ for the quiver $\Qov$ we consider the dimension vector $\widehat\db$ such that $\widehat\db$ has dimension $1$ at the vertex $\infty$ and the same dimension as $\db$ for other vertices.

Let $\Rep(\hQ_\ell,\widehat\db)$ be the variety of representations 
of $\hQ_\ell$ with dimension vector $\widehat\db$. More precisely, we have 
$$
\Rep(\hQ_\ell,\widehat\db)=\Hom(\CM^{d_0},\CM)\oplus\Hom(\CM,\CM^{d_0})\oplus\bigoplus_{i\in \ZM/\ell\ZM}\Hom(\CM^{d_i},\CM^{d_{i+1}})\oplus \Hom(\CM^{d_{i+1}},\CM^{d_{i}}).
$$ 
An element of $\Rep(\hQ_\ell,\widehat\db)$ is of the form $(X,Y,x,y)$ where 
$$
X=(X_i)_{i\in \ZM/\ell\ZM},\quad X_i\in \Hom(\CM^{d_{i+1}}, \CM^{d_{i}}) \qquad Y=(Y_i)_{i\in \ZM/\ell\ZM},\quad Y_i\in \Hom(\CM^{d_i}, \CM^{d_{i+1}}),
$$
$$ 
x\in \Hom(\CM,\CM^{d_0}),\qquad y\in \Hom(\CM^{d_0},\CM).
$$

The group $\Gb\Lb(\db)$ acts on $\Rep(\hQ_\ell,\widehat\db)$. The orbits 
are the isomorphism classes of representations of $\hQ_\ell$ of dimension vector $\widehat\db$. 
We denote by 
$$\fonction{\widehat\mu_\db}{\Rep(\hQ_\ell,\widehat\db)}{\bigoplus_{i \in \ZM/\ell\ZM} 
\End(\CM^{d_i})}{(X_i,Y_i,x,y)_{i \in \ZM/\ell\ZM}}{(X_iY_i - Y_{i-1}X_{i-1}+\delta_{i,0}xy)_{i \in \ZM/\ell\ZM}}$$
the corresponding {\it moment map}. 
Finally, we set
$$\YC_\th(\db)=\widehat\mu_\db^{-1}(\Irm_\th(\db))\qquad\text{and}\qquad
\XC^0_\th(\db)=\YC^0_\th(\db)/\!\!/\Gb\Lb(\db).$$ 
Note that in the case $\db\cdot \th=0$ we have an obvious isomorphism $\XC_\th(\db)=\XC^0_\th(\db)$. 
Note that $\YC_\th(\db)$ is endowed with a $\CM^\times$-action: if $\xi \in \CM^\times$, we set 
$$\xi \cdot (X,Y,x,y)=(\xi^{-1}X,\xi Y,x,y).$$
This action commutes with the action of $\Gb\Lb(\db)$ and the moment map is constant on 
$\CM^\times$-orbits, so it induces a $\CM^\times$-action on $\XC^0_\th(\db)$. 

\begin{rema}\label{rem:dim-negative}
We extend the definition of $\XC_\th(\db)$ to the case where $\db \in \ZM^{\ZM/\ell\ZM}$ 
by the convention that $\XC_\th(\db)=\vide$ whenever at least one of the $d_i$'s is negative.\finl
\end{rema}

Let $\Rep(\widehat Q_\ell)$ be the category of representations of the quiver $\widehat Q_\ell$. We can see each element of $\Rep(\widehat Q_\ell,\db)$ as an object in $\Rep(\widehat Q_\ell)$ with dimension vector $\widehat \db$.
Now, assume $\ell\in \ZM_{\geqslant 0}$. 

\begin{defi}
\label{def:map-i-inf}
Consider the following map $\iota\colon \Rep(\widehat Q_\infty)\to  \Rep(\widehat Q_\ell)$.

For each finite dimensional representation $(X,Y,x,y)$ of $\widehat Q_\infty$ in the vector space $V=\bigoplus_{j\in \ZM} V_j$ we can associate a representataion $(X',Y',x',y')$ of $\widehat Q_\ell$ in the vector space $V'=\bigoplus_{i\in \ZM/\ell\ZM} V'_i$ where
$$
V'_i = \bigoplus_{\substack{j \in \ZM \\ j \equiv i \mod\ell}} V_j,\qquad X'_i = \bigoplus_{\substack{j \in \ZM \\ j \equiv i \mod\ell}} X_j,\quad  Y'_i = \bigoplus_{\substack{j \in \ZM \\ j \equiv i \mod\ell}} Y_j,
$$
$x'$ is the composition of $x$ with the natural map $V_0\to V'_0$, $y'$ is the composition of $y$ with the natural map $V'_0\to V_0$.
\end{defi}

\subsection{Lusztig's isomorphism}
\label{subs:Lusztig-iso}
We use the $W_\ell^{\rm aff}$-actions on $\ZM^{\ZM/\ell\ZM}$ and $\CM^{\ZM/\ell\ZM}$ defined in Section \ref{sub:action-weyl}.

It is proved in~\cite[Corollary~3.6]{lusztig} that
\equat\label{eq:iso-lusztig}
\XC_{s_j(\th)}(s_j(\db)) \simeq \XC_\th(\db)\qquad \mbox{ if }\theta_j\ne 0.
\endequat
Note that this isomorphism takes into account the convention of Remark~\ref{rem:dim-negative}.

The isomorphism above motivates to consider the following equivalence relation on the set $\ZM^{\ZM/\ell\ZM} \times \CM^{\ZM/\ell\ZM}$. Let $\sim$ be the transitive closure of 
$$
(\db,\theta)\sim (s_i(\db),s_i(\theta)), \qquad \theta_i\ne 0.
$$
The isomorphism \eqref{eq:iso-lusztig} implies that if $(\db,\theta)\sim(\db',\theta')$, then we have an isomorphism of algebraic varieties $\XC_\theta(\db)\simeq \XC_{\theta'}(\db')$.

\begin{rema}
\label{rem:equiv=shortest}
Let $W_\th$ be the stibilizer of $\th$ in $W^\aff_\ell$. Assume that $\th$ is such that $W_\th$ is a parabolic subgroup of $W^\aff_\ell$. 
Then we can describe the set of couples that are equivalent to $(\db,\th)$ in the following way. They are of the form $(w(\db),w(\th))$ where $w$ is the element of $W^\aff_\ell$ such that $w$ is the shortest element in the class $wW_\th\in W^\aff_\ell/W_\th$.
\end{rema}

\bigskip
\subsection{Calogero-Moser space} 
\label{sub:CM}
We fix a $\CM$-vector space $V$ 
		of finite dimension $n$ and a finite subgroup $W$ of $\GL_\CM(V)$. We set
		$$\Reff(W)=\{s \in W~|~\dim_\CM V^s=n-1\}$$
		and we assume that
		$W=\langle \Reff(W) \rangle$.

\medskip

We set $\e : W \to \CM^\times$, $w \mapsto \det(w)$. 
If $s \in \Reff(W)$, we denote by $\a_s^\vee$ and $\a_s$ two elements of $V$ and $V^*$ respectively 
such that $V^s=\Ker(\a_s)$ and $V^{* s}=\Ker(\a_s^\vee)$, where $\a_s^\vee$ is viewed as a linear 
form on $V^*$.

Let us fix a function $c : \Reff(W) \to \CM$ which 
is invariant under conjugacy. We define the $\CM$-algebra $\Hb_c$ to be the quotient 
of the algebra $\Trm(V\oplus V^*)\rtimes W$ (the semi-direct product of the tensor algebra 
$\Trm(V \oplus V^*)$ with the group $W$) 
by the relations 
$$\begin{cases}
[x,x']=[y,y']=0,\\
[x,y]=\DS{\sum_{s\in\Reff(W)}(\e(s)-1)c_s
	\frac{\langle y,\alpha_s\rangle\langle\alpha_s^\vee,x\rangle}{\langle\alpha_s^\vee,\alpha_s\rangle}s},
\end{cases}
$$
for all $x$ ,$x'\in V^*$, $y$, $y'\in V$. The algebra $\Hb_c$ is called the 
{\it rational Cherednik algebra at $t=0$}. 

The first commutation relations imply that 
we have morphisms of algebras $\CM[V] \to \Hb_c$ and $\CM[V^*] \to \Hb_c$.

We denote by $\Zb_c$ the center of $\Hb_c$: it is well-known~\cite[Lemma~3.5]{EG} that 
$\Zb_c$ is an integral domain, which is integrally closed and contains 
$\CM[V]^W$ and $\CM[V^*]^W$ as subalgebras (so it contains $\Pb=\CM[V]^W \otimes \CM[V^*]^W$), 
and which is a free $\Pb$-module of rank $|W|$. We denote by $\ZCB_{\! c}$ the 
algebraic variety whose ring of regular functions $\CM[\ZCB_{\! c}]$ is $\Zb_c$: 
this is the {\it Calogero-Moser space} associated with the datum $(V,W,c)$. 
If necessary, we will write $\ZCB_{\! c}(V,W)$ for $\ZCB_{\! c}$.

\subsection{Quiver varieties vs Calogero-Moser spaces}\label{sub:quiver-cm}

Assume that $n \ge 2$, that $V=\CM^n$ and that $W=G(\ell,1,n)$. Recall that $G(\ell,1,n)$ is the group of monomial 
matrices with coefficients in $\mub_\ell$ (the group of $\ell$-th root of unity in $\CM^\times$).

We fix a primitive $\ell$-th root of unity $\zeta$. We denote by $s$ the permutation matrix corresponding to the 
transposition $(1,2)$ and we set
$$t=\diag(\zeta,1,\dots,1) \in W.$$
Then $s$, $t$, $t^2$,\dots, $t^{\ell-1}$ is a set of representatives of conjugacy 
classes of reflections of $W$. We set for simplification
$$a=c_s\qquad\text{and}\qquad k_j=\frac{1}{\ell} \sum_{i=1}^{\ell-1} \zeta^{-i(j-1)} c_{t^i}$$
for $j \in \ZM/\ell\ZM$. Then
\equat\label{eq:k}
k_0+\cdots + k_{\ell-1} = 0\qquad \text{and}\qquad c_{t^i}=\sum_{j \in \ZM/\ell\ZM} \zeta^{i(j-1)} k_j
\endequat
for $1 \le i \le \ell-1$. Finally, if $i \in \ZM/\ell\ZM$, we set

\equat
\th_i=
\begin{cases}
k_{-i}-k_{1-i} & \text{if $i \neq 0$,}\\
-a +k_0-k_1 & \text{if $i=0$.}\\
\end{cases}
\endequat\label{eq:theta-k}
and $\th=(\th_i)_{i \in \ZM/\ell\ZM}$. 

The following result is proved in~\cite[Theorem~3.10]{gordon quiver}. (Note that our $k_i$ is related with 
Gordon's $H_i$ via $H_i=k_{-i}-k_{1-i}$.)

\bigskip

\begin{prop}\label{prop:quiver-cm}
There is a $\CM^\times$-equivariant 
isomorphism of varieties
$$\ZCB_{\! c} \longiso \XC_\th(n\delta_\ell).$$
\end{prop}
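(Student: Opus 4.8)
The plan is to deduce Proposition~\ref{prop:quiver-cm} from Gordon's theorem \cite[Theorem~3.10]{gordon quiver}, so that the real content is the translation between his conventions (his parameters $H_i$, his orientation and framing of the quiver) and those fixed in \S\ref{sub:quiver} and \S\ref{sub:CM}. I first recall the conceptual skeleton of such an identification and then isolate what must be checked. The starting point is the Etingof--Ginzburg description \cite{EG} of $\ZCB_{\! c}$: at $t=0$ the centre $\Zb_c$ is isomorphic to the spherical subalgebra $e\Hb_c e$ (with $e=\frac1{|W|}\sum_{w\in W}w$), so that $\ZCB_{\! c}=\Spec(e\Hb_c e)$ and the proposition is equivalent to an isomorphism of commutative $\Pb$-algebras $e\Hb_c e\simeq\CM[\XC_\th(n\delta_\ell)]$.

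Next one uses that $W=G(\ell,1,n)=(\ZM/\ell\ZM)\wr\mathfrak{S}_n$ and that $\ZM/\ell\ZM$ embeds into $\SL_2(\CM)$ via $\diag(\zeta,\zeta^{-1})$, acting on $V\oplus V^*\cong(\CM^2)^{\oplus n}$; the McKay graph of this cyclic subgroup is exactly $Q_\ell$ and the relevant affine Lie algebra is $\widehat{\gG}_\ell$. Following Etingof--Ginzburg and Gan--Ginzburg, the spherical subalgebra of the wreath-product Cherednik algebra at $t=0$ is the coordinate ring of the Nakajima quiver variety attached to the framed double quiver $\hQ_\ell$ with dimension vector $\widehat{\db}$ equal to $n$ at each cyclic vertex and $1$ at the framing vertex $\infty$ attached to vertex $0$, reduced at a moment-map value determined by $c$ --- i.e. to $\CM[\XC_{\th'}(n\delta_\ell)]$ for an appropriate $\th'$. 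The framing sitting at vertex $0$ (rather than elsewhere) reflects that $V=\CM^n$ is assembled from copies of the trivial $\ZM/\ell\ZM$-representation. It then remains to see $\th'=\th$: the shift in the moment parameter produced by the commutation term $\sum_{s\in\Reff(W)}(\e(s)-1)c_s(\cdots)s$ decomposes according to the conjugacy classes of reflections, the transposition-type class contributing the parameter $a=c_s$ and the classes of $t^i$ ($1\le i\le\ell-1$) contributing, after the Fourier transform $k_j=\frac1\ell\sum_{i=1}^{\ell-1}\zeta^{-i(j-1)}c_{t^i}$, the individual vertex parameters. Concretely this yields $\th_i=k_{-i}-k_{1-i}$ for $i\neq0$ and the extra $-a$ correction at $i=0$, i.e. $\th_0=-a+k_0-k_1$, which is precisely the definition of $\th$ in \eqref{eq:theta-k}; this is Gordon's computation, carried out with $H_i=k_{-i}-k_{1-i}$, so at worst one redoes it in the present normalization.

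For $\CM^\times$-equivariance, equip $\Hb_c$ with the $\ZM$-grading $\deg V^*=1$, $\deg V=-1$, $\deg W=0$; the defining relations are homogeneous of degree $0$, so this descends to the $\CM^\times$-action on $\ZCB_{\! c}$. Under the identification above the arrows $X=(X_i)$ (the $x_i\colon i+1\to i$) carry $V$-type data and $Y=(Y_i)$ carries $V^*$-type data (or the reverse, depending on the chosen orientation), while the framing maps $x,y$ have weight $0$; hence the grading matches, up to relabelling, the action $\xi\cdot(X,Y,x,y)=(\xi^{-1}X,\xi Y,x,y)$ used to define the $\CM^\times$-structure on $\XC_\th(n\delta_\ell)$, and the isomorphism of the previous paragraph intertwines the two.

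The step I expect to be the main obstacle is the bookkeeping threading through the last two paragraphs: reconciling the orientation of $Q_\ell$, the choice of which of $X,Y$ is "positive", the sign in the moment map $\widehat\mu_\db$, and the placement of the framing at vertex $0$, with Gordon's conventions, so that after the substitution $H_i=k_{-i}-k_{1-i}$ the quiver variety occurring in \cite[Theorem~3.10]{gordon quiver} is \emph{literally} $\XC_\th(n\delta_\ell)$ as defined here and the two $\CM^\times$-actions are intertwined rather than merely abstractly conjugate. None of this is conceptually deep, but it is exactly the sort of normalization check that has to be performed carefully once.
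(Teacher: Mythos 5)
Your proposal follows the same route as the paper: the paper proves this proposition simply by invoking Gordon's result \cite[Theorem~3.10]{gordon quiver}, noting only the parameter dictionary $H_i=k_{-i}-k_{1-i}$, which is exactly the reduction-plus-translation you describe. Your additional remarks on the spherical subalgebra, the McKay picture, and the grading giving the $\CM^\times$-equivariance are consistent background for that same citation-based argument, so the proposal is correct and essentially identical in approach.
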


In the isomorphism above, the parameter $a$ of the variety $\ZCB_{\! c}$ corresponds to $~-(\sum_{i\in \ZM/\ell\ZM}\th_i)$ for $\XC_\th(n\delta_\ell)$. So, we will sometimes use the notation $a=-\Sigma(\theta)=-(\sum_{i\in \ZM/\ell\ZM}\th_i)$ when we speak about an arbitrary quiver variety $\XC_\th(\db)$. Note also that $a$ is invariant under the transformation of the parameter $\th\mapsto s_j(\th)$. In this paper, we will often assume $a\ne 0$.

\begin{rema}
	All statements in \S \ref{sub:quiver-cm} make also sense for $n=1$ with the following modifications. We have no transposition $s$, so we have no parameter $a$. On the other hand, for $n=1$, the variety $\XC_\th(n\delta_\ell)$ does not depend on $\theta_0$. Proposition \ref{prop:quiver-cm} is true for an arbitrary choice of $a$ in \eqref{eq:theta-k}.
	
	We can also use the convention that for $n=0$ the Calogero-Moser space is a point. Then Proposition \ref{prop:quiver-cm} still holds.
\end{rema}

Recall also from~\cite[\S{11}]{EG} the following result, which follows from Proposition \ref{prop:quiver-cm}.

\bigskip

\begin{lem}\label{lem:dim-CM}
	If $n \ge 0$, then $\XC_\th(n\delta_\ell)$ is normal and of dimension $2n$.
\end{lem}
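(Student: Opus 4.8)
The plan is to deduce everything from the identification of the quiver variety with a Calogero--Moser space provided by Proposition~\ref{prop:quiver-cm}, combined with the structure results on $\Zb_c$ recalled in \S\ref{sub:CM} (which are exactly the content of \cite[\S11]{EG}). So the proof will really be a reduction followed by a citation.

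First I would dispose of the two degenerate cases. For $n=0$ the dimension vector $n\delta_\ell$ is zero, so $\XC_\th(0)$ is a single point: it is normal and of dimension $0=2\cdot 0$ (this also matches the convention in the Remark after Proposition~\ref{prop:quiver-cm} that the Calogero--Moser space of the trivial group is a point). For $n=1$, by that same Remark the variety $\XC_\th(\delta_\ell)$ does not depend on $\th_0$ and is isomorphic to the Calogero--Moser space $\ZCB_{\!c}(\CM,\mub_\ell)$ for an arbitrary choice of the remaining parameters, so this case too is subsumed by the case below.

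For $n\ge 1$ the next step is to observe that \emph{every} $\th\in\CM^{\ZM/\ell\ZM}$ arises, via formulas \eqref{eq:k} and \eqref{eq:theta-k}, from some conjugacy-invariant function $c\colon\Reff(G(\ell,1,n))\to\CM$. Indeed, given $\th$, set $a=-\Sigma(\th)$; then the equations $\th_i=k_{-i}-k_{1-i}$ for $i\ne 0$ together with $k_0+\cdots+k_{\ell-1}=0$ have a unique solution $(k_j)_{j\in\ZM/\ell\ZM}$ (a ``discrete primitive'' of $\th$), and summing all the $\th_i$ shows that the prescribed value $\th_0=-a+k_0-k_1$ is automatically produced; one then recovers the $c_{t^i}$ from the $k_j$ via \eqref{eq:k}. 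Hence the parameter map $c\mapsto\th$ is a bijection, and Proposition~\ref{prop:quiver-cm} furnishes a $\CM^\times$-equivariant isomorphism $\XC_\th(n\delta_\ell)\simeq\ZCB_{\!c}$ with $V=\CM^n$ and $W=G(\ell,1,n)$.

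It then remains to read off the conclusion from \S\ref{sub:CM}: $\Zb_c=\CM[\ZCB_{\!c}]$ is an integral domain, so $\ZCB_{\!c}$ is irreducible; it is integrally closed, so $\ZCB_{\!c}$ is normal; and it is a free module of rank $|W|$ over $\Pb=\CM[V]^W\otimes\CM[V^*]^W$. Since $G(\ell,1,n)$ is a complex reflection group, $\CM[V]^W$ and $\CM[V^*]^W$ are polynomial rings in $n$ variables each, so $\Pb$ is a polynomial ring in $2n$ variables; as a finite free ring extension preserves Krull dimension, $\dim\ZCB_{\!c}=\dim\Pb=2n$. I do not expect a real obstacle here: the only slightly delicate point is the surjectivity of $c\mapsto\th$, which is handled by the discrete-primitive computation above, and once that is in place the statement is a direct consequence of Proposition~\ref{prop:quiver-cm} and \cite[\S11]{EG}.
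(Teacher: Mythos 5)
Your argument is correct and follows essentially the same route as the paper, which simply records this lemma as a consequence of Proposition~\ref{prop:quiver-cm} together with the results of \cite[\S 11]{EG} (and the structure of $\Zb_c$ recalled in \S\ref{sub:CM}). Your added details --- the surjectivity of the parameter map $c\mapsto\th$ via the ``discrete primitive'' computation, the degenerate cases $n=0,1$, and the dimension count via the finite free extension $\Pb\subset\Zb_c$ --- are exactly the verifications the paper leaves implicit, and they are all sound.
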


\subsection{Simple representations in $\Rep_{\th}(\Qov_\ell)$}

From now on we assume $a\ne 0$.

Denote by $\Rep_{\theta}(\Qov_\ell)$ the additive category of representations $(X,Y)$ of $\Qov_\ell$ satisfying the moment map relations $\mu_\db(X,Y)=\Irm_\th(\db)$, where $\db$ is the dimension vector of the representation $(X,Y)$. In this section we give an explicit description of the set $\Sigma_\theta$ of dimension vectors of simple representations in $\Rep_{\theta}(\Qov_\ell)$. This description is done in much more generality in \cite{CB}. In this section, we precise how this description looks like in our particular case: the cyclic quiver and $a\ne 0$.

By \cite[Theorem 5.8]{CB} there are two types of indecomposable representations in $\Rep_\th(\Qov_\ell)$:
\begin{itemize}
\item[\textbullet] representations whose dimension vectors are positive roots,
\item[\textbullet] representations whose dimension vectors are of the form $r\delta_\ell$ for $r>0$.
\end{itemize}

Since we assume $a\ne 0$, the second situation is not possible. Now, let us give a precise description of the dimension vectors of simple representations. 

Let $R^+\subset\ZM^{\ZM/\ell\ZM}$ be the set of positive real roots. Set $R^+_\th=\{\db\in R^+;\db\cdot \theta=0\}$.
The following proposition is the special case of \cite[Theorem~1.2]{CB}.
\begin{prop}
The dimension vectors of simple representations in $\Rep_{\th}(\Qov_\ell)$ are exactly the elements of $R^+_\th$ that are not presented as sums of (two or more) elements of $R^+_\th$.
\end{prop}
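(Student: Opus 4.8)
The plan is to deduce the proposition directly from Crawley-Boevey's classification of the dimension vectors of simple modules over deformed preprojective algebras. By construction, $\Rep_{\th}(\Qov_\ell)$ is the category of finite-dimensional modules over the deformed preprojective algebra $\Pi^\th$ of the cyclic quiver $Q_\ell$: the moment map relation $\mu_\db(X,Y)=\Irm_\th(\db)$ is exactly the defining relation of $\Pi^\th$ (any discrepancy of sign between Crawley-Boevey's parameter $\lambda$ and our $\th$ is immaterial, since all that enters below is the condition $\db\cdot\th=0$, which is insensitive to the sign of $\th$, together with the quadratic data of the root system, which does not involve $\th$ at all). A subrepresentation of an object of $\Rep_{\th}(\Qov_\ell)$ automatically satisfies the moment map relation, so a representation is simple in $\Rep_{\th}(\Qov_\ell)$ if and only if it is a simple $\Pi^\th$-module. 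Thus I would invoke \cite[Theorem~1.2]{CB}: $\Sigma_\th$ consists of the positive roots $\alpha$ of $Q_\ell$ with $\alpha\cdot\th=0$ such that, for every decomposition $\alpha=\beta_1+\dots+\beta_k$ with $k\ge 2$ into positive roots $\beta_i$ with $\beta_i\cdot\th=0$, one has $p(\alpha)>p(\beta_1)+\dots+p(\beta_k)$; here $p(\beta)=1-\langle\beta,\beta\rangle$ is the usual defect, with $p(\beta)=0$ when $\beta$ is a real root and $p(\beta)\ge 1$ when $\beta$ is imaginary, i.e.\ a positive integer multiple of $\delta_\ell$.

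The key point --- and the only place where the hypothesis $a\ne 0$ is used --- is that no imaginary root can appear in this criterion. Indeed $\delta_\ell\cdot\th=\Sigma(\th)=-a\ne 0$, hence $(r\delta_\ell)\cdot\th=-ra\ne 0$ for every $r\ge 1$; since the positive imaginary roots of the affine root system of type $\Ati_{\ell-1}$ are precisely the $r\delta_\ell$ with $r\ge 1$, every positive root $\beta$ with $\beta\cdot\th=0$ is a \emph{real} root. In other words, the set of positive roots orthogonal to $\th$ is exactly $R^+_\th$, the function $p$ vanishes identically on it, and moreover $\Sigma_\th\subseteq R^+_\th$. This is the same observation that was used above, via \cite[Theorem~5.8]{CB}, to rule out the indecomposable representations of dimension vector $r\delta_\ell$.

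With this in hand the numerical criterion collapses. Let $\alpha\in R^+_\th$. If $\alpha=\beta_1+\dots+\beta_k$ with $k\ge 2$ and all $\beta_i\in R^+_\th$, then $p(\alpha)=0=p(\beta_1)+\dots+p(\beta_k)$, so the strict inequality fails and $\alpha\notin\Sigma_\th$. Conversely, if $\alpha$ admits no such decomposition, there is no admissible decomposition at all, the criterion of \cite[Theorem~1.2]{CB} holds vacuously, and $\alpha\in\Sigma_\th$. Combined with $\Sigma_\th\subseteq R^+_\th$, this shows that $\Sigma_\th$ is exactly the set of elements of $R^+_\th$ that cannot be written as a sum of two or more elements of $R^+_\th$, which is the assertion.

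I do not expect a serious obstacle: the substance is entirely contained in \cite[Theorem~1.2]{CB}, and the role of the assumption $a\ne 0$ is precisely to eliminate the imaginary roots and thereby trivialize the defect inequality. The only items requiring a little care are (i) matching conventions so that $\Rep_{\th}(\Qov_\ell)$ is identified with the module category of $\Pi^\th$ with the correct parameter, and (ii) recalling the two standard facts about the affine root system of type $\Ati_{\ell-1}$ used above (its positive imaginary roots are the $r\delta_\ell$, and $p$ vanishes exactly on the real roots). The degenerate case $\ell=1$, if it must be covered, is handled by inspection: every positive root of the one-loop quiver is a multiple of $\delta_1=(1)$, and $\delta_1\cdot\th=-a\ne 0$, so $R^+_\th=\vide$ and $\Rep_{\th}(\Qov_1)$ has no simple object, in agreement with the statement.
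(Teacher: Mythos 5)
Your argument is correct and follows the same route as the paper, which simply states the proposition as a special case of \cite[Theorem~1.2]{CB}; you merely spell out the detail left implicit there, namely that $a\ne 0$ forces every positive root orthogonal to $\th$ to be real, so the defect inequality $p(\alpha)>\sum p(\beta_i)$ degenerates to the nonexistence of any decomposition into elements of $R^+_\th$. No gap to report.
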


\begin{coro}
\label{coro:unique-ss}
For each dimesnion vector $\db\in \ZM_{\geqslant 0}^{\ZM/\ell\ZM}$, there exists at most one (up to isomorphism) semisimple representation in $\Rep_{\th}(\Qov_\ell)$ with dimension vector $\db$.
\end{coro}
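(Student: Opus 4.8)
The plan is to reduce the statement to the assertion that $\Sigma_\theta$ is $\ZM$-linearly independent in $\ZM^{\ZM/\ell\ZM}$, and then to prove that independence by reinterpreting $\overline{R^+_\th}$ as the set of positive roots of a partial order on $\{1,\dots,\ell\}$ and checking that its Hasse diagram is a forest. First I would observe that, by Krull--Schmidt, a semisimple object of $\Rep_\th(\Qov_\ell)$ is a direct sum of simple objects, uniquely up to isomorphism and reordering; by the Proposition above (which, via \cite[Theorem~5.8]{CB}, uses the standing hypothesis $a\neq 0$) the dimension vector of every simple object lies in $\Sigma_\theta\subset R^+_\th$; and it is known (see \cite{CB}) that a positive real root is the dimension vector of at most one simple object of $\Rep_\th(\Qov_\ell)$ — indeed such a simple object is rigid. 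Hence a semisimple representation of dimension vector $\db$ is completely determined by the multiset of the dimension vectors of its simple summands, so the corollary is equivalent to the linear independence of $\Sigma_\theta$: a nontrivial integral relation among the elements of $\Sigma_\theta$ would, after separating positive and negative coefficients, give two distinct $\ZM_{\ge 0}$-combinations of $\Sigma_\theta$ with equal value, hence two non-isomorphic semisimple representations of the same dimension vector.

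Next I would pass from the affine to the finite root system. Identifying $\ZM^{\ZM/\ell\ZM}$ with $R^\aff_\ell$ as in Remark~\ref{rem:twisted_action}, every positive real root has a unique expression $\db=\gamma+n\delta_\ell$ with $\gamma$ a finite root in $R_\ell$ and $n\in\ZM_{\ge 0}$ the coefficient of $\alpha_0$. Since $\delta_\ell\cdot\th=\Sigma(\th)=-a\neq 0$ and $\db\cdot\th=0$ for every $\db\in R^+_\th$, the relation $\gamma\cdot\th-na=0$ shows that $n$ is a fixed \emph{linear} function of $\gamma$. Consequently the map $\db\mapsto\gamma$ is injective on $R^+_\th$ and carries sums to sums: $\db$ is a sum of $r$ elements of $R^+_\th$ if and only if $\gamma$ is a sum of $r$ elements of the image $\overline{R^+_\th}\subset R_\ell$. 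Therefore $\Sigma_\theta$ is linearly independent if and only if $\overline{\Sigma_\theta}$ is, and $\overline{\Sigma_\theta}$ is precisely the set of elements of $\overline{R^+_\th}$ that cannot be written as a sum of two elements of $\overline{R^+_\th}$.

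Finally I would analyse $\overline{R^+_\th}$ combinatorially. Writing finite roots as $e_p-e_q$ with $1\le p\neq q\le\ell$, the linear function above has the form $n(e_p-e_q)=f_p-f_q$ for suitable numbers $f_1,\dots,f_\ell$ built from $\th$ and $a$; one then checks that $\overline{R^+_\th}=\{e_p-e_q:\ f_p-f_q\in\ZM_{\ge 1}\}\cup\{e_p-e_q:\ p<q,\ f_p=f_q\}$ and that this set is closed under addition of roots. Hence $p\prec q:\Longleftrightarrow e_p-e_q\in\overline{R^+_\th}$ defines a strict partial order on $\{1,\dots,\ell\}$, $\overline{R^+_\th}$ is its set of positive roots, and $\overline{\Sigma_\theta}$ is exactly the family of root vectors $e_p-e_q$ attached to the covering relations of $\prec$, i.e. to the edges of the Hasse diagram of $\prec$. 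It then suffices to show this Hasse diagram is a forest, for then its edge vectors $e_p-e_q$ are linearly independent. Grouping the indices according to the value of $f$ (so that $i,j$ lie in the same block of "layers" when $f_i-f_j\in\ZM$), one sees that $\prec$ splits as a disjoint union, over these blocks, of layered posets in which each layer (a set of equal $f$-value) is totally ordered and consecutive layers are all-to-all comparable; a short inspection shows that the Hasse diagram of such a layered poset is a single path, so the Hasse diagram of $\prec$ is a disjoint union of paths.

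The main obstacle is this last step: pinning down the exact description of $\overline{R^+_\th}$ and then exploiting the integrality constraint $f_p-f_q\in\ZM$ — which is exactly where the hypothesis $a\neq 0$ enters, through $\delta_\ell\cdot\th\neq 0$ — to conclude that the Hasse diagram of $\prec$ carries no cycle. Everything preceding it (the Krull--Schmidt reduction, the rigidity of simples of real-root dimension, and the passage to the finite root lattice) is formal.
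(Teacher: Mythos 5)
Your proposal is correct, but its second half is genuinely different from the paper's argument. You and the paper share the first step: since $a\neq 0$, every simple object of $\Rep_\th(\Qov_\ell)$ has for dimension vector a positive real root in $\Sigma_\th$, and for a real root there is at most one simple up to isomorphism (Crawley-Boevey), so a semisimple representation is determined by the multiset of dimension vectors of its simple summands. From there the paper does \emph{not} prove linear independence of $\Sigma_\th$; it argues that $\db$ has only finitely many decompositions into elements of $\Sigma_\th$, hence $\XC^0_\th(\db)$ has finitely many points, and then invokes the irreducibility of $\XC^0_\th(\db)$ from [CB2, Cor.~1.4] to conclude it is at most a single point (and it then \emph{deduces} the linear independence of $\Sigma_\th$ as the next corollary). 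You reverse this logic: you prove the independence of $\Sigma_\th$ directly by projecting $R^+_\th$ modulo $\ZM\delta_\ell$ (injectively, since $\delta_\ell\cdot\th=-a\neq 0$ pins down the $\alpha_0$-coefficient as a linear function of the finite part), describing the image as the positive roots $e_p-e_q$ of a partial order on $\{1,\dots,\ell\}$, and observing that the indecomposable elements are the Hasse edges, which form a forest. Your combinatorial endgame does go through -- in fact each block of indices with $f$-differences in $\ZM$ is \emph{totally} ordered, so each Hasse component is literally a path -- and there is no circularity, since you never use the corollary itself. What each route buys: the paper's proof is shorter and quiver-independent, resting entirely on general results of Crawley-Boevey, while yours is more elementary and self-contained, exploits the specific cyclic-quiver root combinatorics and the hypothesis $a\neq 0$ explicitly, and gives finer information (an explicit chain/forest description of $\Sigma_\th$, from which the paper's subsequent corollary on linear independence is immediate rather than derived). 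Two minor points to record if you write this up: treat $\ell=1$ separately (no finite roots; $\Sigma_\th=\varnothing$ since $a\neq 0$), and note that a zero sum of elements of the projected set is impossible because all $f$-differences are $\geqslant 0$ and ties are broken by the index order, which is what rules out cycles in your decomposition-into-chains argument.
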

\begin{proof}
The statement is equivalent to the fact that the variety $\XC^0_\th(\db)$ contains at most one point. 

First, assume $\db\in \Sigma_\th$.  Then, since $a\ne 0$, $\db$ is a positive root. This implies that there is exactly one (up to isomorphism) simple representation in $\Rep_{\th}(\Qov_\ell)$ with dimension vector $\db$ (see the introduction in \cite{CB}).

Now, consider an arbitraty $\db\in\ZM_{\geqslant 0}^{\ZM/\ell\ZM}$. Then there is a finite number of possibilities to decompose $\db$ in a sum of elements of $\Sigma_\th$. This implies that $\XC^0_\th(\db)$ has a finite number of points. The variety $\XC^0_\th(\db)$ is irreducible if it is non-empty by \cite[Cor.~1.4]{CB2}. So, the variety $\XC^0_\th(\db)$ contains at most one point. 

\end{proof}

\begin{coro}
The elements of $\Sigma_\th$ are $\ZM$-linearly independent.
\end{coro}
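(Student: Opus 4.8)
The plan is to deduce this quickly from Corollary~\ref{coro:unique-ss} (uniqueness of the semisimple representation with a given dimension vector) together with the fact that a semisimple object decomposes into simple summands in an essentially unique way.

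Suppose, for a contradiction, that there is a nontrivial $\ZM$-linear relation $\sum_k c_k\,\db_k = 0$ with the $\db_k\in\Sigma_\th$ pairwise distinct and the $c_k\in\ZM$ not all zero. First I would separate the terms according to the sign of the coefficients: setting $I=\{k : c_k>0\}$ and $J=\{k : c_k<0\}$, the relation becomes
$$\db:=\sum_{k\in I} c_k\,\db_k \;=\; \sum_{k\in J} (-c_k)\,\db_k .$$
Since every element of $\Sigma_\th$ is the dimension vector of an actual representation, hence lies in $\ZM_{\geqslant 0}^{\ZM/\ell\ZM}$, the common value $\db$ again lies in $\ZM_{\geqslant 0}^{\ZM/\ell\ZM}$; moreover $\db\cdot\th=0$ because each $\db_k\in R^+_\th$.

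Next, for each $k$ let $S_{\db_k}$ be the unique simple object of $\Rep_\th(\Qov_\ell)$ with dimension vector $\db_k$ (unique since $\db_k\in\Sigma_\th$ and $a\ne 0$, as in the proof of Corollary~\ref{coro:unique-ss}). A finite direct sum of objects of $\Rep_\th(\Qov_\ell)$ is again an object of $\Rep_\th(\Qov_\ell)$, because the moment-map condition is compatible with direct sums and $\Irm_\th$ is additive in the dimension vector; hence
$$M:=\bigoplus_{k\in I} S_{\db_k}^{\oplus c_k}\qquad\text{and}\qquad N:=\bigoplus_{k\in J} S_{\db_k}^{\oplus(-c_k)}$$
are two semisimple objects of $\Rep_\th(\Qov_\ell)$, both of dimension vector $\db$. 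Corollary~\ref{coro:unique-ss} then gives $M\simeq N$. Comparing, for each simple object, its multiplicity on the two sides (a semisimple object over $\CM$ is determined by these multiplicities, since $\End$ of a simple is $\CM$), and using that $I\cap J=\vide$ with the $\db_k$ distinct, I conclude $I=J=\vide$, i.e. all $c_k=0$ — a contradiction.

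The whole argument is short once Corollary~\ref{coro:unique-ss} is at hand; the only points needing (minor) care are the rearrangement of an arbitrary $\ZM$-linear relation into an equality between two genuine nonnegative dimension vectors, and the remark that both sides of that equality are realised by honest semisimple objects of $\Rep_\th(\Qov_\ell)$, so that the uniqueness statement applies. I do not anticipate any serious obstacle.
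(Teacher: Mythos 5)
Your argument is correct and is essentially the paper's own proof made explicit: the paper also deduces the statement from Corollary~\ref{coro:unique-ss}, noting that uniqueness of the semisimple representation with a given dimension vector forces the decomposition of any $\db$ as a sum of elements of $\Sigma_\th$ to be unique, which is exactly what your positive/negative splitting and multiplicity comparison spell out. No gap; the extra care about realising both sides by genuine semisimple objects and about disjointness of the simple summands is exactly the right (routine) justification.
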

\begin{proof}
Since $\XC^0_\th(\db)$ contains at most one point, there is at most one way (up to permutation) to decompose $\db$ in a sum of elements of $\Sigma_\th$.
\end{proof}

Denote by $\Sigma\Sigma_\th$ the set of sums of element of $\Sigma_\th$ (we also allow an empty sum, so we assume $0\in \Sigma\Sigma_\th$). In other words, the set $\Sigma\Sigma_\th$ is the set of all dimension vectors $\db$ such that there exists a representation in $\Rep_\th(\Qov_\ell)$ of dimension vector $\db$. For each $\db\in \Sigma\Sigma_\th$, denote by $L(\db)$ the unique semisimple representataion in $\Rep_\th(\Qov_\ell)$.

\subsection{Symplectic leaves}
\label{subs:symp-leaves}

Denote by $\Rep_{\theta}(\hQ_\ell)$ the category of representations $(X,Y,x,y)$ of $\hQ$ whose dimension vector is of the form $\widehat\db$ for some $\db\in \ZM^{\ZM/\ell\ZM}$ and satisfying the moment map relations $\widehat\mu_\db(X,Y)=\Irm_\th(\db)$. This category is not additive because we have imposed that the representations have dimension $1$ at the vertex $\infty$. However, it does make sence to add an object of $\Rep_{\theta}(\hQ_\ell)$ and an object of $\Rep_{\theta}(\Qov_\ell)$ getting an object of $\Rep_{\theta}(\hQ_\ell)$.

An object $M$ of $\Rep_{\theta}(\hQ_\ell)$ is indecomposable as a representation of the quiver $\hQ_\ell$ if and only if the only possible decomposition $M=M_0\oplus M_1$ with $M_0\in \Rep_{\theta}(\hQ_\ell)$ and $M_1\in \Rep_{\theta}(\Qov_\ell)$ is $M=M\oplus 0$.

Denote by $E_\th$ the set of all possible dimension vectors $\db\in \ZM^{\ZM/\ell\ZM}$ such that there exists a simple representation in $\Rep_{\theta}(\hQ_\ell)$ with dimension vector $\widehat\db$. Sometimes we can write $E_{\th,\ell}$ instead of $E_\th$ to emphasize $\ell$.

\begin{rema}
\label{rk:couple-equiv-nd}
Assume $\db\in E_\th$. Then, by Lemma \cite[Lemma~7.2]{CB}, the couple $(\db,\th)$ is equivalent to a couple of the form $(n\delta_\ell,\th')$ with $n\geqslant 0$. In particuler, by  Propostion \ref{prop:quiver-cm}, the variety $\XC_\th(\db)$ is isomorphic to the Calogero-Moser space.
\end{rema}

Each object $M\in\Rep_{\theta}(\Qov_\ell)$ has a unique decomposition $M=M_0\oplus M_1$ such that $M_0\in \Rep_{\theta}(\hQ_\ell)$, $M_1\in \Rep_{\theta}(\Qov_\ell)$ and $M_0$ is indecomposable. Set $\dim^{\rm reg}M=\dim M_0\in \ZM^{\ZM/\ell\ZM}$.

Take a point $[M]\in\XC_\th(\db)$ presented by a semisimple representation $M\in \Rep_{\theta}(\hQ_\ell)$.

\begin{lem}
\label{lem:symple-leaves-descr}
Two points of $[M], [M']\in \XC_\th(\db)$ are in the same symplectic leaf if and only if we have $\rdim(M)=\rdim(M')$.
\end{lem}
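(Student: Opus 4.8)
The plan is to reduce the statement to the standard description of the symplectic leaves of a quiver variety as the loci of constant representation type, and then to observe that in our situation the representation type is governed by $\rdim$ alone.

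First I would recall that a point of $\XC_\th(\db)$ is the isomorphism class of a semisimple object $M\in\Rep_\th(\hQ_\ell)$ of dimension vector $\widehat\db$. Since the dimension of $M$ at the vertex $\infty$ equals $1$, exactly one simple summand $M_0$ of $M$ has dimension $1$ at $\infty$; then $M_0$ is a simple object of $\Rep_\th(\hQ_\ell)$ of dimension vector $\widehat{\rdim M}$, and the remaining summands form a semisimple object $N(M)\in\Rep_\th(\Qov_\ell)$ of dimension vector $\db-\rdim M$. Call the \emph{representation type} of $[M]$ the datum of $\rdim M$ together with the dimension vectors, counted with multiplicity, of the simple summands of $N(M)$. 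Since $a\ne 0$, each such summand has dimension vector in $\Sigma_\th$, so the representation type of $[M]$ encodes a way of writing $\db-\rdim M$ as a sum of elements of $\Sigma_\th$; by the $\ZM$-linear independence of $\Sigma_\th$ (equivalently, by Corollary~\ref{coro:unique-ss}) this decomposition is unique, and in fact $N(M)\cong L(\db-\rdim M)$. Hence, for fixed $\db$, the representation type of $[M]$ is completely determined by the vector $\rdim M$.

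Next I would invoke the structure theory of Marsden--Weinstein reductions: the symplectic leaves of $\XC_\th(\db)$ are exactly the subsets on which the representation type is constant (see \cite{CB2}). Combined with the previous paragraph, these are precisely the non-empty fibres of $[M]\mapsto\rdim M$, which is the assertion of the lemma. For completeness one should also see that such a fibre is a single leaf: setting $\db_0=\rdim M$ (which lies in $E_\th$, as $M_0$ is simple), the map $[M_0]\mapsto[M_0\oplus L(\db-\db_0)]$ identifies the fibre with the locus of $\XC_\th(\db_0)$ consisting of simple representations; by Remark~\ref{rk:couple-equiv-nd} and Lemma~\ref{lem:dim-CM}, $\XC_\th(\db_0)$ is a normal, hence irreducible, Calogero--Moser space, so this locus is an open connected subvariety, and being a reduction stratum it carries the symplectic structure restricted from $\XC_\th(\db)$; thus it is one symplectic leaf.

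The main obstacle is the input from quiver variety theory used in the last paragraph, namely that the symplectic leaves of $\XC_\th(\db)$ coincide with the representation-type strata; this rests on the \'etale-local model for $\XC_\th(\db)$ near a semisimple point and on the compatibility, via Proposition~\ref{prop:quiver-cm}, of the Poisson structures on the Calogero--Moser space and on the quiver variety. Granting this, everything else is formal manipulation of the decomposition $M=M_0\oplus N(M)$ together with the uniqueness statements for $\Sigma_\th$ recalled in \S\ref{subs:symp-leaves} and above.
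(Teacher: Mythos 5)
Your argument is correct and follows essentially the same route as the paper: decompose the semisimple representative as $M_0\oplus N(M)$, note that by Corollary~\ref{coro:unique-ss} (using $a\ne 0$) the whole representation type is determined by $\rdim M$ alone, and then invoke the general fact that the symplectic leaves of the quiver variety are the representation-type strata. The only difference is bookkeeping: the paper takes that last input from \cite[Theorem~1.9]{bellamy2} rather than \cite{CB2}, and your extra paragraph on connectedness of a stratum is subsumed in that reference.
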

\begin{proof}
Let us decompse $M$ in a direct sum of simple representations $M=\bigoplus_{r=0}^k M_r$, where $M_0\in \Rep_{\theta}(\hQ_\ell)$ and other summands are in $\Rep_{\theta}(\Qov_\ell)$.

Once we know the dimension vector $\db'$ of $M_0$, we know automatically $k$ and the dimension vectors of $M_1, M_2, \ldots,M_k$ (up to a permutation) because by Corollary \ref{coro:unique-ss}, there is a unique semisimple representation in $\Rep_{\theta}(\Qov_\ell)$ of dimension vector $\db-\db'$. Then the statement follows from the description of symplectic leaves given in \cite[Theorem~1.9]{bellamy2}.
\end{proof}

For two dimension vectors $\db$ and $\db'$ we set $\LG^\db_{\db'}=\{[M]\in \XC_\th(\db);~\rdim(M)=\db'\}$. By Lemma \ref{lem:symple-leaves-descr} $\LG^\db_{\db'}$ is either a symplectic leaf of $\XC_\th(\db)$ or is empty.

\begin{lem}
\label{lem:order-sym-leaves}
The symplectic leaves $\LG^\db_{\db'}\subset \XC_\th(\db)$ define a finite stratification of $\XC_\th(\db)$ into locally closed subsets.
For two simplectic leaves $\LG^\db_{\db'}$ and $\LG^\db_{\db''}$ of $\XC_\th(\db)$ we have $\LG^\db_{\db'}\subset \overline{\LG^\db_{\db''}}$ if and only if $\db''-\db'\in \Sigma\Sigma_\th$.
\end{lem}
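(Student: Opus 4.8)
The plan is to read the statement off Lemma~\ref{lem:symple-leaves-descr}, which identifies the non‑empty sets among the $\LG^\db_{\db'}$ with the symplectic leaves of $\XC_\th(\db)$. These are indexed by the $\db'\in E_\th$ with $\db-\db'\in\Sigma\Sigma_\th$, and since for such $\db'$ one has $0\le\db'\le\db$ componentwise, there are only finitely many of them. Thus $\XC_\th(\db)$ has finitely many symplectic leaves, and, these being symplectic leaves of a (quiver) variety of this kind, they are automatically pairwise disjoint, locally closed, cover $\XC_\th(\db)$, and the closure of each is a union of leaves (see~\cite{bellamy2}) — which gives the asserted finite stratification into locally closed subsets. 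It then remains to determine the closure order.

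For that I would first introduce, for every $\eG\in E_\th$ with $\db-\eG\in\Sigma\Sigma_\th$, the morphism
$$\iota_\eG\colon\ \XC_\th(\eG)\ \longto\ \XC_\th(\db),\qquad [N]\ \longmapsto\ [N\oplus L(\db-\eG)],$$
obtained by fixing a representation of type $L(\db-\eG)$ on complements of the $\CM^{e_i}$ inside the $\CM^{d_i}$ and taking direct sum with it; this is a $\Gb\Lb(\eG)$‑equivariant closed immersion of representation varieties, so it descends to the GIT quotients, and it is injective on points by Krull--Schmidt. Whenever $\fG\in E_\th$ and $\eG-\fG\in\Sigma\Sigma_\th$, Corollary~\ref{coro:unique-ss} forces $L(\db-\fG)=L(\eG-\fG)\oplus L(\db-\eG)$ (both sides are semisimple of dimension $\db-\fG$), so $\iota_\eG$ maps $\LG^{\eG}_{\fG}$ onto $\LG^\db_{\fG}$ — the invariant $\rdim$ being unchanged because the summand $L(\db-\eG)$ carries no framing. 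In particular $\iota_\eG(\LG^{\eG}_{\eG})=\LG^\db_{\eG}$; and since $\eG\in E_\th$, the variety $\XC_\th(\eG)$ is a Calogero--Moser space by Remark~\ref{rk:couple-equiv-nd}, hence irreducible, with $\LG^{\eG}_{\eG}$ a non‑empty, therefore dense, open subset.

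The implication ``$\db''-\db'\in\Sigma\Sigma_\th\Rightarrow\LG^\db_{\db'}\subseteq\overline{\LG^\db_{\db''}}$'' then follows formally: applying the above with $\eG=\db''$ and $\fG=\db'$ gives $\LG^\db_{\db'}=\iota_{\db''}(\LG^{\db''}_{\db'})$, and since $\LG^{\db''}_{\db'}\subseteq\XC_\th(\db'')=\overline{\LG^{\db''}_{\db''}}$ while $\iota_{\db''}$ is continuous with $\iota_{\db''}(\LG^{\db''}_{\db''})=\LG^\db_{\db''}$, one obtains $\LG^\db_{\db'}\subseteq\iota_{\db''}\bigl(\overline{\LG^{\db''}_{\db''}}\bigr)\subseteq\overline{\iota_{\db''}(\LG^{\db''}_{\db''})}=\overline{\LG^\db_{\db''}}$.

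The converse — equivalently, the sharp identity $\overline{\LG^\db_{\db''}}=\bigsqcup_{\db'\in E_\th,\ \db''-\db'\in\Sigma\Sigma_\th}\LG^\db_{\db'}$ — is where the real work lies, and I expect it to be the main obstacle: one must rule out any further symplectic leaf inside $\overline{\LG^\db_{\db''}}$, or equivalently show that $\iota_{\db''}$ has closed image. I would deduce this from the stratification theory of affine quiver varieties of Crawley--Boevey and Nakajima (\cite{CB}, \cite{CB2}): such a variety is the disjoint union of locally closed strata labelled by the representation type of the underlying semisimple module, and the closure of a stratum is the union of the strata of more degenerate type. In the present situation Corollary~\ref{coro:unique-ss} collapses the representation type to the single vector $\rdim$, and ``more degenerate'' becomes exactly ``$\db'\in E_\th$ and $\db''-\db'\in\Sigma\Sigma_\th$'' — a smaller framed simple $\db'$ being obtained by absorbing a sub‑collection of the unframed simple summands into the framed component. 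Combined with the previous paragraph this yields the asserted order (and re‑confirms that the $\LG^\db_{\db'}$ form a genuine stratification); the delicate point is to carry out this dictionary between the abstract notion of representation type and the combinatorics of $\Sigma_\th$, $E_\th$ and $\Sigma\Sigma_\th$ correctly, everything else being formal.
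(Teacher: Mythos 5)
Your first two steps are fine. The finiteness and locally-closed-stratification claims are exactly what the paper also takes from \cite{bellamy2}, and your proof of the implication $\db''-\db'\in\Sigma\Sigma_\th\Rightarrow\LG^\db_{\db'}\subseteq\overline{\LG^\db_{\db''}}$ via the direct-sum morphism $\iota_{\db''}\colon[N]\mapsto[N\oplus L(\db-\db'')]$ and continuity is a legitimate alternative to the paper's argument, which instead verifies the stabilizer-containment criterion of \cite[Prop.~3.6]{bellamy2} for the representatives $L(\db-\db')\oplus M'$ and $L(\db-\db'')\oplus M''$ (and the paper itself uses your map, called $\phi$ there, in the next lemma, so its existence is not an issue).

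The converse, however, is a genuine gap, and you flag it yourself. The fact you propose to quote --- that for these Marsden--Weinstein reductions the closure of a representation-type stratum is \emph{exactly} the union of the strata of more degenerate type --- is not in \cite{CB} or \cite{CB2}: those papers give the stratification by representation type, the dimension vectors of simples, and the product decomposition, but not the closure order. The closure order in the needed generality is precisely what \cite[Prop.~3.6]{bellamy2} supplies, and it is phrased in terms of containment of stabilizers of semisimple representatives, not of representation types; so even with the correct citation one must still convert ``there is a semisimple $K$ with $[K]\in\LG^\db_{\db'}$ whose stabilizer contains the stabilizer of $L(\db-\db'')\oplus M''$'' into ``$\db''-\db'\in\Sigma\Sigma_\th$''. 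That conversion is the actual content of the ``only if'' half, and it is exactly the dictionary you defer as ``the delicate point''. The paper carries it out explicitly: take $g$ in the stabilizer of $L(\db-\db'')\oplus M''$ acting by $1$ on $M''$ and by $2$ on $L(\db-\db'')$; since $g$ also stabilizes $K$, its eigenspaces give a decomposition $K=K_1\oplus K_2$ into subrepresentations with $\dim K_1=\widehat{\db''}$; writing $K_1=K_{10}\oplus K_{11}$ with $K_{10}$ the framed indecomposable summand, one has $\dim K_{10}=\widehat{\db'}$ because $\rdim K=\db'$, hence $K_{11}\in\Rep_\th(\Qov_\ell)$ has dimension vector $\db''-\db'$, which forces $\db''-\db'\in\Sigma\Sigma_\th$. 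Without this (or an equivalent) argument, your proof of the converse reduces to a citation of a statement that is not available in the form you need.
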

\begin{proof}
This statement is a special case of \cite[Prop.~3.6]{bellamy2}.

Let us give some details. 
Let $M', M''\in \Rep_\th(\hQ_\ell)$ be simple representations with dimension vectors $\widehat{\db'}$ and $\widehat{\db''}$ respectively. Then we have $[L(\db-\db')\oplus M']\in \LG^\db_{\db'}$ and $[L(\db-\db'')\oplus M'']\in \LG^\db_{\db''}$.

Assume that we have $\db''-\db'\in \Sigma\Sigma_\th$. Then we have $L(\db-\db')\simeq L(\db-\db'')\oplus L(\db''-\db')$. Then the stabilizer of the representation  $L(\db-\db'')\oplus M''$ in $\Gb\Lb(\db)$ is clearly contained in the stabilizer of the representation $L(\db-\db'')\oplus L(\db''-\db')\oplus M'$ in $\Gb\Lb(\db)$. Then by \cite[Prop.~3.6]{bellamy2}, we have $\LG^\db_{\db'}\subset \overline{\LG^\db_{\db''}}$.

Inversly, assume $\LG^\db_{\db'}\subset \overline{\LG^\db_{\db''}}$. Then, by \cite[Prop.~3.6]{bellamy2} there exists a semisimple representation $K\in \Rep_\th(\hQ)$ such that $[K]\in \LG^\db_{\db'}$ and the stabilizer of $K$ in $\Gb\Lb(\db)$ contains the stabilizer of $L(\db-\db'')\oplus M''$ in $\Gb\Lb(\db)$. Let $g$ be the element of the stabilizer of $L(\db-\db'')\oplus M''$ that acts on $M''$ by multiplication by $1$ and on $L(\db-\db'')$ by multiplication by $2$. Let $K_1$ and $K_2$ be the eigenspaces of $K$ with respect to the eigenvalues $1$ and $2$. Then, since $g$ is in the stabilizer of $K$, we get a decomposition $K=K_1\oplus K_2$ in a direct sum of subrepresentations. Moreover, we have $\dim K_1=\dim M''=\widehat{\db''}$. The representation $K_1$ can be decomposed as $K_1=K_{10}\oplus K_{11}$, where $K_{10}\in \Rep_\th(\hQ_\ell)$ is simple and $K_{11}\in \Rep_\th(\Qov_\ell)$. We clearly have $\dim K_{10}=\widehat{\db'}$. Then we get $\dim K_{11}=\widehat{\db''}-\widehat{\db'}=\db''-\db'$. This implies $\db''-\db'\in \Sigma\Sigma_\th$.
\end{proof}

\begin{prop}
\label{prop:can-decomp-of-d}
 \mbox{ }

 $a)$ For each dimension vector $\db$ such that $\XC_\th(\db)\ne \emptyset$, there is a decomposition $\db=\db_0+\db_1$ such that $\db_0\in E_\th$ and $\db_1\in\Sigma\Sigma_\th$ such that for any other decomposition $\db=\db'_0+\db'_1$ with $\db'_0\in E_\th$ and $\db'_1\in\Sigma\Sigma_\th$ we have $\db_0-\db'_0\in \Sigma\Sigma_\th$. 

$b)$ $\LG_{\db_0}$ is the unique open symplectic leaf in $\XC_\th(\db)$.

$c)$ We have an isomorphism of varieties 
$$
\XC_\th(\db_0)\simeq \XC_\th(\db),\qquad [M]\mapsto [M\oplus L(\db_1)].
$$
\end{prop}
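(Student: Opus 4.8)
The plan is to settle $a)$ and $b)$ from the structure of the stratification by symplectic leaves given in Lemma~\ref{lem:order-sym-leaves}, and then to deduce $c)$ by first describing the relevant map on closed points and afterwards upgrading a bijection to an isomorphism via a normality argument.

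First I would record that $\XC_\th(\db)$ is irreducible: it is a Marsden--Weinstein reduction for the framed quiver $\hQ_\ell$, so this is the analogue of \cite[Cor.~1.4]{CB2} used earlier. By Lemma~\ref{lem:order-sym-leaves} the nonempty leaves $\LG^\db_{\db'}$ form a finite stratification of $\XC_\th(\db)$ into locally closed subsets, so exactly one stratum is dense, and a dense locally closed subset is open; uniqueness of the dense stratum uses $\Sigma\Sigma_\th\subseteq\ZM_{\geqslant 0}^{\ZM/\ell\ZM}$, so that $\gamma\in\Sigma\Sigma_\th$ together with $-\gamma\in\Sigma\Sigma_\th$ forces $\gamma=0$, whence the equivalence of Lemma~\ref{lem:order-sym-leaves} collapses any two candidate dense leaves. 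Write this open leaf as $\LG^\db_{\db_0}$; being nonempty it forces $\db_0\in E_\th$, and choosing a point $[M_0\oplus M_1]$ in it, with $M_0\in\Rep_\th(\hQ_\ell)$ simple of dimension $\widehat{\db_0}$ and $M_1\in\Rep_\th(\Qov_\ell)$ semisimple, shows $\db_1:=\db-\db_0\in\Sigma\Sigma_\th$. For any other decomposition $\db=\db_0'+\db_1'$ with $\db_0'\in E_\th$ and $\db_1'\in\Sigma\Sigma_\th$, pick a simple $M_0'\in\Rep_\th(\hQ_\ell)$ of dimension $\widehat{\db_0'}$; then $[M_0'\oplus L(\db_1')]$ lies in $\LG^\db_{\db_0'}$, which is therefore nonempty, hence contained in $\overline{\LG^\db_{\db_0}}=\XC_\th(\db)$, so Lemma~\ref{lem:order-sym-leaves} gives $\db_0-\db_0'\in\Sigma\Sigma_\th$. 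This proves $a)$, and $b)$ is the construction of $\LG^\db_{\db_0}$ together with the uniqueness of the dense stratum just established.

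For $c)$: fixing an identification $\CM^{\db}=\CM^{\db_0}\oplus\CM^{\db_1}$ and a representative of $L(\db_1)$, the rule $M\mapsto M\oplus L(\db_1)$ is a $\Gb\Lb(\db_0)$-equivariant morphism $\YC_\th(\db_0)\to\YC_\th(\db)$, and its composite with $\YC_\th(\db)\to\XC_\th(\db)$ is $\Gb\Lb(\db_0)$-invariant, so (the affine GIT quotient being categorical) it descends to a morphism $\phi\colon\XC_\th(\db_0)\to\XC_\th(\db)$, $[M]\mapsto[M\oplus L(\db_1)]$. I would then check $\phi$ is bijective on closed points. Injectivity is cancellation in the decomposition into simples. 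For surjectivity, given $[N]\in\XC_\th(\db)$ write $N=N_0\oplus N_1$ with $N_0$ the unique simple summand of dimension $1$ at the vertex $\infty$, so $\dim N_0=\widehat{\db_0'}$ with $\db_0'=\rdim(N)\in E_\th$, and $N_1\in\Rep_\th(\Qov_\ell)$ semisimple; by $a)$ we have $\db_0-\db_0'\in\Sigma\Sigma_\th$, and since $L(\alpha)\oplus L(\beta)\cong L(\alpha+\beta)$ for $\alpha,\beta\in\Sigma\Sigma_\th$ (immediate from the uniqueness in Corollary~\ref{coro:unique-ss}), Corollary~\ref{coro:unique-ss} gives $N_1\cong L(\db-\db_0')\cong L(\db_1)\oplus L(\db_0-\db_0')$, so $N\cong\bigl(N_0\oplus L(\db_0-\db_0')\bigr)\oplus L(\db_1)$ lies in the image of $\phi$.

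Finally I would upgrade this bijection to an isomorphism. The source $\XC_\th(\db_0)$ is a Calogero--Moser space by Remark~\ref{rk:couple-equiv-nd}, hence irreducible; as $\phi$ is a bijective (in particular quasi-finite) dominant morphism, $\XC_\th(\db)$ is irreducible of the same dimension and $\phi$ is generically injective, hence --- we are in characteristic $0$ --- birational. Since $\XC_\th(\db)$ is normal (\cite{CB2}), Zariski's Main Theorem applied to the birational morphism $\phi$ with finite fibers shows $\phi$ is an open immersion, and surjectivity then forces $\phi$ to be an isomorphism. The main obstacle is precisely this last step: there being no obvious inverse morphism, the passage from the transparent bijection on points to an isomorphism of varieties requires invoking normality of $\XC_\th(\db)$ from \cite{CB2} and a use of Zariski's Main Theorem; a secondary technical point is checking that direct sum with the fixed representation $L(\db_1)$ genuinely descends to a morphism of the GIT quotients.
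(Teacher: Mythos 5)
For parts $a)$ and $b)$ your argument is essentially the paper's: the paper gets the distinguished leaf by quoting Martino's result that the smooth locus of $\XC_\th(\db)$ is a symplectic leaf and then uses irreducibility (\cite[Cor.~1.4]{CB2}) plus Lemma~\ref{lem:order-sym-leaves}, whereas you extract the unique dense (hence open) stratum directly from the finite stratification of the irreducible variety; both routes then conclude with the same closure criterion of Lemma~\ref{lem:order-sym-leaves}, so this part is fine and differs only cosmetically. For part $c)$ you genuinely diverge: the paper disposes of it in one line by invoking Crawley-Boevey's decomposition theorem \cite[Theorem~1.1]{CB2}, while you construct the morphism $\phi\colon\XC_\th(\db_0)\to\XC_\th(\db)$, $[M]\mapsto[M\oplus L(\db_1)]$, check bijectivity on closed points (this part is correct and is the same mechanism the paper uses in the next lemma on closures of leaves), and then try to upgrade to an isomorphism via Zariski's Main Theorem.

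The weak point is exactly the normality input in that last step. You need $\XC_\th(\db)$ to be normal for an \emph{arbitrary} dimension vector $\db$, and you attribute this to \cite{CB2}; but the cited paper (the decomposition paper) does not prove normality --- that is a separate theorem of Crawley-Boevey (\emph{Normality of Marsden-Weinstein reductions for representations of quivers}, Math.\ Ann.\ 2003), which is not among the results used in this paper. Within the paper's own toolkit, normality is only available for $n\delta_\ell$ (Lemma~\ref{lem:dim-CM}) and hence, via Lusztig's isomorphism, for $\db\in E_\th$ (Remark~\ref{rem:dim-nd+nu}); normality of $\XC_\th(\db)$ for general $\db$ is precisely what would follow from the isomorphism of part $c)$ itself, so as written your argument is circular unless you import the external normality theorem with the correct reference. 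Note that this is also why the paper's subsequent lemma on $\overline{\LG^\db_{\db'}}$, which runs the same bijective-plus-birational argument with the roles reversed (normal source, unknown target), only concludes ``normalization'' rather than ``isomorphism''. So: your route does work, and is arguably more self-contained in spirit than quoting the decomposition theorem, but only after replacing the citation for normality of the target; otherwise the paper's appeal to \cite[Theorem~1.1]{CB2} is the cleaner and logically complete path.
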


\begin{proof}
	
By \cite[Cor.~1.45]{martino}, the smooth locus of $\XC_\th(\db)$ is a symplectic leaf. Then it should be of the form $\LG^\db_{\db_0}$ for some $\db_0$.
Since $\XC_\th(\db)$ is irreducible by \cite[Cor.~1.4]{CB2}, we have $\overline{\LG^\db_{\db_0}}=\XC_\th(\db)$.
Then, by Lemma \ref{lem:order-sym-leaves}  for any other symplectic leaf $\LG^\db_{\db'_0}$ we have $\db_0-\db'_0\in\Sigma\Sigma_\th$. This proves $a)$ and $b)$.

Part $c)$ follows from \cite[Theorem~1.1]{CB2}.
\end{proof}

Now, we set $\XC_\th(\db)^{\rm reg}=\LG^\db_{\db_0}$. Assume that $\db$ and $\db'$ are such that $\LG^\db_{\db'}$ is non-empty.

\begin{lem}
The normalization of the closure of $\LG^\db_{\db'}$ is isomorphic to $\XC_{\th}(\db')$. The normalization map is bijective.
\end{lem}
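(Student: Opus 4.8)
The plan is to build an explicit bijective morphism $f\colon\XC_\th(\db')\to\overline{\LG^\db_{\db'}}$ by ``adding the fixed semisimple part $L(\db-\db')$'', and then to recognize it as the normalization.

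First, observe that $\LG^\db_{\db'}\neq\vide$ forces $\db'\in E_\th$ and $\db_1:=\db-\db'\in\Sigma\Sigma_\th$: a point of $\LG^\db_{\db'}$ is represented by a semisimple $N=N_0\oplus N_1$ with $N_0$ simple in $\Rep_\th(\hQ_\ell)$ of dimension $\widehat{\db'}$ and $N_1$ a representation in $\Rep_\th(\Qov_\ell)$ of dimension $\db_1$. By Remark~\ref{rk:couple-equiv-nd}, $(\db',\th)$ is equivalent to some $(n\delta_\ell,\th')$, so $\XC_\th(\db')$ is isomorphic to a Calogero--Moser space; in particular it is irreducible (the center $\Zb_c$ is a domain) and, by Lemma~\ref{lem:dim-CM}, normal. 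To produce $f$ I would fix a representative $(X_L,Y_L)$ of the isomorphism class $L(\db_1)$; then $(X,Y,x,y)\mapsto(X\oplus X_L,\ Y\oplus Y_L,\ x\oplus 0,\ y\oplus 0)$ is a closed immersion $\YC_\th(\db')\hookrightarrow\YC_\th(\db)$ equivariant for the block inclusion $\Gb\Lb(\db')\hookrightarrow\Gb\Lb(\db)$ (acting on the $\db'$-block, trivially on the $\db_1$-block). Composing with $\YC_\th(\db)\to\XC_\th(\db)$ gives a $\Gb\Lb(\db')$-invariant morphism, hence a morphism $f\colon\XC_\th(\db')\to\XC_\th(\db)$, $[M]\mapsto[M\oplus L(\db_1)]$.

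Next I would identify the image of $f$ with $\overline{\LG^\db_{\db'}}$. Writing $M=M_0\oplus M_1$ with $M_0$ indecomposable in $\Rep_\th(\hQ_\ell)$, one has $M\oplus L(\db_1)=M_0\oplus(M_1\oplus L(\db_1))$ with $M_1\oplus L(\db_1)\in\Rep_\th(\Qov_\ell)$, so $\rdim(M\oplus L(\db_1))=\rdim(M)$; thus $f$ sends $\LG^{\db'}_{\db''}$ into $\LG^\db_{\db''}$. The $\db''$ with $\LG^{\db'}_{\db''}\neq\vide$ are precisely those with $\db''\in E_\th$ and $\db'-\db''\in\Sigma\Sigma_\th$; for such $\db''$ one has $\db-\db''=(\db-\db')+(\db'-\db'')\in\Sigma\Sigma_\th$, whence $\LG^\db_{\db''}\neq\vide$ and, by Lemma~\ref{lem:order-sym-leaves}, $\LG^\db_{\db''}\subseteq\overline{\LG^\db_{\db'}}$. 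Conversely, given $[N]\in\LG^\db_{\db''}$ with $\db'-\db''\in\Sigma\Sigma_\th$, write $N=N_0\oplus L(\db-\db'')$ with $N_0$ simple in $\Rep_\th(\hQ_\ell)$ of dimension $\widehat{\db''}$; Corollary~\ref{coro:unique-ss} gives $L(\db-\db'')\simeq L(\db_1)\oplus L(\db'-\db'')$, so $N\simeq\big(N_0\oplus L(\db'-\db'')\big)\oplus L(\db_1)$ lies in the image of $f$ (note $N_0\oplus L(\db'-\db'')$ is a point of $\XC_\th(\db')$). Hence $f\big(\XC_\th(\db')\big)=\overline{\LG^\db_{\db'}}$, and since the target is reduced, $f$ factors as a morphism $\XC_\th(\db')\to\overline{\LG^\db_{\db'}}$.

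Finally, $f$ is injective: if $M\oplus L(\db_1)\simeq M'\oplus L(\db_1)$ with $M,M'$ semisimple, comparing multiplicities of the (pairwise non-isomorphic) simple summands and cancelling those of $L(\db_1)$ gives $M\simeq M'$. So $f\colon\XC_\th(\db')\to\overline{\LG^\db_{\db'}}$ is a bijective morphism from an irreducible normal variety; since we work over $\CM$, a bijective dominant morphism of irreducible varieties is birational, so $f$ is birational. It remains to know that $f$ is \emph{finite}; this is exactly the quiver-variety input of \cite[Theorem~1.1]{CB2} that already underlies Proposition~\ref{prop:can-decomp-of-d}(c), the latter being the special case $\db'=\db_0$ in which $f$ is moreover an isomorphism. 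Granting this, $f$ is a finite birational morphism from a normal irreducible variety, so by uniqueness of the normalization $\XC_\th(\db')$ is the normalization of $\overline{\LG^\db_{\db'}}$ and the normalization map is $f$, which we have shown to be bijective. The one genuinely nontrivial point is the finiteness of $f$: the identification of the image, the injectivity, and the normality and irreducibility of the source are bookkeeping with the combinatorial and GIT descriptions already in place, whereas finiteness — which is what rules out the failure mode of a non-finite birational bijection — rests on Crawley-Boevey's results, and reproving the relevant case of \cite[Theorem~1.1]{CB2} would be the hard part.
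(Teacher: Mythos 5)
Your proof is essentially the paper's: the same map $[M]\mapsto[M\oplus L(\db-\db')]$, bijectivity via the uniqueness of the semisimple complement (Corollary \ref{coro:unique-ss}), birationality from the isomorphism over the open leaf $\LG^\db_{\db'}$, and the conclusion from normality of $\XC_\th(\db')$. Your explicit flagging of the finiteness of the map (needed to upgrade a bijective birational morphism from a normal source to the normalization) is a point the paper's proof passes over silently, so it is a welcome refinement rather than a different route.
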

\begin{proof}

Consider the following homomorphism of algebraic varieties:
$$
\phi\colon\XC_{\th}(\db')\to \overline{\LG^\db_{\db'}},\qquad [M]\mapsto [M\oplus L(\db-\db')].
$$
Let us show that $\phi$ is bijective.


Fix a point $[N]\in \overline{\LG^\db_{\db'}}$ presented by a semisimple representation $N$.
We can decompose $N$ as $N=M\oplus L(\db-\db')$ for some semisimple $M\in \Rep_\th(\hQ_\ell)$. 
Then it is clear that the fibre $\phi^{-1}([N])$ contains a unique point: $[M]$. 

Moreover, the map $\phi$ restricts to an isomorphism $\XC_{\th}(\db')^{\reg}\to {\LG^\db_{\db'}}$, so $\phi$ is birational. Now, since $\XC_{\th}(\db')$ is normal, the map $\phi$ is a normalization. 

\end{proof}

\begin{coro}
\label{coro:norm-cl-leaf}
The normalization of the closure of each symplectic leaf $\LG^\db_{\db'}$ of the variety $\XC_{\th}(\db)$ is isomorphic to a variety of the form $\XC_{\th'}(r\delta_\ell)$ for some $r\geqslant 0$ and some $\th'\in \CM^{\ZM/\ell\ZM}$. 
\end{coro}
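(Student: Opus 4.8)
The plan is to read this off from the lemma immediately preceding it, together with Remark~\ref{rk:couple-equiv-nd}. That lemma already identifies the normalization of $\overline{\LG^\db_{\db'}}$ with the quiver variety $\XC_\th(\db')$, so the only thing left is to recognize $\XC_\th(\db')$ as a Calogero-Moser space of the stated shape, and for this I want to feed the couple $(\db',\th)$ into Remark~\ref{rk:couple-equiv-nd}. The single point needing justification is therefore that a non-empty leaf $\LG^\db_{\db'}$ has $\db'\in E_\th$.

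First I would check this reduction. Choose a point $[M]\in\LG^\db_{\db'}$ represented by a semisimple $M\in\Rep_\th(\hQ_\ell)$ (every closed point of $\XC_\th(\db)$ admits such a representative, as already used before Lemma~\ref{lem:symple-leaves-descr}). In the canonical decomposition $M=M_0\oplus M_1$ with $M_0\in\Rep_\th(\hQ_\ell)$ indecomposable and $M_1\in\Rep_\th(\Qov_\ell)$, the summand $M_0$ is a direct summand of a semisimple representation, hence itself semisimple, hence simple. Since $[M]\in\LG^\db_{\db'}$ means $\rdim(M)=\db'$, this simple object has dimension vector $\widehat{\db'}$, so $\db'\in E_\th$ by the very definition of $E_\th$.

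Then I would conclude. By the preceding lemma the normalization of $\overline{\LG^\db_{\db'}}$ is $\XC_\th(\db')$; by Remark~\ref{rk:couple-equiv-nd}, since $\db'\in E_\th$, the couple $(\db',\th)$ is equivalent, for the relation $\sim$ of \S\ref{subs:Lusztig-iso}, to a couple $(r\delta_\ell,\th')$ with $r\geqslant 0$, so that $\XC_\th(\db')\simeq\XC_{\th'}(r\delta_\ell)$ by the Lusztig isomorphism~\eqref{eq:iso-lusztig}, and $\XC_{\th'}(r\delta_\ell)$ is a Calogero-Moser space by Proposition~\ref{prop:quiver-cm} (with the conventions of the remarks following it when $r\in\{0,1\}$). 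This is the assertion; composing with the normalization map of the lemma shows moreover that the resulting normalization $\XC_{\th'}(r\delta_\ell)\to\overline{\LG^\db_{\db'}}$ is bijective. There is no genuine obstacle: the substance is already contained in the preceding lemma and in Remark~\ref{rk:couple-equiv-nd}, and the reduction $\LG^\db_{\db'}\neq\vide\Rightarrow\db'\in E_\th$ is immediate.
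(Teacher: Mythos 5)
Your proposal is correct and follows essentially the same route as the paper: the preceding lemma gives the normalization $\XC_\th(\db')\to\overline{\LG^\db_{\db'}}$, and then Remark~\ref{rk:couple-equiv-nd} together with the Lusztig isomorphism~\eqref{eq:iso-lusztig} identifies $\XC_\th(\db')$ with some $\XC_{\th'}(r\delta_\ell)$. The only difference is that you spell out the step $\db'\in E_\th$ (via the simple summand of a semisimple representative carrying the framing vertex), which the paper asserts without comment; your justification of it is correct.
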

\begin{proof}

First of all, note that we have $\db'\in E_\th$. By Remark \ref{rk:couple-equiv-nd}, the pair $(\db',\th)$ is equivalent to some pair of the form $(r\delta_\ell,\th')$ where $r\geqslant 0$ and $\theta'\in \CM^{\ZM/\ell\ZM}$. Then the isomorphism \eqref{eq:iso-lusztig} yields $\XC_{\th}(\db')\simeq \XC_{\th'}(r\delta_\ell)$.
\end{proof}

Combining the corollary above with Proposition \ref{prop:quiver-cm} yields the following theorem.
\begin{theo}
	\label{thm:norm-cl-sympl-leaf}
The normalization of the closure of each symplectic leaf of the Calogero-Moser space of type $G(\ell,1,n)$ with $a\ne 0$ is isomorphic to a Calogero-Moser space of type $G(\ell,1,r)$ for some $r\in [0;n]$.
\end{theo}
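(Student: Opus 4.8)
The plan is to transport the description of symplectic leaves of §\ref{subs:symp-leaves} through the identification of a Calogero--Moser space with a quiver variety, and then simply invoke Corollary \ref{coro:norm-cl-leaf}. First I would fix the parameter $c$ for $G(\ell,1,n)$ with $a=c_s\ne 0$ and let $\th\in\CM^{\ZM/\ell\ZM}$ be the associated parameter given by \eqref{eq:theta-k}, so that $\Sigma(\th)=-a\ne 0$ and the hypotheses of §\ref{subs:symp-leaves} hold. By Proposition \ref{prop:quiver-cm} there is a $\CM^\times$-equivariant isomorphism $\ZCB_{\! c}\simeq \XC_\th(n\delta_\ell)$; since this isomorphism is compatible with the Poisson structures on the two sides (this is part of \cite{gordon quiver}; see also \cite{EG}), it identifies the symplectic leaves of $\ZCB_{\! c}$ with those of $\XC_\th(n\delta_\ell)$. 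By Lemmas \ref{lem:symple-leaves-descr} and \ref{lem:order-sym-leaves}, the latter are exactly the nonempty strata $\LG^{n\delta_\ell}_{\db'}$ with $\db'\in E_\th$ and $n\delta_\ell-\db'\in\Sigma\Sigma_\th$, so it suffices to treat such a leaf.

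Next I would apply Corollary \ref{coro:norm-cl-leaf}: the normalization of $\overline{\LG^{n\delta_\ell}_{\db'}}$ is isomorphic to $\XC_{\th'}(r\delta_\ell)$ for some $r\ge 0$ and some $\th'\in\CM^{\ZM/\ell\ZM}$ (and in fact $\Sigma(\th')=\Sigma(\th)=-a\ne 0$, since $\Sigma$ is constant along the equivalence $\sim$ relating $(\db',\th)$ to $(r\delta_\ell,\th')$). Then I would run Proposition \ref{prop:quiver-cm} in the other direction: the correspondence $c\mapsto\th$ of \eqref{eq:k} and \eqref{eq:theta-k} is a bijection onto $\CM^{\ZM/\ell\ZM}$, so there is a parameter $c'$ for $G(\ell,1,r)$ whose associated parameter is $\th'$, whence $\XC_{\th'}(r\delta_\ell)\simeq \ZCB_{\! c'}(\CM^r,G(\ell,1,r))$ --- using the Remark following Proposition \ref{prop:quiver-cm} for $r\le 1$ and the convention that $G(\ell,1,0)$ yields a point. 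This already shows that the normalization of the closure of any symplectic leaf is a Calogero--Moser space of type $G(\ell,1,r)$.

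Finally, to see that $r\in[0,n]$ I would compare dimensions: $\overline{\LG^{n\delta_\ell}_{\db'}}$ is a closed subvariety of $\XC_\th(n\delta_\ell)$, which has dimension $2n$ by Lemma \ref{lem:dim-CM}; normalization preserves dimension; and $\dim\XC_{\th'}(r\delta_\ell)=2r$ by Lemma \ref{lem:dim-CM} again. Hence $2r\le 2n$, i.e. $r\in[0,n]$. I expect the only genuinely non-bookkeeping point to be the Poisson-compatibility of the isomorphism of Proposition \ref{prop:quiver-cm}: this is what is needed in order to know that the symplectic leaves on the Cherednik-algebra side really are the strata $\LG^{n\delta_\ell}_{\db'}$ analysed via quiver representations; once that is granted, the theorem is just the combination of Proposition \ref{prop:quiver-cm} and Corollary \ref{coro:norm-cl-leaf} together with the dimension count above.
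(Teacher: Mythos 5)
Your proposal is correct and follows essentially the same route as the paper, whose proof is exactly the one-line combination of Corollary \ref{coro:norm-cl-leaf} with Proposition \ref{prop:quiver-cm}. The points you spell out explicitly --- Poisson compatibility of Gordon's isomorphism, invertibility of the correspondence $c\mapsto\th$ with $\Sigma(\th')=\Sigma(\th)=-a$, and the dimension count giving $r\in[0;n]$ --- are exactly the details the paper leaves implicit, so this is a faithful (and slightly more careful) rendering of the intended argument.
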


\begin{rema}
Let us give an explicit relation between the parameters of the two Calogero-Moser spaces in the theorem above.

The original Calogero-Moser space is isomorphic to the quiver variety of the form $\XC_\theta(n\delta_\ell)$. Now, we consider the symplectic leaf $\LG^{n\delta_\ell}_{\db'}$, the normalization of its closure is isomorphic to  $\XC_\theta(\db')$.
Then, by Remark \ref{rk:couple-equiv-nd}, we can find $w\in W^\aff_\ell$ that realizes an equivalence between $(\db',\th)$ and $(w(\db'),w(\th))$ and such that $w(\db')$ is of the form $r\delta_\ell$. Set $\th'=w(\th)$. We have an isomorphism $\XC_\theta(\db')\simeq \XC_{\theta'}(r\delta_\ell)$. Since we have $w(\db')=r\delta_\ell$, then, by Lemma \ref{lem:affWeyl_translation-roots}, the element $w$ should be of the form $w=xt_{\db'}$, where $x\in W_\ell$.

Then, by Lemma \ref{lem:affWeyl_translation-theta}, we get $\th'=xt_{\db'}(\th)=x(\th-a\overline{\db'})$. Moreover, the action the element $x\in W_\ell$ on $\CM^{\ZM/\ell\ZM}$ corresponds to some permurtation of the parameters $k_0,k_1,\ldots,k_{\ell-1}$ (see \cite[Rem.~3.5]{BM}) and a permutation of the parameters does not change the Calogero-Moser space up to isomorphism, see \cite[Cor.~3.6]{BM}.

Now we see that the parameters $a, k_0,k_1,\ldots,k_{\ell-1}$ (corresponding to $\th$) of the original the Calogero-Moser space $\XC_\th(n\delta_\ell)$  are related with the parameters $a', k'_0,k'_1,\ldots,k'_{\ell-1}$ (corresponding to $\th'$) of the new Calogero-Moser space $\XC_{\th'}(r\delta_\ell)$ are related in the following way (up to a permutation of the parameters $k'_i$):
$$
a'=a,\qquad k'_i=k_i+(d'_{1-i}-d'_{-i}).
$$
In the case when $n$ or $r$ is equal to $1$, we can forget the parameter $a$ or $a'$ respectively. In the case $r=0$, the variety $\XC_{\th'}(r\delta)$ is just a point.

\end{rema}

\subsection{$\CM^\times$-fixed points}
\label{subs:C-fixed-points}

For each $J\subset \ZM/\ell\ZM$ we denote by $W_J$ the parabolic subgroup of $W^{\rm aff}_\ell$ generated by $s_i$ for $i\in J$. Let us say that $\th$ is \emph{$J$-standard} if the stibilizer $W_\th$ of $\th$ in $W^{\rm aff}_\ell$ is equal to $W_J$. We say that $\th\in\CM^{\ZM/\ell\ZM}$ is \emph{standard} it is $J$-standard for some $J\subset \ZM/\ell\ZM$. For a standard $\th$, the set $J$ is the set of indices $i\in \ZM/\ell\ZM$ such that $\th_i=0$.

Now, let us describe the $\CM^\times$-fixed points of $\XC_\th(\db)$. First of all, each couple $(\db,\th)$ is equivalent to a couple whose $\th$ is standard.

The following lemma is obvious.
	\begin{lem}
			\label{lem:Sigma-th-simple}
		Assume that $\th$ is $J$-standard. Then we have $\Sigma_\th=\{\alpha_i;~i\in J\}$.
	\end{lem}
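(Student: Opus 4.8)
The plan is to deduce the lemma from an explicit description of the whole set $R^+_\th=\{\db\in R^+;~\db\cdot\th=0\}$, namely the equality $R^+_\th=R^+_J$, where $R^+_J$ denotes the set of positive roots lying in $\sum_{i\in J}\ZM_{\geqslant 0}\alpha_i$ (equivalently, the positive roots of the root subsystem associated with the standard parabolic subgroup $W_J\subseteq W^\aff_\ell$). Observe first that the hypothesis $a\ne 0$ means $\Sigma(\th)=-a\ne 0$, so $\th\ne 0$ and $J\subsetneq\ZM/\ell\ZM$; hence $W_J$ is finite, all its roots are real, and no imaginary root $r\delta_\ell$ lies in $R^+_\th$. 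Granting the equality $R^+_\th=R^+_J$, the lemma follows from the description of $\Sigma_\th$ recalled above (the special case of \cite[Theorem~1.2]{CB}): $\Sigma_\th$ consists of the elements of $R^+_J$ that are not sums of two or more elements of $R^+_J$, and in a disjoint union of type $A$ root systems these are exactly the simple roots $\alpha_i$, $i\in J$ — any non-simple positive root of type $A$ equals $\alpha_a+\alpha_{a+1}+\cdots+\alpha_b$ with $b>a$, hence splits as $(\alpha_a+\cdots+\alpha_{b-1})+\alpha_b$ with both parts in $R^+_J$.

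So the real work is the equality $R^+_\th=R^+_J$. The inclusion $\supseteq$ is clear: for $J$-standard $\th$ one has $\th_i=0$ precisely for $i\in J$, so any $\db=\sum_{i\in J}d_i\alpha_i\in R^+_J$ satisfies $\db\cdot\th=0$. For $\subseteq$, I would take $\db\in R^+$ with $\db\cdot\th=0$ and consider the reflection $s_\db\in W^\aff_\ell$. I will use (see the proof of Lemma \ref{lem:affWeyl_translation-theta}) that the $W^\aff_\ell$-action on $\CM^{\ZM/\ell\ZM}$ is the contragredient of the usual action on $U:=\mathrm{span}_\CM(\alpha_0,\dots,\alpha_{\ell-1})\subseteq\widehat\hG^*$, and that under the identification $\ZM^{\ZM/\ell\ZM}\simeq R^\aff_\ell$ the pairing $\db\cdot\th$ is simply the evaluation of $\th\in U^*$ on $\db\in U$. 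Since $s_\db$ acts on $U$ by $u\mapsto u-\langle u,\db^{\vee}\rangle\db$, its contragredient sends $\th$ to $\th-(\db\cdot\th)\,\rho_\db$ for some fixed $\rho_\db\in U^*$; as $\db\cdot\th=0$ we get $s_\db(\th)=\th$, that is, $s_\db\in W_\th=W_J$. Since $W_J$ is a finite reflection group, its reflections are exactly the $s_\gamma$ with $\gamma\in R^+_J$; comparing reflections forces $\db=\gamma\in R^+_J$. This establishes $R^+_\th=R^+_J$ and hence the lemma.

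The main obstacle is precisely this last inclusion: ruling out that a root $\db$ with $\supp(\db)\not\subseteq J$ satisfies $\db\cdot\th=0$. This is the point where one must use the full force of the hypothesis that $\th$ be \emph{$J$-standard}, not merely that $\th_i=0$ for $i\in J$ — the latter would allow ``accidental'' relations $\db\cdot\th=0$ with larger support, which is exactly the phenomenon excluded by $W_\th=W_J$. Everything else (the reduction to $R^+_J$ and the type $A$ combinatorics at the end) is routine.
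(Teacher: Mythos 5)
Your proof is correct. The paper itself offers no argument here (it simply declares the lemma obvious), so there is nothing to match against; your write-up supplies the missing justification, and it does so in a clean way. The reduction to the equality $R^+_\th=R^+_J$ is the right move, the inclusion $R^+_J\subseteq R^+_\th$ is immediate from $\th_i=0$ for $i\in J$, and your key step for the reverse inclusion is sound: since the action of $W^\aff_\ell$ on $\CM^{\ZM/\ell\ZM}$ is the contragredient of the usual action on the span of $\alpha_0,\dots,\alpha_{\ell-1}$ (as the paper itself notes in the proof of Lemma \ref{lem:affWeyl_translation-theta}), a positive real root $\db$ with $\db\cdot\th=0$ gives $s_\db(\th)=\th$, hence $s_\db\in W_\th=W_J$, and the standard fact that the reflections contained in a standard parabolic are exactly those attached to roots of the parabolic subsystem forces $\db\in R^+_J$; the final type-$A$ observation that the only positive roots not decomposable inside $R^+_J$ are the $\alpha_i$, $i\in J$, is also correct. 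You rightly flag that this is precisely where $J$-standardness (i.e.\ $W_\th=W_J$, not merely $\th_i=0$ for $i\in J$) is used. For comparison, the more pedestrian argument the author presumably had in mind is the one implicitly invoked later in the proof of Lemma \ref{lem:structure-Amu}: positive real roots of the cyclic quiver are consecutive sums $\alpha_a+\cdots+\alpha_b$ (possibly wrapping, i.e.\ plus $k\delta_\ell$), their pairing with $\th$ is the corresponding consecutive sum of the $\th_i$, and $J$-standardness guarantees that such a sum vanishes only if every term vanishes (with $a\neq 0$ excluding the wrapping case). That route is equivalent in content, and in fact the vanishing-consecutive-sum property is most naturally proved by exactly the reflection argument you give, so your proposal also serves as a justification for that later step of the paper.
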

	Let us now assume that $\theta$ is $J$-standard. 
For each partiction $\mu$, we construct a $\CM^\times$-fixed point in $\XC_{\th}(\Res_\ell(\mu))$. This construction is essentially the same as \cite[Section~5]{Przez}, however \cite{Przez} assumes that the variety $\XC_{\th}(\Res_\ell(\mu))$ is smooth and we don't need this assumption. 

Each partition $\mu$ can be described by some $k\in\ZM_{\geqslant 0}$ and $a_1,\ldots, a_k, b_1,\ldots,b_k\in \ZM_{\geqslant 0}$ where $k$ is maximal such that the Young diagram of $\mu$ contains a box in position $(k,k)$ and for each $r\in [1;k]$ there are $a_r$ boxes on the right of $(r,r)$ and $b_r$ boxes below $(r,r)$.
In other words, we see the Young diagram of the partition $\mu$ as a union of $k$ hooks. The box at position $(i,j)$ is in the $r$th hook if $\min(i,j)=r$. The numbers $a_r$ and $b_r$ are the lengths of the arm and of the leg of $r$th hook respectively.

For $i\in \ZM$, we use the convention that $\th_i$ means $\th_{(i\mod\ell)}$. Set $\beta_r=\sum_{i=-b_r}^{a_r}\theta_i$.

Let $V$ be a complex vector space with basis $\{v_{r,j};~r\in[1;k];~j\in[-b_r,a_r]\}$. It has a $\ZM$-grading $V=\bigoplus_{j\in \ZM} V_j$ such that $v_{r,j}\in V_j$. Consider two endomorphisms $X$ and $Y$ of this vector space given by
$$
X(v_{r,j})=
\begin{cases}
v_{r,j-1} &\mbox{ if } j>-b_r,\\
0 &\mbox{ if } j=-b_r,
\end{cases}
$$
and
$$
Y(v_{r,j})=
\begin{cases}
(\sum_{i=-b_r}^j \th_{i})v_{r,j+1}+\sum_{t>r} \beta_tv_{t,j+1} &\mbox{ if } j\in[-b_r,-1]\\
-(\sum_{i=j+1}^{a_r}\th_{i})v_{r,j+1}-\sum_{t<r}\beta_tv_{t,j-1} &\mbox{ if } j\in [0;a_r-1],\\
-\sum_{t<r}\beta_tv_{t,j-1} &\mbox{ if } j=a_r,\\
\end{cases}
$$

Consider also the linear maps $x\colon \CM\to V_0$ and $y\colon V_0\to \CM$ given by 
$$
x(1)=-\sum_{r=1}^k \beta_r v_{r,0}\qquad  \mbox{ and } \qquad y(v_{r,0})=1.
$$

Then $(X,Y,x,y)$ yields a representation $A^\infty_\mu$ of the quiver $\hQ_\infty$. Appliying the map $\iota$ as in Definition \ref{def:map-i-inf}, we get a representation $A_\mu$ of the quiver $\hQ_\ell$. It satisfies the moment map relation $\widehat\mu_\db(A_\mu)=I_\th(\db)$.

\begin{lem}
\label{lem:structure-Amu}

Assume that $\th$ is $J$-standard.

$(a)$ If $\mu$ is a $J$-core, then $A_\mu$ is simple.

$(b)$ Assume that  $b$ is a removable box of $\mu$ with $\ell$-residue $i\in J$. Then we either have a short exact sequence
$$
0\to  L(\alpha_i)\to A_{\mu}\to A_{\mu\backslash b}\to 0
$$
or we have a short exact sequence
$$
0\to A_{\mu\backslash b}\to A_{\mu}\to  L(\alpha_i)\to 0.
$$

\end{lem}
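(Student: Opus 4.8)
The plan is to read off both statements from the explicit formulas defining $A_\mu$; the recurring mechanism is that $\theta_i=0$ for $i\in J$ kills the decisive structure constants, while the $J$-core hypothesis keeps the remaining ones nonzero.

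For part $(b)$, write $w\in V$ for the basis vector $v_{r,j_0}$ attached to the removable box $b$. Being removable, $b$ is the tip of an arm ($j_0=a_r>0$), the tip of a leg ($j_0=-b_r<0$), or an isolated corner ($a_r=b_r=0$, which forces $r=k$ and $\ell$-residue $0$). First I would use removability of $b$ to bound the inner hooks ($a_t\le a_r-2$, resp.\ $b_t\le b_r-2$, for all $t>r$), so that $w$ interacts only with the hooks indexed by $t<r$. In the leg-tip and isolated-corner cases, the coefficients in $Y(w)$ — and, in the corner case, the coefficient $\beta_r=\theta_0$ of $v_{r,0}$ in $x(1)$ — that could carry $w$ out of $\CM w$ are exactly $\theta_{j_0}=\theta_i$ and $\theta_0$, which vanish; hence $\CM w$ is a subrepresentation on which every arrow acts by $0$, i.e.\ $\CM w\cong L(\alpha_i)$, and the quotient carries the remaining basis and is isomorphic to $A_{\mu\backslash b}$, because the only structure constant that could change is $\beta_r$ and $\beta'_r=\beta_r-\theta_{j_0}=\beta_r$. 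This produces the exact sequence $0\to L(\alpha_i)\to A_\mu\to A_{\mu\backslash b}\to 0$. In the arm-tip case one argues symmetrically: once $\theta_{j_0}=0$ nothing maps onto $w$, so the span $W'$ of the remaining basis vectors is a subrepresentation, again $\cong A_{\mu\backslash b}$ since $\beta'_r=\beta_r$, with quotient $A_\mu/W'\cong L(\alpha_i)$, which is the other sequence. The work is the (routine but lengthy) check that the maps restrict or descend and that the structure constants match those of $A_{\mu\backslash b}$.

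For part $(a)$, let $0\ne U\subseteq A_\mu$ be a subrepresentation; I want $U=A_\mu$. Since $\dim V_\infty=1$, either $U_\infty=V_\infty$ or $U_\infty=0$. If $U_\infty=V_\infty$, then $U$ contains $x(1)=-\sum_r\beta_r v_{r,0}$, so it suffices to show that the subrepresentation generated by $x(1)$ — which is graded for the $\ZM$-grading $\deg v_{r,j}=j$ coming from $\CM^\times$-fixedness of $[A_\mu]$, since $x(1)$ is homogeneous — exhausts $V$; the plan is to produce every $v_{r,j}$ from $x(1)$ by applying $X$ and $Y$: $X$ steps one box down a hook, while the cross-terms $\beta_t v_{t,\bullet}$ ($t<r$) and the partial $\theta$-sums occurring in $Y$ first separate the hooks from one another and then climb back up each of them. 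This is where the $J$-core hypothesis enters, being precisely the condition that makes all the $\beta_r$ and all the partial sums met along the way nonzero. If $U_\infty=0$, then $y$ vanishes on $U_0$ and one concludes $U=0$ by a dual argument (passing to the dual representation, which is again of this form, for the transposed partition, and which interchanges the two cases).

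The main obstacle is this generation step: turning "$\mu$ is a $J$-core" into nonvanishing of exactly the structure constants needed to move $x(1)$ across all the hooks. Concretely it reduces to showing that for a $J$-core $\mu$ and $J$-standard $\theta$ one has $\beta_r\ne0$ for every $r$, and that the sums $\sum_{i=-b_r}^{j}\theta_i$ and $\sum_{i=j+1}^{a_r}\theta_i$ appearing in the definition of $Y$ do not vanish along the relevant paths — which should follow from the hook-length description of cores together with $J$-standardness of $\theta$ (the only vanishing sums of consecutive $\theta_i$'s being those forced by membership in $J$), but whose careful verification is the substantive part of the argument.
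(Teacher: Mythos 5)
Your reduction for part $(a)$ rests on a nonvanishing claim that is false, and this is a genuine gap. You propose to generate all of $A_\mu$ from $x(1)$ by applying $X$ and $Y$, and you reduce the problem to showing that, for a $J$-core $\mu$, all $\beta_r$ and all partial sums $\sum_{i=-b_r}^{j}\th_i$, $\sum_{i=j+1}^{a_r}\th_i$ "met along the way" are nonzero. The $\beta_r$ are indeed nonzero (that part of your reduction is exactly the paper's key computation), but the partial sums can vanish for a $J$-core: take $\ell=2$, $J=\{1\}$ (so $\th_1=0\neq\th_0$) and $\mu=(2,2)$, which is a $J$-core since its only removable box has residue $0$. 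Here hook $1$ has $a_1=b_1=1$, hook $2$ has $a_2=b_2=0$, and (reading the cross-terms of $Y(v_{r,j})$ as landing in degree $j+1$, as forced by $Y$ being a degree-one map) the coefficient of $v_{1,1}$ in $Y(v_{1,0})$ is $-\th_1=0$ and the coefficient of $v_{1,0}$ in $Y(v_{1,-1})$ is $\th_{-1}=\th_1=0$: you cannot "climb back up" hook $1$ at all. The vector $v_{1,1}$ is reached only through the cross-term $-\beta_1v_{1,1}$ of $Y(v_{2,0})$, i.e.\ by passing through the \emph{inner} hook. In general the sums you need vanish precisely at boxes whose residue lies in $J$ but which fail to be removable because an inner hook blocks them, and $J$-coreness does not exclude such boxes. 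So the generation argument would need a genuinely different inductive scheme, and the statement you propose to verify as "the substantive part" is simply not true. The paper avoids this entirely: it never proves generation from $x(1)$; instead, using Lemma \ref{lem:Sigma-th-simple} it observes that a non-simple $A_\mu$ must have a subrepresentation or a quotient isomorphic to some $L(\alpha_i)$, $i\in J$, uses the $\ZM$-grading to reduce a putative sub to a single basis vector $v_{t,j}$ on the innermost hook meeting degree $j$, and then contradicts non-removability of the corresponding box; the only nonvanishing input is $\beta_r\neq 0$, and the quotient case is handled by duality (your treatment of the $U_\infty=0$ case is at the same level of brevity as the paper's, so that step is not the issue).

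In part $(b)$ your arm-tip and leg-tip cases agree with the paper (leg tip: $\CM v_{r,-b_r}$ is a sub isomorphic to $L(\alpha_i)$ because $\th_{-b_r}=0$ and the inner hooks are too short to receive the cross-terms; arm tip: nothing maps onto $v_{r,a_r}$, so the complementary span is a copy of $A_{\mu\backslash b}$). But your isolated-corner case ($a_r=b_r=0$, so $r=k$ and $j=0$) is wrong as stated: $\CM v_{k,0}$ is \emph{not} a subrepresentation, since $y(v_{k,0})=1\neq 0$ and $Y(v_{k,0})=-\sum_{t<k}\beta_tv_{t,\bullet}$ involves the $\beta_t$ of the outer hooks, which are not values $\th_i$ with $i\in J$ and need not vanish (they are nonzero whenever $\mu$ is a $J$-core and $k\geqslant 2$). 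The correct conclusion there is the \emph{other} exact sequence: since $\beta_k=\th_0=0$, the vector $x(1)$ and all arrows avoid $v_{k,0}$, so the span of the remaining basis vectors is a subrepresentation isomorphic to $A_{\mu\backslash b}$ with quotient $L(\alpha_0)$ — this is exactly how the paper treats every box of nonnegative $\infty$-residue. Since the lemma allows either sequence this slip is fixable, but as written your argument for that case fails.
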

\begin{proof}
First, we prove $b)$. Assume that $b$ is the box as in the statement.  Assume that it is in the $r$th hook. Let $j$ be the $\infty$-residue of $b$.

Assume first $j<0$. We have $X(v_{r,j})=Y(v_{r,j})=0$.  Then the vector $v_{r,j}$ spans a subrepresentation isomorphic to $L(\alpha_i)$. We get a short exact sequence
$$
0\to  L(\alpha_j)\to A_{\mu}\to A_{\mu\backslash b}\to 0.
$$

Now, assume $j\geqslant 0$. Then we see that $A_{\mu\backslash b}$ is a subrepresentation of $A_{\mu}$. It is spanned by all basis vectors except $v_{r,j}$.
Then we have a short exact sequence
$$
0\to A_{\mu\backslash b}\to A_{\mu}\to  L(\alpha_j)\to 0.
$$

\medskip
Now, let us prove $a)$. First of all, we note that the assumption that $\theta$ is $J$-standard implies that if for some $a,b\in \ZM$, $a\leqslant b$ we have $\theta_a+\theta_{a+1}+\ldots+\theta_{b-1}+\theta_b=0$, then we have $\theta_a=\theta_{a+1}=\ldots=\theta_{b-1}=\theta_b=0$. 
If $\mu$ is a $J$-core, then the numbers $\beta_1, \beta_2,\ldots,\beta_k$ are nonzero. Indeed, if some $\beta_r$ is zero, then $\beta_k$ is also zero. Then the $\ell$-residues of all boxes of the $k$h hook are in $J$. In particuler, the $k$th hook contatins a removable box whose residue is in $J$. This contradicts to the fact that $\mu$ is a $J$-core.

In view of Lemma \ref{lem:Sigma-th-simple}, if the representation $A_\mu$ is not simple, then it must either contain a subrepresentation of the form $L(\alpha_i)$, or it must have a quotient of the form $L(\alpha_i)$. Let us show that both situations are impossible when $\mu$ is a $J$-core.

Assume that $A_\mu$ has a subrepresentation isomorphic to $L(\alpha_i)$. Let $v$ be a vector that spans this subrepresentation. We can write $v=\sum\limits_{j\in \ZM, j\equiv i \mod\ell}v_j$, where $v_j\in V_j$. Take $j$ in this decomposition such that $v_j\ne 0$. Then the vector $v_j$ also spans a subrepresentation of $A_\mu$ isomorphic to $L(\alpha_i)$.

Let $t$ be the number of boxes of $\mu$ with the $\infty$-residue $j$. Write $v_j=\sum_{r=1}^t\lambda_r v_{r,j}$. Then $X(v)=0$ is only possible when $\lambda_1=\ldots=\lambda_{t-1}=0$, so the vector $v_{t,j}$ spans $L(\alpha_i)$.

Assume $j<0$.  Since the box $b$ corresponding to the vector $v_{t,j}$ cannot be removable, the diagram of $\mu$ either contains the box below $b$ or the box on the right of $b$. In the first case we must have $X(v_{t,j})\ne 0$ and in the second case we must have $Y(v_{t,j})\ne 0$. This is a contradiction.

Assume $j>0$. Then $X(v_{t,j})\ne 0$. This is a contradiction.

Assume $j=0$. Then, since $\beta_1\ne 0$, $Y(v_{t,0})\ne 0$ is only possible for $t=1$. However, this implies that $\mu$ contains only one hook (i.e., we have $k=1$). Since the box $b$ corresponding to the vector $v_{1,0}$ cannot be removable, the diagram of $\mu$ either contains the box below $b$ or the box on the right of $b$. The first case is not possible because it implies $X(v_{1,0})\ne 0$. In the second case we must have $\theta_1+\theta_2+\ldots+\theta_{a_1}=0$. However, this implies $\theta_{a_1}=0$ and then the unique box with $\infty$-residue $a_1$ is removable. This is a contradiction.

Now, assume that $A_\mu$ has a quotient isomorphic to $L(\alpha_i)$. Then the dual representation $A_\mu^*$ contains a submodule isomorphic to $L(\alpha_i)$. An argument as above show that this is impossible if $A_\mu$ is a $J$-core.

\end{proof}

Denote by $A'_\mu$ the semisimplification of $A_\mu$, i.e., $A'_\mu$ is the direct sum of the Jordan-H\"older subquotients of $A_\mu$.

\begin{coro} 
\label{coro:structure-Amu}
	Assume $\mu\in \PC$ and set $\lambda=\Core_J(\mu)$. Then the representation $A'_\mu$ has the following decomposition in a direct sum of simple representaions
	$$
	A'_\mu=A_\lambda\oplus \bigoplus_j L(\alpha_j),
	$$
where the sum is taken by the multiset of $\ell$-residues of $\mu\smallsetminus\lambda$. 
\end{coro}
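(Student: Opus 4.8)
The plan is to argue by induction on the number $|\mu|-|\lambda|$ of boxes of $\mu\smallsetminus\lambda$, using Lemma~\ref{lem:structure-Amu} as the engine. The base case is $\mu=\lambda$, that is, $\mu$ is itself a $J$-core: then $\mu\smallsetminus\lambda=\varnothing$ and $A_\mu=A_\lambda$ is simple by Lemma~\ref{lem:structure-Amu}$(a)$, so $A'_\mu=A_\lambda$, which is exactly the claimed decomposition (with an empty sum).

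For the inductive step, assume $\mu$ is not a $J$-core and pick a $J$-removable box $b$ of $\mu$; let $i\in J$ be its $\ell$-residue. By Lemma~\ref{lem:structure-Amu}$(b)$ we have one of the two short exact sequences
$$
0\to L(\alpha_i)\to A_\mu\to A_{\mu\backslash b}\to 0
\qquad\text{or}\qquad
0\to A_{\mu\backslash b}\to A_\mu\to L(\alpha_i)\to 0 ,
$$
so in either case $A_\mu$ and $A_{\mu\backslash b}\oplus L(\alpha_i)$ have the same multiset of Jordan--H\"older constituents, hence the same semisimplification: $A'_\mu\simeq A'_{\mu\backslash b}\oplus L(\alpha_i)$. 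Now $\mu\backslash b$ is a partition with $\Core_J(\mu\backslash b)=\Core_J(\mu)=\lambda$ --- the $J$-core being independent of the order in which $J$-removable boxes are deleted --- and $|\mu\backslash b|-|\lambda|=|\mu|-|\lambda|-1$. By the induction hypothesis, $A'_{\mu\backslash b}=A_\lambda\oplus\bigoplus_{j}L(\alpha_j)$, where $j$ runs over the multiset of $\ell$-residues of $(\mu\backslash b)\smallsetminus\lambda$. Since $\mu\smallsetminus\lambda$ is the disjoint union of $(\mu\backslash b)\smallsetminus\lambda$ with the single box $b$, of residue $i$, adding the extra summand $L(\alpha_i)$ produces exactly $A'_\mu=A_\lambda\oplus\bigoplus_j L(\alpha_j)$ with $j$ ranging over the multiset of $\ell$-residues of $\mu\smallsetminus\lambda$.

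There is no genuine obstacle: all the representation-theoretic work has been front-loaded into Lemma~\ref{lem:structure-Amu}. The only points deserving a word of care are, first, that every $\ell$-residue appearing in $\mu\smallsetminus\lambda$ lies in $J$, so that the symbols $L(\alpha_j)$ are meaningful in view of Lemma~\ref{lem:Sigma-th-simple}; this holds because each such box is $J$-removable at the stage of the reduction from $\mu$ to $\lambda$ at which it is removed. Second, the additivity of semisimplification along short exact sequences, which is standard. Thus the corollary is a purely combinatorial consequence of part $(b)$ of Lemma~\ref{lem:structure-Amu} together with the order-independence of $\Core_J$.
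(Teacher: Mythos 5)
Your argument is correct and is exactly the reasoning the paper intends (the corollary is stated without proof, as an immediate consequence of Lemma \ref{lem:structure-Amu}): iterate part $(b)$ along a sequence of $J$-removable boxes from $\mu$ down to $\lambda$, noting that semisimplification is additive on short exact sequences and that $\Core_J$ is independent of the order of removals, and finish with part $(a)$ for the base case. Nothing to add.
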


\begin{defi}
	We say that the representation $(X,Y,x,y)$ of $\widehat Q_\ell$ is $\ZM$-\emph{gradable} if it is isomorphic to the image by $\iota$ (see Definition \ref{def:map-i-inf}) of some representaion $L$ of $\widehat Q_\infty$. In this case we say that $L$ is a \emph{graded lift} of $(X,Y,x,y)$.
\end{defi}

A  $\ZM$-gradable representation yields a $\CM^\times$-fixed point in $\XC_\th(\db)$.

\begin{lem}
Assume that $(X,Y,x,y)$ is simple and $\ZM$-gradable. Then its $\ZM$-grading is unique. 
\end{lem}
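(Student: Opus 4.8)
The plan is to encode a $\ZM$-grading on $M:=(X,Y,x,y)$ as a cocharacter $\rho\colon\CM^\times\to\Gb\Lb(\db)$ of a prescribed shape, and then to use simplicity of $M$ to show that this cocharacter is uniquely determined.

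First I would unwind the definition of a graded lift. To give a graded lift of $M$ is to give a $\ZM$-grading $\bigoplus_{j\in\ZM}U_j$ of the direct sum of the vertex spaces of $M$ other than the one at $\infty$, refining the vertex decomposition (so that the vertex-$i$ space equals $\bigoplus_{j\equiv i\bmod\ell}U_j$), and such that $X$ lowers the degree by $1$, $Y$ raises it by $1$, the map $x$ sends the space at $\infty$ into $U_0$, and $y$ vanishes on $U_j$ for every $j\ne 0$. Because such a grading refines the vertex decomposition, it is precisely the datum of a cocharacter $\rho\colon\CM^\times\to\Gb\Lb(\db)$ acting on $U_j$ by $\xi^{j}$, and the four conditions above translate into
$$\rho(\xi)X\rho(\xi)^{-1}=\xi^{-1}X,\qquad \rho(\xi)Y\rho(\xi)^{-1}=\xi Y,\qquad \rho(\xi)x=x,\qquad y\,\rho(\xi)^{-1}=y$$
for all $\xi\in\CM^\times$; equivalently, $\rho$ lifts the $\CM^\times$-action at the point $[M]\in\XC_\th(\db)$ to the group $\Gb\Lb(\db)$. (In passing this re-proves that a $\ZM$-gradable representation yields a $\CM^\times$-fixed point.)

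Then I would compare two graded lifts. Let $\rho$ and $\rho'$ be the cocharacters attached to two graded lifts of $M$, and set $g_\xi:=\rho(\xi)^{-1}\rho'(\xi)\in\Gb\Lb(\db)$. Using the displayed relations one checks that $g_\xi$ commutes with $X$ and $Y$ and satisfies $g_\xi x=x$ and $y\,g_\xi=y$; hence $g_\xi$, extended by the identity on the one-dimensional space at the vertex $\infty$, is an endomorphism of $M$ in $\Rep(\widehat Q_\ell)$. This is where simplicity enters: since $M$ is simple and $\CM$ is algebraically closed, $\End(M)=\CM\cdot\Id$ by Schur's lemma, and since $g_\xi$ acts as the identity on the nonzero space at $\infty$, the corresponding scalar is $1$. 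Thus $g_\xi=\Id$ for all $\xi$, so $\rho=\rho'$; as the degree-$j$ component of the grading is the $\xi^j$-eigenspace of $\rho(\xi)$ (for $\xi$ not a root of unity), the two $\ZM$-gradings of $M$ agree.

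The only delicate point is the dictionary set up in the second paragraph: one must verify that a graded lift is the same thing as a cocharacter of $\Gb\Lb(\db)$ satisfying the displayed weight conditions. This is a direct unwinding of Definition \ref{def:map-i-inf}: by construction a graded lift is a $\ZM$-grading refining the vertex decomposition, with $X,Y$ homogeneous of degrees $-1$ and $+1$ and the framing maps $x,y$ homogeneous of degree $0$. If one prefers not to invoke $\iota$, the fact that the grading refines the vertex decomposition can be recovered from simplicity alone: given any $\ZM$-grading for which $X,Y,x,y$ are homogeneous of the stated degrees, the sum of those degree-$j$ pieces that lie in the vertex-$(j\bmod\ell)$ space, together with the space at $\infty$, is a nonzero subrepresentation, hence equals $M$. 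The genuine content of the argument is the rigidity in the third paragraph: simplicity of $M$ together with one-dimensionality of the framing forces the stabilizer of $M$ in $\Gb\Lb(\db)$ to be trivial.
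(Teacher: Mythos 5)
Your proof is correct, but it runs along a genuinely different line than the paper's. The paper argues directly with a spanning set: since $a\ne 0$ (so $x(1)\ne 0$), the vector $v=x(1)$ must sit in degree $0$, and simplicity forces the representation to be spanned by the vector at $\infty$ together with the monomials $X^{a_1}Y^{b_1}\cdots X^{a_k}Y^{b_k}(v)$, each of which has its degree prescribed by the word; hence the graded pieces are determined. You instead encode a graded lift as a cocharacter $\rho\colon\CM^\times\to\Gb\Lb(\db)$ with the stated weight conditions, observe that for two lifts the element $g_\xi=\rho(\xi)^{-1}\rho'(\xi)$ commutes with $X,Y$ and is compatible with $x,y$, hence is an endomorphism of the simple representation fixing the one-dimensional framing space, and conclude $g_\xi=\Id$ by Schur's lemma. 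Both arguments hinge on simplicity, but in different ways: the paper uses cyclic generation from $x(1)$ (and explicitly invokes $a\ne 0$ there), while you use $\End(M)=\CM\,\Id$, i.e.\ triviality of the stabilizer of a simple framed representation in $\Gb\Lb(\db)$; your route needs no hypothesis beyond simplicity and the one-dimensional framing, is perhaps more conceptual, and reproves in passing that gradable representations give $\CM^\times$-fixed points, whereas the paper's computation is more concrete and exhibits the grading explicitly on generators. One small caveat: your optional aside claiming that compatibility of the grading with the vertex decomposition can be ``recovered from simplicity alone'' is both unnecessary (the paper defines a graded lift via $\iota$, which builds the refinement of the vertex grading into the definition) and not quite right as sketched, since the subspace you describe (the sum of the degree pieces contained in the matching vertex spaces, plus the space at $\infty$) need not be stable under $X$ and $Y$ when the grading fails to refine the vertex decomposition; I would simply delete that remark and rely on the definition, as the main argument already does.
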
 
\begin{proof}
Since we assume $a\ne 0$, the vector $v=x(1)$ must be nonzero (here $1$ is a vector spanning the $\infty$-component of the representration, which is isomorphic to $\CM$). Then $v$ should be in $\ZM$-degree $0$. Since the representation is simple, the vectors of the form $X^{a_1}Y^{b_1}\ldots X^{a_k}Y^{b_k}(v)$ and the vector $1$ span the representation. But then vector $X^{a_1}Y^{b_1}\ldots X^{a_k}Y^{b_k}(v)$ must be in $\ZM$-degree $b_1-a_1+\ldots+b_k-a_k$. This shows that the $\ZM$-grading is unique. 
\end{proof}

\begin{exemple}
\label{ex:Res-Amu}
If $\mu$ is a $J$-core, then the representration $A_\mu$ is simple. It is $\ZM$-gradable by construction. Its graded lift $A^\infty_\mu$ is unique. The $\ZM$-graded dimension of the graded lift $A^\infty_\mu$ is $\Res_\infty(\mu)$.
\end{exemple}

\begin{coro} 
\label{coro:isom-if-same-Jcore}
For $\mu_1,\mu_2\in \PC_\nu[n\ell+|\nu|]$, the repsesentations $A'_{\mu_1}$ and $A'_{\mu_2}$ are isomorphic if and only if $\mu_1$ and $\mu_2$ have the same $J$-cores.
\end{coro}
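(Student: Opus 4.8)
The engine of the proof is Corollary~\ref{coro:structure-Amu}: for any partition $\mu$, writing $\lambda=\Core_J(\mu)$, one has $A'_\mu\cong A_\lambda\oplus\bigoplus_j L(\alpha_j)$, where the second sum runs over the multiset of $\ell$-residues of $\mu\smallsetminus\lambda$, that is, over the vector $\Res_\ell(\mu)-\Res_\ell(\lambda)\in\ZM_{\geqslant 0}^{\ZM/\ell\ZM}$. Since $\lambda$ is a $J$-core, $A_\lambda$ is simple by Lemma~\ref{lem:structure-Amu}$(a)$, and the $L(\alpha_j)$ are simple, so $A'_\mu$ is a direct sum of simple objects of $\Rep(\hQ_\ell)$; moreover $A_\lambda$ is the unique indecomposable summand of dimension $1$ at the vertex $\infty$ (equivalently, $A_\lambda$ is the $\rdim$-part of $A'_\mu$), because each $L(\alpha_j)$ has dimension $0$ there and the total $\infty$-dimension of $A'_\mu$ is $1$.

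For the ``if'' direction, suppose $\lambda:=\Core_J(\mu_1)=\Core_J(\mu_2)$. Then the two decompositions above literally share the summand $A_\lambda$, and it remains to see that the multisets $\Res_\ell(\mu_1)-\Res_\ell(\lambda)$ and $\Res_\ell(\mu_2)-\Res_\ell(\lambda)$ agree, i.e.\ that $\Res_\ell(\mu_1)=\Res_\ell(\mu_2)$. But this is precisely Remark~\ref{rem:same-lcores}, since $\mu_1,\mu_2\in\PC_\nu[n\ell+|\nu|]$ have the same $\ell$-core $\nu$ and the same size. Hence $A'_{\mu_1}\cong A'_{\mu_2}$.

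For the ``only if'' direction, set $\lambda_i=\Core_J(\mu_i)$ and assume $A'_{\mu_1}\cong A'_{\mu_2}$. The category of finite-dimensional representations of $\hQ_\ell$ is Krull--Schmidt, so the multisets of (simple) indecomposable summands of $A'_{\mu_1}$ and $A'_{\mu_2}$ coincide; matching the unique summand of $\infty$-dimension $1$ on each side gives $A_{\lambda_1}\cong A_{\lambda_2}$. Now $A_{\lambda_1}$ and $A_{\lambda_2}$ are simple and $\ZM$-gradable, and by the uniqueness of the $\ZM$-grading of a simple $\ZM$-gradable representation any isomorphism $A_{\lambda_1}\longiso A_{\lambda_2}$ automatically respects the gradings; therefore the graded lifts satisfy $A^\infty_{\lambda_1}\cong A^\infty_{\lambda_2}$, and comparing $\ZM$-graded dimensions (Example~\ref{ex:Res-Amu}) yields $\Res_\infty(\lambda_1)=\Res_\infty(\lambda_2)$. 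Finally $\Res_\infty$ is injective on $\PC$: the multiset of contents $s-r$ of the boxes of a Young diagram determines the diagram, because it recovers the non-increasing function $c\mapsto\#\{r\geqslant 1:\lambda_r-r\geqslant c\}$ and hence the $\beta$-set $\{\lambda_r-r:r\geqslant 1\}$. Thus $\lambda_1=\lambda_2$, i.e.\ $\mu_1$ and $\mu_2$ have the same $J$-core.

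The bookkeeping with Corollary~\ref{coro:structure-Amu} and Remark~\ref{rem:same-lcores} is routine; the one genuinely delicate step is passing from the abstract ungraded isomorphism $A_{\lambda_1}\cong A_{\lambda_2}$ to the equality $\lambda_1=\lambda_2$. Here the $\ZM$-grading is indispensable: the ungraded dimension vector $\Res_\ell(\lambda_i)$ alone does not determine a $J$-core (for $\ell=2$ the partitions $(2)$ and $(1,1)$ are both $\{0\}$-cores with the same $\ell$-residue), so one really must lift to $\hQ_\infty$ using the uniqueness-of-grading lemma and invoke injectivity of $\Res_\infty$.
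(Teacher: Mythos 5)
Your proof is correct and follows essentially the same route as the paper: both directions rest on the decomposition of Corollary \ref{coro:structure-Amu}, with the ``only if'' direction reduced to $A_{\lambda_1}\cong A_{\lambda_2}$ and then to $\Res_\infty(\lambda_1)=\Res_\infty(\lambda_2)$ via the uniqueness of the $\ZM$-grading (Example \ref{ex:Res-Amu}), and the ``if'' direction using Remark \ref{rem:same-lcores} to see that $\mu_1\smallsetminus\lambda_1$ and $\mu_2\smallsetminus\lambda_2$ have the same residues --- you merely make explicit the Krull--Schmidt matching of the summand of $\infty$-dimension $1$ and the injectivity of $\Res_\infty$, both of which the paper uses tacitly. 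One minor slip in your added justification of that injectivity: the number of boxes of content $c$ equals $\#\{r\geqslant 1:\lambda_r-r\geqslant c\}$ only for $c\geqslant 0$ (for $c<0$ it is $\#\{s\geqslant 1:\lambda'_s-s\geqslant -c\}$), so the clean argument recovers the Frobenius coordinates from the nonnegative and negative contents separately rather than the full $\beta$-set; the fact itself is standard and the proof stands.
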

\begin{proof}
Let $\lambda_1$ and $\lambda_2$ be the $J$-cores of $\mu_1$ and $\mu_2$ respectively.

Assume that $A'_{\mu_1}$ and $A'_{\mu_2}$ are isomorphic. We see from Corollary \ref{coro:structure-Amu} that the representaions $A_{\lambda_1}$ and $A_{\lambda_2}$ are also isomorphic. Now, Example \ref{ex:Res-Amu} implies $\Res_\infty(\lambda_1)=\Res_\infty(\lambda_2)$, this yields $\lambda_1=\lambda_2$. 

Now, assume that we have $\lambda_1=\lambda_2$. Since we have $\mu_1,\mu_2\in \PC_\nu[n\ell+|\nu|]$, the partitions $\mu_1$ and $\mu_2$ have the same residues equal to $\Res_\ell(\nu)+n\delta_\ell$. Then $\mu_1\smallsetminus\lambda_1$ and $\mu_2\smallsetminus\lambda_2$ have the same residues. Then Corollary \ref{coro:structure-Amu} implies that $A'_{\mu_1}$ and $A'_{\mu_2}$ are isomorphic.
	
\end{proof}

\begin{rema}
For each partition $\mu$, we have a $\CM^\times$-fixed point $[A'_\mu]\in \XC_\th(\Res_\ell(\mu))$ presented by the representation $A'_\mu$. 
	
Set $\db=\Res_\ell(\nu)+n\delta_\ell$. Assume $\db\in E_\th$, see Remark \ref{rk:couple-equiv-nd}. By \cite[Prop.~8.3~(i)]{gordon quiver}, the $\CM^\times$-fixed points in $\XC_\th(\db)$ are parameterized by $J$-cores of elements of $\PC_\nu[n\ell+|\nu|]$. On the other hand, we have already constructed the same number of $\CM^\times$-fixed points $[A'_\mu]$ for $\mu\in\PC_\nu[n\ell+|\nu|]$, see Corollary \ref{coro:isom-if-same-Jcore}.

This implies that each $\CM^\times$-fixed point in $\XC_\th(\db)$ is of the form $[A'_\mu]$.
\end{rema}

\subsection{Parameterization of symplectic leaves}

\label{subs:param-sympl}

\begin{lem}
\label{lem:equiv-simple-nd}
The following conditions are equivalent.

$(a)$ The pair $(\db,\th)$ is equivalent to a pair of the form $(n\delta_\ell,\theta')$ with $n\geqslant 0$.

$(b)$ We have $\db\in E_\th$.
\end{lem}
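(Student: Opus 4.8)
The plan is to treat the two implications separately. The implication $(b)\Rightarrow(a)$ is already recorded in Remark~\ref{rk:couple-equiv-nd}: it is Crawley--Boevey's result \cite[Lemma~7.2]{CB} that if there is a simple representation of $\hQ_\ell$ of dimension vector $\widehat\db$ with moment map value $\Irm_\th$, then $(\db,\th)$ is $\sim$-equivalent to some $(n\delta_\ell,\th')$ with $n\geqslant 0$. So only $(a)\Rightarrow(b)$ needs an argument.

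For $(a)\Rightarrow(b)$ the first step is to observe that membership ``$\db\in E_\th$'' depends only on the $\sim$-class of $(\db,\th)$. Indeed, for $\th_j\neq 0$ the isomorphism \eqref{eq:iso-lusztig} is, following the reflection functor construction of \cite{lusztig} (ultimately that of Crawley--Boevey--Holland), induced by an equivalence of categories between $\Rep_\th(\hQ_\ell)$ and $\Rep_{s_j(\th)}(\hQ_\ell)$ which preserves simplicity and carries the dimension vector $\widehat\db$ to $\widehat{s_j(\db)}$; note that the vertex $\infty$ is untouched and that the term $\delta_{j0}$ in the non-linear action of $s_j$ on $\ZM^{\ZM/\ell\ZM}$ is precisely the contribution of the framing arrows at the vertex $0$. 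Iterating, $\db\in E_\th$ if and only if $s_j(\db)\in E_{s_j(\th)}$, and therefore $(a)$ reduces $(b)$ to the assertion that $n\delta_\ell\in E_{\theta'}$ for every $\theta'$ with $a=-\Sigma(\theta')\neq 0$ and every $n\geqslant 0$.

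It thus remains to produce a simple representation of $\hQ_\ell$ of dimension $\widehat{n\delta_\ell}$ satisfying the moment map relations for $\Irm_{\theta'}$. By Proposition~\ref{prop:quiver-cm} the variety $\XC_{\theta'}(n\delta_\ell)$ is a Calogero--Moser space, hence irreducible (\cite{CB2}) of dimension $2n$ (Lemma~\ref{lem:dim-CM}); by Proposition~\ref{prop:can-decomp-of-d} its smooth locus is the dense leaf $\LG^{n\delta_\ell}_{\db_0}$ attached to the canonical decomposition $n\delta_\ell=\db_0+\db_1$, with $\db_0\in E_{\theta'}$, $\db_1\in\Sigma\Sigma_{\theta'}$, and a point of this leaf is presented by $L(\db_1)\oplus M_0$ with $M_0$ simple of dimension $\widehat{\db_0}$. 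So everything comes down to showing $\db_1=0$, i.e. that the generic Calogero--Moser point is itself a simple framed representation. I expect this to be the only real obstacle; two routes are available. One can run Crawley--Boevey's numerical criterion \cite[Theorem~1.2]{CB} directly on the framed dimension vector $\widehat{n\delta_\ell}$ of $\hQ_\ell$: since $a\neq 0$ excludes multiples of $\delta_\ell$, every root of $\Qov_\ell$ orthogonal to $\theta'$ is real and hence contributes $0$ to the relevant quantity, while $\widehat{n\delta_\ell}$ contributes $n$, and a short bookkeeping over all nontrivial decompositions yields the required strict inequality, so $\widehat{n\delta_\ell}$ is a simple dimension vector. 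Alternatively, and more conceptually, one invokes that the rational Cherednik algebra $\Hb_c$ is generically Azumaya over $\Zb_c$ (the Azumaya locus of $\ZCB_c$ is dense), so the generic point of $\ZCB_c\simeq\XC_{\theta'}(n\delta_\ell)$ supports a single simple $\Hb_c$-module, of dimension $|W|$, which on the quiver side is exactly a simple representation of $\hQ_\ell$ of dimension $\widehat{n\delta_\ell}$; either way $\db_1=0$ and the proof is complete.
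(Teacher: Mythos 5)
Your proof is correct, but it settles the key point by a genuinely different route than the paper. Both treatments dispose of $(b)\Rightarrow(a)$ via Remark~\ref{rk:couple-equiv-nd} and reduce $(a)\Rightarrow(b)$ to the case $\db=n\delta_\ell$ because the isomorphisms \eqref{eq:iso-lusztig} preserve simple representations. From there the paper argues indirectly: writing $n\delta_\ell=\db_0+\db_1$ as in Proposition~\ref{prop:can-decomp-of-d}, if $\db_0\neq n\delta_\ell$ then $(\db_0,\th)$ satisfies $(b)$, hence $(a)$, so it is equivalent to some $(n'\delta_\ell,\th')$ with $n'<n$, and the chain $\XC_\th(n\delta_\ell)\simeq\XC_\th(\db_0)\simeq\XC_{\th'}(n'\delta_\ell)$ contradicts Lemma~\ref{lem:dim-CM}; no root-theoretic computation beyond the quoted results is needed. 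You instead verify directly, via Crawley--Boevey's criterion \cite[Theorem~1.2]{CB} applied to $\hQ_\ell$ (completing $\th'$ by the value at the vertex $\infty$ forced by orthogonality with $\widehat{n\delta_\ell}$), that $\widehat{n\delta_\ell}$ is the dimension vector of a simple representation whenever $a\neq 0$; this is more self-contained and makes explicit that the generic point of $\XC_{\th'}(n\delta_\ell)$ is itself a simple framed representation, at the cost of the root-theoretic bookkeeping --- and that bookkeeping is where the real content sits, so two precisions are in order. In a proper decomposition $\widehat{n\delta_\ell}=(e_\infty+\gamma)+\sum_i\beta_i$, the summands $\beta_i$ supported on $\Qov_\ell$ are indeed real (imaginary roots $r\delta_\ell$ are excluded since $\th'\cdot r\delta_\ell=-ra\neq 0$) and contribute $p=0$, where $p(\alpha)=1-\frac{1}{2}(\alpha,\alpha)$; but the summand $e_\infty+\gamma$ carrying the framing vertex need not be real, and the inequality actually needed is $p(e_\infty+\gamma)=\gamma_0-\frac{1}{2}(\gamma,\gamma)\leqslant n-1<n=p(\widehat{n\delta_\ell})$, with $\gamma_0$ the coordinate of $\gamma$ at the vertex $0$ and $0\leqslant\gamma\lneq n\delta_\ell$; this follows from positive semidefiniteness of the affine Tits form (either $\gamma\in\ZM_{\geqslant 0}\delta_\ell$ with $\gamma\neq n\delta_\ell$, or $(\gamma,\gamma)\geqslant 2$), and one should also record that $\widehat{n\delta_\ell}$ is a positive root of $\hQ_\ell$, as the criterion requires. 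Finally, your second route through the Azumaya locus of $\Hb_c$ is not a proof as stated: Proposition~\ref{prop:quiver-cm} is only an isomorphism of varieties and does not match simple $\Hb_c$-modules with simple representations of $\hQ_\ell$, so that step would need independent justification; since the Crawley--Boevey argument suffices, this does not affect your conclusion.
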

\begin{proof}
$b)$ implies $a)$ by Remark \ref{rk:couple-equiv-nd}.

Now, let us prove that $a)$ implies $b)$. Assume that $(\db,\th)$ satisfies $a)$. Since, the isomorphism \eqref{eq:iso-lusztig} sends simple representations to simple representations by construction, it is enough to assume $\db=n\delta_\ell$. Let $\db_0$ be associated to $\db=n\delta_\ell$ and $\th$ as in Proposition \ref{prop:can-decomp-of-d}. Then $b)$ is equivalent to $\db_0=\db$. 

Assume that we have $\db_0\ne \db$. Since, the couple $(\db_0,\th)$ satisfies $b)$, it also satisfies $a)$. So, it must be equivalent to some couple of the form $(n'\delta_\ell,\th')$. Since we have $\db_0-n'\delta_\ell\in \ZM_{\geqslant 0}^{\ZM/\ell\ZM}$ and $0\ne n\delta_\ell-\db_0\in \ZM_{\geqslant 0}^{\ZM/\ell\ZM}$,  we get $n>n'$.

Now, we get $\XC_{\th}(n\delta_\ell)\simeq \XC_{\th}(\db_0)$ by Proposition \ref{prop:can-decomp-of-d} $c)$ and we have $\XC_{\th}(\db_0)\simeq \XC_{\th'}(n'\delta_\ell)$ by \eqref{eq:iso-lusztig}. This is impossible because by Lemma \ref{lem:dim-CM} we have $\dim \XC_{\th}(n\delta_\ell)=2n$, $\dim \XC_{\th'}(n'\delta_\ell)=2n'$ and $n'<n$.
\end{proof}

\begin{lem}
\label{lem:descr-Eth}
Assume that $\theta$ is $J$-standard. Then we have $\db\in E_\th$ if and only if we have 
$$
\db=\Res_\ell(\nu)+r\delta_\ell
$$
with $r\geqslant 0$ and $\nu\in \CC_\ell\cap\CC_J$.
\end{lem}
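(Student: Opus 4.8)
The plan is to translate the statement into combinatorics of the affine Weyl group, through the canonical presentation \eqref{eq:d-via-core-n}, Lemma~\ref{lem:equiv-simple-nd} and Remark~\ref{rem:equiv=shortest}. Write $\db$ in its canonical form $\db=\Res_\ell(\nu)+r\delta_\ell$ with $\nu\in\CC_\ell$ and $r\in\ZM$, and let $w_\nu\in W^\aff_\ell$ be the shortest element with $w_\nu(\emptyset)=\nu$. Since $\th$ is $J$-standard, $W_\th=W_J$ is a standard parabolic subgroup, so Remark~\ref{rem:equiv=shortest} applies. I would first record two elementary facts. (i) Since $\Res_\ell\colon\CC_\ell\to\ZM^{\ZM/\ell\ZM}$ is $W^\aff_\ell$-equivariant and the twisted action commutes with translation by $\ZM\delta_\ell$, we have $w(\db)=w(\Res_\ell(\nu))+r\delta_\ell=\Res_\ell(w\cdot\nu)+r\delta_\ell$ for all $w\in W^\aff_\ell$. (ii) By Lemma~\ref{lem:nu-w-i} (via Remark~\ref{rem:W-act-lcores}), $\nu$ has a $j$-removable box iff $l(s_jw_\nu)<l(w_\nu)$; as $l$ is invariant under inversion this reads $l(w_\nu^{-1}s_j)<l(w_\nu^{-1})$, so $\nu\in\CC_J$ iff $w_\nu^{-1}$ is the shortest element of the coset $w_\nu^{-1}W_J$.

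For the implication $\Leftarrow$: given $r\geqslant 0$ and $\nu\in\CC_\ell\cap\CC_J$, take $w=w_\nu^{-1}$. By (ii) it is the shortest element in $wW_\th$, and by (i), $w(\db)=\Res_\ell(\emptyset)+r\delta_\ell=r\delta_\ell$. By Remark~\ref{rem:equiv=shortest}, $(\db,\th)$ is then equivalent to $(r\delta_\ell,w(\th))$, and since $r\geqslant 0$, Lemma~\ref{lem:equiv-simple-nd} gives $\db\in E_\th$.

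For the implication $\Rightarrow$: assume $\db\in E_\th$. By Lemma~\ref{lem:equiv-simple-nd} and Remark~\ref{rem:equiv=shortest} there is $w\in W^\aff_\ell$, the shortest element of $wW_J$, with $w(\db)=n\delta_\ell$ for some $n\geqslant 0$. By (i), $\Res_\ell(w\cdot\nu)+r\delta_\ell=n\delta_\ell=\Res_\ell(\emptyset)+n\delta_\ell$, so by uniqueness of the presentation \eqref{eq:d-via-core-n} we get $w\cdot\nu=\emptyset$ and $r=n\geqslant 0$. Since $w(\nu)=\emptyset=w_\nu^{-1}(\nu)$, the element $w_\nu w$ stabilises $\nu$, hence $w_\nu w\in\Stab_{W^\aff_\ell}(\nu)=w_\nu W_\ell w_\nu^{-1}$, so $w\in W_\ell w_\nu^{-1}$; write $w=vw_\nu^{-1}$ with $v\in W_\ell$. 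As $w_\nu^{-1}$ is the shortest element of $W_\ell w_\nu^{-1}$, we have $l(w)=l(v)+l(w_\nu^{-1})$. Now fix $j\in J$: if one had $l(w_\nu^{-1}s_j)<l(w_\nu^{-1})$, then $l(ws_j)\leqslant l(v)+l(w_\nu^{-1}s_j)=l(w)-1<l(w)$, contradicting that $w$ is shortest in $wW_J$. Hence $l(w_\nu^{-1}s_j)>l(w_\nu^{-1})$ for all $j\in J$, which by (ii) means $\nu\in\CC_J$.

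The only step that needs genuine care is this last length estimate: the content is that left multiplying the length-minimal coset representative $w_\nu^{-1}$ by $v\in W_\ell$ cannot absorb a right descent $s_j\in W_J$ of $w_\nu^{-1}$, and this follows purely from subadditivity of $l$ together with the length-additivity $l(vw_\nu^{-1})=l(v)+l(w_\nu^{-1})$. Everything else is formal bookkeeping with the two presentations of $W^\aff_\ell$ and the equivariance of $\Res_\ell$, where I do not anticipate hidden difficulties.
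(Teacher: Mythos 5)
Your proposal is correct and follows essentially the same route as the paper's proof: reduce via Lemma~\ref{lem:equiv-simple-nd} and Remark~\ref{rem:equiv=shortest} to a statement about minimal coset representatives, translate the $J$-core condition through Remark~\ref{rem:W-act-lcores} and Lemma~\ref{lem:nu-w-i}, and settle the forward direction by the length-subadditivity argument (the paper phrases it as $l(s_iw)<l(w)\Rightarrow l(xs_i)<l(x)$ for $x^{-1}=w\cdot(\text{element of }W_\ell)$, which is exactly your estimate written on the other side). The only differences are cosmetic: you make the equivariance of $\Res_\ell$ and the uniqueness of the presentation \eqref{eq:d-via-core-n} explicit to get $r=n\geqslant 0$, where the paper is terser.
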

\begin{proof}
The parabolic subgroup $W_J$ of $W^{\rm aff}_\ell$ is the stabilizer of $\theta$ in $W^{\rm aff}_\ell$.
Write $\db=\Res_\ell(\nu)+r\delta_\ell$ as in \eqref{eq:d-via-core-n}, we have $r\in\ZM$ and $\nu\in\CC_\ell$.

\medskip
Assume $\db\in E_\th$. Then Lemma \ref{lem:equiv-simple-nd} implies that $r\geqslant 0$ and that we can find $x\in W_\ell^\aff$ (see  Remark \ref{rem:equiv=shortest}) such that $x(\db)=r\delta_\ell$ and such that $x$ is the shortest element in the coset $xW_J\in W^{\rm aff}_\ell/W_J$.  

Let $w$ be the shortest element in $x^{-1}W_\ell$. We have $\nu=x^{-1}(\emptyset)=w(\emptyset)$. Assume that $\nu$ is not a $J$-core. Then we have $|s_i(\nu)|<|\nu|$ for some $i\in J$, this corresponds to the case $(3)$ in  Remark \ref{rem:W-act-lcores}. Then Lemma \ref{lem:nu-w-i} implies $l(s_iw)<l(w)$. Then we also have $l(s_ix^{-1})<l(x^{-1})$ or equivalently $l(xs_i)<l(x)$. This contradicts to the fact that $x$ is the shortes element in $xW_J$. Then $\nu$ must be a $J$-core.

\medskip
Now, assume that we have $\db=\Res_\ell(\nu)+r\delta_\ell$ for $r\geqslant 0$ and $\nu\in\CC_\ell\cap\CC_J$. Let $w$ be the element of $W^\aff_\ell$ such that $w(\emptyset)=\nu$ and such that $w$ is the shortest element in $wW_\ell$. It is inough to prove that $w$ in the shortest element in $W_Jw$. Indeed, if we prove this, then by Remark \ref{rem:equiv=shortest} we have $(\db,\th)\sim (w^{-1}(\db),w^{-1}(\th))=(r\delta_\ell,w^{-1}(\th))$ and then by Lemma \ref{lem:equiv-simple-nd} we have $\db\in E_\th$.

 Since $\nu$ is a $J$-core, for each $i\in J$ we have $|s_i(\nu)|\geqslant |\nu|$. This means that for each $i\in J$, we are either in the situation $(1)$ or in the situation $(2)$ of Remark \ref{rem:W-act-lcores}. In both cases Lemma \ref{lem:nu-w-i} yields $l(s_iw)>l(w)$. 

\end{proof}

\begin{rema}
\label{rem:dim-nd+nu}
Assume that $\th$ is $J$-standard and fix $\db\in E_\th$. By the lemma above, we can write $\db$ in the form $\db=\Res_\ell(\nu)+n\delta_\ell$ with $n\geqslant 0$ and $\nu\in \CC_\ell\cap\CC_J$. Then by Lemma \ref{lem:equiv-simple-nd}, the couple $(\db,\th)$ is equivalent to $(n\delta_\ell,\th')$ for some $\th'\in \CM^{\ZM/\ell\ZM}$. 
Then Lemma \ref{lem:dim-CM} implies that the variety $\XC_\th(\Res_\ell(\nu)+n\delta_\ell)$ is normal of dimension $2n$.
\end{rema}

We see that the elements of $E_\th$ are in bijection with the couples $(\nu,r)$ where $\nu$ is an $\ell$-core that is a $J$-core and $r\in \ZM_{\geqslant 0}$.

Assume that $\th$ is $J$-standard. Then we have a partial order $\succcurlyeq$ on $E_\th$ given by $\db\succcurlyeq\db'$ if $\db-\db'\in \sum_{j\in J} \ZM_{\geqslant 0} \alpha_j$. In other words, we have $\db\succcurlyeq\db'$ if and only if $\LG^\db_{\db'}\ne \emptyset$. Using the bijection above, we may consider the order $\succcurlyeq$ as an order on the set $(\CC_\ell\cap\CC_J)\times \ZM_{\geqslant 0}$. 

\begin{lem}
We have $(\nu_1,r_1)\succcurlyeq (\nu_2,r_2)$ if and only if we have $r_1\geqslant r_2$ and there exists a partition $\lambda\in \PC_{\nu_1}[|\nu_1|+\ell(r_1-r_2)]$ such that $\Core_J(\lambda)=\nu_2$.
\end{lem}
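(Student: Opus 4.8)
The plan is to reduce everything to the dictionary already set up between $\CM^\times$-fixed points of the relevant quiver varieties, their symplectic leaves, and $J$-cores. Throughout write $\db_i=\Res_\ell(\nu_i)+r_i\delta_\ell\in E_\th$ (Lemma~\ref{lem:descr-Eth}), so that by the definition of $\succcurlyeq$ together with Lemma~\ref{lem:Sigma-th-simple} the relation $(\nu_1,r_1)\succcurlyeq(\nu_2,r_2)$ means
\[
\db_1-\db_2=\Res_\ell(\nu_1)+(r_1-r_2)\delta_\ell-\Res_\ell(\nu_2)\in\Sigma\Sigma_\th=\sum_{j\in J}\ZM_{\geqslant 0}\alpha_j ,
\]
equivalently $\LG^{\db_1}_{\db_2}\neq\vide$. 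The key point is that $\db_1-\db_2$ only involves $r_1-r_2$; so, once we know $r_1\geqslant r_2$, setting $s=r_1-r_2$ and $\db=\Res_\ell(\nu_1)+s\delta_\ell\in E_\th$, the condition becomes simply that the leaf $\LG^{\db}_{\Res_\ell(\nu_2)}$ of $\XC_\th(\db)$ is non-empty (here $\Res_\ell(\nu_2)$ corresponds to the pair $(\nu_2,0)$ under $E_\th\leftrightarrow(\CC_\ell\cap\CC_J)\times\ZM_{\geqslant 0}$).

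First I would settle the inequality $r_1\geqslant r_2$ and the implication ``$\Leftarrow$''. If $(\nu_1,r_1)\succcurlyeq(\nu_2,r_2)$ then $\LG^{\db_1}_{\db_2}$ is a non-empty leaf of $\XC_\th(\db_1)$; by the normalisation lemma (the one preceding Corollary~\ref{coro:norm-cl-leaf}) its closure has normalisation $\XC_\th(\db_2)$, of dimension $2r_2$ by Remark~\ref{rem:dim-nd+nu}, while $\dim\XC_\th(\db_1)=2r_1$; hence $r_1\geqslant r_2$. Conversely, suppose $r_1\geqslant r_2$ and $\lambda\in\PC_{\nu_1}[\,|\nu_1|+\ell(r_1-r_2)]$ with $\Core_J(\lambda)=\nu_2$. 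Remark~\ref{rem:same-lcores} gives $\Res_\ell(\lambda)=\Res_\ell(\nu_1)+(r_1-r_2)\delta_\ell=\db$, so $[A'_\lambda]$ is a $\CM^\times$-fixed point of $\XC_\th(\db)$; by Corollary~\ref{coro:structure-Amu}, Lemma~\ref{lem:structure-Amu}$(a)$ and Example~\ref{ex:Res-Amu} its regular dimension vector is $\rdim A'_\lambda=\Res_\ell\bigl(\Core_J(\lambda)\bigr)=\Res_\ell(\nu_2)$, so $[A'_\lambda]\in\LG^{\db}_{\Res_\ell(\nu_2)}$ and this leaf is non-empty; unwinding the displayed equivalence gives $(\nu_1,r_1)\succcurlyeq(\nu_2,r_2)$.

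The substantive point is the implication ``$\Rightarrow$'', i.e. producing $\lambda$. Assume $(\nu_1,r_1)\succcurlyeq(\nu_2,r_2)$; we have already shown $r_1\geqslant r_2$, so $s=r_1-r_2\geqslant 0$, $\db=\Res_\ell(\nu_1)+s\delta_\ell\in E_\th$, and $\LG^{\db}_{\Res_\ell(\nu_2)}\neq\vide$. Now comes the one non-formal step: the pair $(\nu_2,0)$ has $r=0$, so by Remark~\ref{rem:dim-nd+nu} the variety $\XC_\th(\Res_\ell(\nu_2))$ is normal of dimension $0$; by the normalisation lemma $\LG^{\db}_{\Res_\ell(\nu_2)}$ is then a non-empty finite set of points, hence, being a $\CM^\times$-stable subset of $\XC_\th(\db)$ and $\CM^\times$ being connected, consists of $\CM^\times$-fixed points of $\XC_\th(\db)$. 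Pick one; by the classification of $\CM^\times$-fixed points of $\XC_\th(\db)$ at the end of \S\ref{subs:C-fixed-points} it equals $[A'_\mu]$ for some $\mu\in\PC_{\nu_1}[s\ell+|\nu_1|]$, and as above $\Res_\ell\bigl(\Core_J(\mu)\bigr)=\rdim A'_\mu=\Res_\ell(\nu_2)$.

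It remains to upgrade this residue equality to $\Core_J(\mu)=\nu_2$, after which $\lambda:=\mu$ does the job. Write $\Res_\ell(\Core_J(\mu))=\Res_\ell\bigl(\Core_\ell(\Core_J(\mu))\bigr)+t\delta_\ell$ with $t\geqslant 0$ (Remark~\ref{rem:same-lcores}); by Lemma~\ref{lem:lcore-of-Jcore} we have $\Core_\ell(\Core_J(\mu))\in\CC_\ell$, so both $\Res_\ell(\nu_2)$ and $\Res_\ell(\nu_2)-t\delta_\ell$ lie in the orbit $W^\aff_\ell\cdot 0$; since each $W^\aff_\ell$-orbit in $\ZM^{\ZM/\ell\ZM}$ contains a unique element of the form $n\delta_\ell$ \cite[Lem.~2.8]{BM} and $w(\db'+n\delta_\ell)=w(\db')+n\delta_\ell$, this forces $t=0$; thus $\Core_J(\mu)$ is an $\ell$-core, and since $\Res_\ell$ is injective on $\CC_\ell$ we conclude $\Core_J(\mu)=\nu_2$. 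I expect no real obstacle beyond the $0$-dimensionality trick that turns the relevant leaf into a set of $\CM^\times$-fixed points; everything else is bookkeeping with the correspondences already established in \S\ref{subs:C-fixed-points}.
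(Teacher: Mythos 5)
Your proof is correct and follows essentially the same route as the paper: the dimension/normalization argument for $r_1\geqslant r_2$, reduction to the $0$-dimensional leaf $\LG^{\Res_\ell(\nu_1)+(r_1-r_2)\delta_\ell}_{\Res_\ell(\nu_2)}$, identification of its point with some $[A'_\mu]$ via the $\CM^\times$-fixed point classification and $\rdim A'_\mu=\Res_\ell(\Core_J(\mu))$, and the converse by exhibiting $[A'_\lambda]$ in that leaf. The only difference is that you spell out two steps the paper leaves implicit (why the $0$-dimensional leaf consists of $\CM^\times$-fixed points, and why the residue equality forces $\Core_J(\mu)=\nu_2$), which is harmless.
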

\begin{proof}
Assume $(\nu_1,r_1)\succcurlyeq (\nu_2,r_2)$. Then we have $\dim \XC_\th(\Res_\ell(\nu_1)+r_1\delta_\ell)=2r_1$ and  $\dim \XC_\th(\Res_\ell(\nu_2)+r_2\delta_\ell)=2r_2$ by Remark \ref{rem:dim-nd+nu}. By Corollary \ref{coro:norm-cl-leaf} and its proof, the normalization of the closure of the symplectic leaf $\LG^{\Res_\ell(\nu_1)+r_1\delta_\ell}_{\Res_\ell(\nu_2)+r_2\delta_\ell}$ is isomorphic to $\XC_\th(\Res_\ell(\nu_2)+r_2\delta_\ell)$. In particular, 
$$
\dim\XC_\th(\Res_\ell(\nu_1)+r_1\delta_\ell)\geqslant\dim\LG^{\Res_\ell(\nu_1)+r_1\delta_\ell}_{\Res_\ell(\nu_2)+r_2\delta_\ell}
$$ 
implies $r_1\geqslant r_2$.

Now, $(\nu_1,r_1)\succcurlyeq (\nu_2,r_2)$ implies $\Res_\ell(\nu_1)+r_1\delta_\ell \succcurlyeq \Res_\ell(\nu_2)+r_2\delta_\ell$ and then $(\nu_1,r_1-r_2)\succcurlyeq (\nu_2,0)$. This means that the variety $\XC_\th(\Res_\ell(\nu_1)+(r_1-r_2)\delta_\ell)$ has a symplectic leaf $\LG^{\Res_\ell(\nu_1)+(r_1-r_2)\delta_\ell}_{\Res_\ell(\nu_2)}$. This simplectic leaf is $0$-dimensional, so it is a $\CM^\times$-fixed point. Then by \S \ref{subs:C-fixed-points}, this should be a point of the form $[A'_\lambda]$ for some $\lambda\in \PC_{\nu_1}[|\nu_1|+\ell(r_1-r_2)]$. By Corollary \ref{coro:structure-Amu} we have $\dim^\reg(A'_\lambda)=\Res_\ell(\Core_J(\lambda))$. Then $[A'_\lambda]\in \LG^{\Res_\ell(\nu_1)+(r_1-r_2)\delta_\ell}_{\Res_\ell(\nu_2)}$ implies $\Core_J(\lambda)=\nu_2$.

\medskip
Inversly, if $r_1\geqslant r_2$ and if there exists such a partition $\lambda$, then the $\CM^\times$-fixed point $[A'_\lambda]$ of $\XC_\th(\Res_\ell(\nu_1)+(r_1-r_2)\delta_\ell)$ is a simplectic leaf. Since $\dim^\reg(A'_\lambda)=\Res_\ell(\Core_J(\lambda))=\Res_\ell(\nu_2)$, this is the symplectic leaf $\LG^{\Res_\ell(\nu_1)+(r_1-r_2)\delta_\ell}_{\Res_\ell(\nu_2)}$. Then we have $(\nu_1,r_1-r_2)\succcurlyeq (\nu_2,0)$. This implies $\Res_\ell(\nu_1)+(r_1-r_2)\delta_\ell \succcurlyeq \Res_\ell(\nu_2)$ and then $(\nu_1,r_1)\succcurlyeq (\nu_2,r_2)$.
\end{proof}

Assume that $\theta$ is $J$-standard and $\db\in E_\theta$. Write $\db=\Res_\ell(\nu)+n\delta_\ell$, $\nu\in\CC_\ell\cap\CC_J,n\geqslant 0$.
\begin{coro}
For $\db'\in \ZM/\ell\ZM$, the following conditions are equivalent.

$(a)$ We have $\LG^\db_{\db'}\ne \emptyset$.

$(b)$ There exists a partition $\lambda\in\PC_\nu[n'\ell+|\nu|]$ for some $n'\in[0;n]$ such that we have $\db'=\Res_\ell(\Core_J(\lambda))+(n-n')\delta_\ell$.
\end{coro}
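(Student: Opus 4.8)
The plan is to read the corollary off the preceding lemma via the dictionary of Lemma~\ref{lem:descr-Eth}, which puts $E_\th$ in bijection with $(\CC_\ell\cap\CC_J)\times\ZM_{\geqslant 0}$ through the canonical presentation~\eqref{eq:d-via-core-n}. Before splitting into the two implications I would record two ambient facts. First, since $\th$ is $J$-standard, Lemma~\ref{lem:Sigma-th-simple} gives $\Sigma\Sigma_\th=\bigoplus_{j\in J}\ZM_{\geqslant 0}\alpha_j$. Second, a nonempty leaf $\LG^\db_{\db'}$ forces $\db'\in E_\th$, because the regular dimension $\rdim(M)$ of a semisimple representative $M$ is the dimension of its simple indecomposable summand in $\Rep_\th(\hQ_\ell)$; and conversely, for $\db,\db'\in E_\th$, one has $\LG^\db_{\db'}\ne\emptyset$ precisely when $\db\succcurlyeq\db'$, i.e.\ when $\db-\db'\in\bigoplus_{j\in J}\ZM_{\geqslant 0}\alpha_j=\Sigma\Sigma_\th$ (Lemma~\ref{lem:order-sym-leaves}, Lemma~\ref{lem:Sigma-th-simple}).

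For $(a)\Rightarrow(b)$ I would start from $\LG^\db_{\db'}\ne\emptyset$, hence $\db'\in E_\th$, and use Lemma~\ref{lem:descr-Eth} together with~\eqref{eq:d-via-core-n} to write $\db'=\Res_\ell(\nu')+m'\delta_\ell$ with $\nu'\in\CC_\ell\cap\CC_J$ and a unique $m'\geqslant 0$. Transporting $\LG^\db_{\db'}\ne\emptyset$ through the bijection turns it into $(\nu,n)\succcurlyeq(\nu',m')$, and the preceding lemma then supplies $n\geqslant m'$ and a partition $\lambda\in\PC_\nu[|\nu|+\ell(n-m')]$ with $\Core_J(\lambda)=\nu'$. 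Setting $n'=n-m'\in[0;n]$ gives $\lambda\in\PC_\nu[n'\ell+|\nu|]$ and $\db'=\Res_\ell(\Core_J(\lambda))+(n-n')\delta_\ell$, which is $(b)$.

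For $(b)\Rightarrow(a)$ I would deliberately avoid re-using the preceding lemma: the partition $\lambda$ produced in $(b)$ need not have $\Core_J(\lambda)\in\CC_\ell$, so that lemma does not apply verbatim. Instead I would compute the difference by hand. From $\Core_\ell(\lambda)=\nu$ and $|\lambda|=n'\ell+|\nu|$, Remark~\ref{rem:same-lcores} gives $\Res_\ell(\lambda)=\Res_\ell(\nu)+n'\delta_\ell$, whence $\db=\Res_\ell(\lambda)+(n-n')\delta_\ell$ and therefore $\db-\db'=\Res_\ell(\lambda)-\Res_\ell(\Core_J(\lambda))$. Since $\Core_J(\lambda)$ is obtained from $\lambda$ by deleting boxes whose $\ell$-residues all lie in $J$, this difference lies in $\bigoplus_{j\in J}\ZM_{\geqslant 0}\alpha_j=\Sigma\Sigma_\th$. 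To see $\db'\in E_\th$ I would invoke Lemma~\ref{lem:lcore-of-Jcore}: the partition $\rho=\Core_\ell(\Core_J(\lambda))$ lies in $\CC_\ell\cap\CC_J$, and Remark~\ref{rem:same-lcores} writes $\Res_\ell(\Core_J(\lambda))=\Res_\ell(\rho)+s\delta_\ell$ with $s\geqslant 0$, so $\db'=\Res_\ell(\rho)+(s+n-n')\delta_\ell$ has nonnegative $\delta_\ell$-coefficient and lies in $E_\th$ by Lemma~\ref{lem:descr-Eth}. Then $\db,\db'\in E_\th$ together with $\db-\db'\in\Sigma\Sigma_\th$ yields $\LG^\db_{\db'}\ne\emptyset$.

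The only delicate point is this asymmetry between the two directions: $(b)$ is stated with an \emph{arbitrary} $\lambda\in\PC_\nu$, whose $J$-core may still fail to be an $\ell$-core, so the return implication cannot be a literal citation of the preceding lemma and must go through the residue computation above, with Lemma~\ref{lem:lcore-of-Jcore} used to push the $\ell$-defect of $\Core_J(\lambda)$ into the $\delta_\ell$-part. Everything else is routine manipulation of the presentation~\eqref{eq:d-via-core-n} and of Remark~\ref{rem:same-lcores}.
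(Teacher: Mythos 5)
Your argument is correct, and its first half is the paper's: for $(a)\Rightarrow(b)$ you write $\db'$ canonically as $\Res_\ell(\nu')+m'\delta_\ell$ via Lemma \ref{lem:descr-Eth}, transport nonemptiness of $\LG^\db_{\db'}$ into $(\nu,n)\succcurlyeq(\nu',m')$ and invoke the preceding lemma, exactly as in the text. Where you genuinely diverge is $(b)\Rightarrow(a)$. The paper runs the same chain of equivalences backwards, rewriting ``$\Core_J(\lambda)=\nu'$'' as a residue condition, and thereby only treats witnesses with $n'=n-r'$, i.e.\ implicitly those $\lambda$ whose $J$-core is already an $\ell$-core; a witness $\lambda$ with $\Core_J(\lambda)\notin\CC_\ell$ (so $n'\neq n-r'$) is not addressed there, and you are right to flag this asymmetry. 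Your direct verification fills that in cleanly: the identity $\db-\db'=\Res_\ell(\lambda)-\Res_\ell(\Core_J(\lambda))\in\sum_{j\in J}\ZM_{\geqslant 0}\alpha_j=\Sigma\Sigma_\th$ (using Remark \ref{rem:same-lcores} and Lemma \ref{lem:Sigma-th-simple}), together with $\db'\in E_\th$ obtained by pushing the $\ell$-defect of $\Core_J(\lambda)$ into the $\delta_\ell$-part via Lemma \ref{lem:lcore-of-Jcore} and Lemma \ref{lem:descr-Eth}, gives nonemptiness of the leaf from the criterion ``$\db'\in E_\th$ and $\db-\db'\in\Sigma\Sigma_\th$'' without ever reducing to the canonical $n'$. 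So your route costs a short residue computation but buys an argument valid for an arbitrary witness in $(b)$, which is slightly more complete than the published proof; the only cosmetic remark is that the equivalence ``$\LG^\db_{\db'}\neq\emptyset\Leftrightarrow\db-\db'\in\Sigma\Sigma_\th$ for $\db,\db'\in E_\th$'' is most directly the paper's own observation made when the order $\succcurlyeq$ is introduced (take $M'\oplus L(\db-\db')$ for one direction), rather than a consequence of Lemma \ref{lem:order-sym-leaves} on closures, though nothing is wrong with how you use it.
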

\begin{proof}
Write $\db'=\Res_\ell(\nu')+r'\delta_\ell$. Then $\LG^\db_{\db'}\ne \emptyset$ is equivalent to $(\nu,n)\succcurlyeq (\nu',r')$. By the lemma above, this is equivalent to $n\geqslant r'$ and the existence of a partition $\lambda\in \PC_{\nu}[\ell(n-r')+|\nu|]$ such that $\Core_J(\lambda)=\nu'$. Moreover, the condition $\Core_J(\lambda)=\nu'$ is equivalent to $\Res_\ell(\Core_J(\lambda))=\Res_\ell(\nu')=\db'-r'\delta_\ell$. Now we see that $(a)$ is equivalent to $(b)$ with $n'=n-r'$.


\end{proof}

In particular, we see that the simplectic leaves of $\XC_\th(\db)$ are parametrized by $\ell$-cores of $J$-cores of elements of $\PC_\nu[n'\ell+|\nu|]$ for $n'\in [0;n]$. Note that by Lemma \ref{lem:lcore-of-Jcore}, that $\ell$-cores of $J$-cores are also $J$-cores. 

In other words, the symplectic leaves of $\XC_\th(\db)$ are paremeterized by a subset of the set $\CC_\ell\cap\CC_J$. This subset is the image of the set $\coprod_{n'=0}^n \PC_\nu[n'\ell+|\nu|]$ by the map $\Core_\ell\circ \Core_J$.

Since each couple $(n\delta_\ell,\th)$ is equivalent to some couple of the form $(\db,\th')$ such that $\th'$ is $J$-standard for some $J$ and $\db\in E_{\th'}$ (see Lemma \ref{lem:equiv-simple-nd}), the description above gives a parameterization of the symplectic leaves of an arbitrary Calogero-Moser space of type $G(\ell,1,n)$ with $a\ne 0$.

\begin{exemple}

Assume $\ell=2$. In this case the set $\CC_2$ of $2$-cores is labelled by nonnegative integers. We have $\CC_2=\{\nu_m,m\in \ZM_{\geqslant 0}\}$ where $\nu_m$ is the partition $\nu_m=(m,m-1,m-2,\ldots,2,1)$ of $m(m+1)/2$. The two possible non-trivial examples of $J$ are $J_0=\{0\}$ and $J_1=\{1\}$. Then the $2$-cores $\nu_2, \nu_4, \nu_6,\ldots$ are $J_0$-cores and not $J_1$-cores, the $2$-cores $\nu_1, \nu_3, \nu_5,\ldots$ are $J_1$-cores and not $J_0$-cores, the $2$-core $\nu_0=\emptyset$ is a $J_0$-core and a $J_1$-core.

Assume that $\th$ is $J$-standard and $\db\in E_\th$. Assume $J=J_1$ and write $\db=\Res_2(\nu_m)+n\delta_2$. Since $
\nu_m$ must be a $J_1$-core, the number $m$ must be odd or zero. Assume that $m$ is odd. 

Let us see which subset of $\CC_2\cap\CC_J$ parameterizes the symplectic leaves of $\XC_\th(\db)$ in this case. 
If $n\leqslant m+1$, then the only possible $\nu'$ that we may get is $\nu'=\nu_m$. This is the case where the variety $\XC_\th(\db)$ is smooth. If $n\geqslant m+2$ then it is also possible to get $\nu'=\nu_{m+2}$. If $n\geqslant 2(m+3)$ then it is also possible to get $\nu'=\nu_{m+4}$, etc. If $n\geqslant k(m+1+k)$ then it is also possible to get $\nu'=\nu_{m+2k}$. Finally, we see that the symplectic leaves of $\XC_\th(\db)$ are labelled by the following subset of $\CC_2\cap\CC_J$: $\{\nu_m,\nu_{m+2},\nu_{m+4},\ldots,\nu_{m+2k}\}$ where $k$ is the maximal nonnegative integer such that $n\geqslant k(m+1+k)$. 

\end{exemple}

\section*{Acknowledgements}

I would like to thank C\'edric Bonnaf\'e for many helpful discussions during my work on this paper. I would also like to thank Dario Mathi\"a for his comments.


\begin{thebibliography}{20}

\bibitem{bellamy2} {\sc G. Bellamy \& T. Schedler}, {\it Symplectic resolutions of quiver varieties}, Selecta Mathematica {\bf 27}, Article number: 36, 2021. 


\bibitem{BJV} {\sc C. Berg, B. Jones \& M. Vazirani},  
{\it A bijection on core partitions and a parabolic quotient of the affine symmetric group}, 
Journal of Combinatorial Theory, Series A, {\bf 116(8)}, 1344-1360, 2009.

\bibitem{Bon} {\sc C. Bonnaf\'e}, 
{\it Automorphisms and symplectic leaves of Calogero-Moser spaces},
preprint arXiv:2112.12405, 2021.

\bibitem{BM} {\sc C. Bonnaf\'e \& R. Maksimau}, 
{\it Fixed points in smooth Calogero-Moser spaces}, Annales de l'Institut Fourier {\bf 71(2)}, 643-678, 2021.




\bibitem{CB} {\sc W. Crawley-Boevey}, 
{\it Geometry of the moment map for representations of quivers}, 
Compo. Math. {\bf 126(3)}, 257-293, 2001.

\bibitem{CB2} {\sc W. Crawley-Boevey}, 
{\it Decomposition of Marsden-Weinstein reductions for representations of quivers}, 
Compo. Math. {\bf 130(2)}, 225-239, 2002.

\bibitem{EG} {\sc P. Etingof \& V. Ginzburg}. 
\emph{Symplectic reflection algebras, Calogero-Moser space, and deformed Harish-Chandra homomorphism}, 
Inventiones Mathematicae \textbf{147(2)}, 243-348, 2002.



\bibitem{gordon quiver} {\sc I. Gordon}, 
{\it Quiver varieties, category $\OC$ for rational Cherednik algebras, and Hecke algebras}, 
Int. Math. Res. Papers, 69 pages, 2008.


\bibitem{Kac} {\sc V. G. Kac},  {\it Infinite Dimensional Lie Algebras - 
An Introduction}, Birkh\"auser, 1983.




\bibitem{lusztig} {\sc G. Lusztig}, 
{\it Quiver varieties and Weyl group actions}, 
Ann. Inst. Fourier {\bf 50}, 461-489, 2000.


\bibitem{martino} {\sc M. Martino}, {\it Symplectic reflection algebras and Poisson geometry}, Doctoral dissertation, ProQuest Dissertations \& Theses, 2006.

\bibitem{Przez} {\sc T. Przezdziecki},  {\it The combinatorics of 
 $\mathbb {C}^* $-fixed points in generalized Calogero-Moser spaces and Hilbert schemes}, Journal of Algebra, {\bf 556}, 936-992, 2020.


\end{thebibliography}
\end{document}